\newtheorem{theorem}{Theorem}[section]
\newtheorem{lemma}[theorem]{Lemma}
\theoremstyle{definition}
\theoremstyle{remark}
\newtheorem{remark}[theorem]{Remark}
\numberwithin{equation}{section}
\begin{document}

\title[Well-posedness of Boussinesq equations]
 {On the global well-posedness of a generalized 2D Boussinesq equations}

\author[J.X.Jia]{Junxiong Jia}
\address{Department of Mathematics,
Xi'an Jiaotong University,
 Xi'an
710049, China;}
\email{jiajunxiong@163.com}
\thanks{This work was supported by the National Science Foundation of China under contracts No. 11131006 and 60970149.}

\author[J.G. Peng]{Jigen Peng}
\address{Department of Mathematics,
Xi'an Jiaotong University,
 Xi'an
710049, China;}
\email{jgpeng@mail.xjtu.edu.cn}

\author[K.X. Li]{Kexue Li}
\address{Department of Mathematics,
Xi'an Jiaotong University,
 Xi'an
710049, China;}
\email{kexueli@gmail.com}


\subjclass[2010]{76D03, 35S10}



\keywords{Generalized 2D Boussinesq equation, Global regularity, Supercritical Boussinesq equations, Regularization effect}

\begin{abstract}
In this paper, we consider the global solutions to a generalized 2D Boussinesq equation
\begin{align*}
\left \{\begin{aligned}
& \partial_{t} \omega + u\cdot \nabla \omega + \nu \Lambda^{\alpha} \omega = \theta_{x_{1}} ,  \quad \\
& u = \nabla^{\bot} \psi = (-\partial_{x_{2}} , \partial_{x_{1}}) \psi , \quad \Delta \psi = \Lambda^{\sigma} (\log (I-\Delta))^{\gamma} \omega ,  \quad \\
& \partial_{t} \theta + u\cdot \nabla \theta + \kappa \Lambda^{\beta} \theta = 0, \quad \\
& \omega(x,0) = \omega_{0}(x) , \quad \theta(x,0) = \theta_{0}(x),
\end{aligned}\right.
\end{align*}
with $\sigma \geq 0$, $\gamma \geq 0$, $\nu >0$, $\kappa>0$, $\alpha < 1$ and $\beta < 1$. When $\sigma = 0$,
$\gamma \geq 0$, $\alpha \in [0.95,1)$ and $\beta \in (1-\alpha,g(\alpha))$,
where $g(\alpha)<1$ is an explicit function as a technical bound, we prove that the above
equation has a global and unique solution in suitable functional space.
\end{abstract}

\maketitle


\section{Introduction}

The aim of this paper is prove that the following generalized 2D Boussinesq equation has a global solution in suitable functional settings.
\begin{align}
\label{GB}
\left \{\begin{aligned}
& \partial_{t} \omega + u\cdot \nabla \omega + \nu \Lambda^{\alpha} \omega = \theta_{x_{1}} ,  \quad \\
& u = \nabla^{\bot} \psi = (-\partial_{x_{2}} , \partial_{x_{1}}) \psi , \quad \Delta \psi = \Lambda^{\sigma} (\log (I-\Delta))^{\gamma} \omega ,  \quad \\
& \partial_{t} \theta + u\cdot \nabla \theta + \kappa \Lambda^{\beta} \theta = 0, \quad \\
& \omega(x,0) = \omega_{0}(x) , \quad \theta(x,0) = \theta_{0}(x),
\end{aligned}\right.
\end{align}
where $\omega = \omega (x,t)$, $\psi = \psi (x,t)$ and $\theta = \theta (x,t)$ are scalar functions of $x \in \mathbb{R}^{2}$
and $t \geq 0$, $u = u(x,t) \, : \, \mathbb{R}^{2} \rightarrow \mathbb{R}^{2} $ is a vector field, $0<\alpha<1$,$0<\beta<1$,$\sigma \geq 0$ and $\gamma \geq 0$ are real parameters, and $\Lambda = (-\Delta)^{\frac{1}{2}}$ and $\Lambda^{\sigma}$ are Fourier
multiplier operators with
\begin{align*}
\widehat{\Lambda^{\sigma}} f(\xi) = |\xi|^{\sigma} \widehat{f}(\xi).
\end{align*}
This generalized 2D Boussinesq equation proposed in \cite{T2BEWLSV} firstly.
In \cite{T2BEWLSV}, D. Chae and J. Wu proved that the above vorticity equation does have the velocity formation as follows
\begin{align}\label{velocity}
\left \{\begin{aligned}
& \partial_{t} v + u^{\bot}(\nabla^{\bot}\cdot v) + \nu \Lambda^{\alpha} v = -\nabla p + \theta e_{2}, \quad \\
& \nabla \cdot v = 0 , \quad u = \Lambda^{\sigma} (\log (I-\Delta))^{\gamma} v, \quad \\
& \partial_{t} \theta + u\cdot \nabla \theta + \kappa \Lambda^{\beta} \theta = 0, \quad \\
& v(x,0) = v_{0}(x), \quad \theta(x,t) = \theta_{0}(x).
\end{aligned}\right.
\end{align}
Obviously, the above model can be seen as a generalization of the 2D Boussinesq equations.

From physical view, Boussinesq type equations model the oceanic and atmospheric motions \cite{GFD}.
From the mathematical view, the fully viscos model with $\nu > 0$, $\kappa >0$, $\alpha = \beta =2$ is the simplest one to study.
And the most difficult one for the mathematical study is the inviscid model, that is when $\nu = \kappa = 0$.
In addition, the 2D Boussinesq equations acts very similar to the 3D Euler and Navier-Stokes equations, so it is hoped that the study of the 2D Boussinesq equations may shed light on the global regularity problem concerning the 3D Euler and Navier-Stokes equations.

Now, there are numerous studies about 2D Boussinesq equations. In 2006, D. Chae proved the global in time regularity for the
2D Boussinesq system with either the zero diffusivity or the zero viscosity in \cite{GRFT2BEWPVT}. In 2010, further progress has been
made by Hmidi et al., who proved the global regularity when the full Laplacian dissipation is replaced by the critical dissipation represented in terms of $\sqrt{-\Delta}$ \cite{GWPFABNSSWCD,GWPFEBSWCD}. Recently, Miao and Xue generalized the results to
accommodate both fractional dissipation and fractional thermal diffusion \cite{OTGWPOACOBNSS}. Some results
about 3D case have been obtained in \cite{GWPFTEBSWAD} by Hmidi et at. At the same time, some
other generalized models have been considered. M. Lai, R. Pan and K. Zhao studied the initial boundary value problem of 2D Boussinesq
equations over a bounded domain with smooth boundary \cite{IBVPFTDVBE}.
In 2011, C. Wang and Z. Zhang discussed the global well-posedness
for the 2D Boussinesq system with the temperature-dependent viscosity and thermal diffusivity \cite{GWPFT2BSWTTDVATD}.
In 2012, G. Wu and L. Xue showed that there is a global unique solution to the two-dimensional inviscid B$\acute{e}$nard system with fractional diffusivity system \cite{GWPFT2IBSWFDAYTD}.

We note that D. Chae and J. Wu only studied system (\ref{GB}) in the case $\nu >0$, $\kappa =0 $, $\alpha =0$ \cite{T2BEWLSV}.
Concerning the other cases, can we get similar results as in 2D Boussinesq system for system (\ref{GB}) which is a meaningful generalization. In this paper, we focus on the case $\nu >0$, $\kappa >0$, $0<\alpha<1$ and $0<\beta<1$.
Obviously, $\alpha$ and $\beta$ should satisfy the relation $\alpha + \beta \geq 1$, for the maximal gain of $\alpha$ derivative from the dissipation term should at least  roughly compensate the loss of one derivative in $\theta$ in the vorticity equation of system (\ref{GB}) with the help of the diffusion effect in the temperature equation. For brevity, we always set $\nu = \kappa = 1$
in the following. We shall adopt the subtle method used in \cite{T2BEWLSV,GWPFABNSSWCD,GWPFEBSWCD,OTGWPOACOBNSS} to study the coupled
effects of the generalized system. More precisely, we have the following result.

\begin{theorem}\label{global result}

Consider the generalized Boussinesq equations (\ref{GB}) with $\sigma = 0$ and $\gamma \geq 0$.
Let $\alpha \in [\frac{19}{20},1)$, $\beta \in (1- \alpha, g(\alpha))$ with
$g(\alpha) := \min \{ 2-2\alpha, \frac{8}{3} \, \alpha -2, $ $\frac{5\alpha (1-\alpha)}{11-10\alpha} \}$.
Assume the initial data $(\omega_{0},\theta_{0})$ satisfies $\omega_{0} \in L^{2}\cap L^{p}$ for any
$p > 2$ and $\theta_{0} \in H^{1-\alpha} \cap B_{\infty,1}^{1-\alpha+\epsilon}$
for arbitrary small $\epsilon >0$. Then (\ref{GB}) has a unique
global solution $(\omega,\theta)$ satisfying for any $t > 0$,
\begin{align*}
& \omega \in L_{t}^{\infty}L^{2} \cap L_{t}^{\infty}L^{p} \cap L_{t}^{1}B_{\infty,1}^{0,\gamma} \\
& \theta \in L_{t}^{\infty}(H^{1-\alpha} \cap B_{\infty,1}^{1-\alpha+\epsilon}) \cap
L_{t}^{1}(H^{1-\alpha+\beta} \cap B_{\infty,1}^{1-\alpha+\beta+\epsilon}).
\end{align*}
\end{theorem}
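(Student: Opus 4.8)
The plan is the usual two steps: a local well-posedness result in the stated class (a routine fixed-point/Friedrichs argument, using that with $\sigma=0$ the modified Biot--Savart law $u=\nabla^\perp\Delta^{-1}(\log(I-\Delta))^\gamma\omega$ loses only a logarithm over $\omega$), followed by global-in-time a priori bounds for $\|\omega\|_{L^\infty_t(L^2\cap L^p)}$, $\|\omega\|_{L^1_tB^{0,\gamma}_{\infty,1}}$ and the temperature norms, which by a continuation argument upgrade the local solution to a global one; uniqueness is then obtained by an energy estimate for the difference of two solutions in a space of low regularity. The structural device, borrowed from the critical-dissipation Boussinesq theory, is the modified vorticity $G:=\omega-\mathcal{R}\theta$ with $\mathcal{R}:=\partial_{x_1}\Lambda^{-\alpha}$. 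Since $\Lambda^\alpha\mathcal{R}\theta=\theta_{x_1}$, the bad source term disappears and
\begin{equation*}
\partial_tG+u\cdot\nabla G+\Lambda^\alpha G=[\mathcal{R},u\cdot\nabla]\theta+\partial_{x_1}\Lambda^{\beta-\alpha}\theta ,
\end{equation*}
whose right-hand side consists of a commutator and a term of order $1+\beta-\alpha$, both small when $\alpha$ is close to $1$; moreover $G_0=\omega_0-\mathcal{R}\theta_0\in L^2\cap L^p$ since $\mathcal{R}$ has order $1-\alpha>0$ and $\theta_0\in H^{1-\alpha}\cap B^{1-\alpha+\epsilon}_{\infty,1}$.

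First I would record the cheap a priori estimates. The divergence-free transport together with the positivity of $\Lambda^\beta$ (a C\'ordoba--C\'ordoba type inequality) gives $\|\theta(t)\|_{L^q}\le\|\theta_0\|_{L^q}$ for all $q\in[2,\infty]$. An $L^2$ and then an $L^p$ energy estimate on the $G$-equation — using a Hmidi--Keraani--Rousset type commutator bound for $[\mathcal{R},u\cdot\nabla]$, the relation $\nabla u\sim(\log(I-\Delta))^\gamma R\,\omega$ with $R$ Riesz transforms, and the smoothing $\theta\in L^2_tH^{1-\alpha+\beta}$ coming from the temperature equation to absorb $\partial_{x_1}\Lambda^{\beta-\alpha}\theta$ — controls $\|\omega\|_{L^\infty_t(L^2\cap L^p)}$ in terms of $\exp(C\int_0^t\|\nabla u\|_{L^\infty})$ and the temperature norms. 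Parallel Chemin-type transport--diffusion estimates for $\theta$ yield $\theta\in L^\infty_t(H^{1-\alpha}\cap B^{1-\alpha+\epsilon}_{\infty,1})\cap L^1_t(H^{1-\alpha+\beta}\cap B^{1-\alpha+\beta+\epsilon}_{\infty,1})$, again with a factor $\exp(C\int_0^t\|\nabla u\|_{L^\infty})$.

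The core of the proof is to close the loop by bounding $\int_0^T\|\nabla u\|_{L^\infty}\,dt\lesssim\|\omega\|_{L^1_TB^{0,\gamma}_{\infty,1}}$. Writing $\omega=G+\mathcal{R}\theta$, the piece $\mathcal{R}\theta$ lies in $L^1_TB^{\beta+\epsilon}_{\infty,1}\hookrightarrow L^1_TB^{0,\gamma}_{\infty,1}$ because $\mathcal{R}$ has order $1-\alpha$ and $\theta\in L^1_TB^{1-\alpha+\beta+\epsilon}_{\infty,1}$. For $G$ I would invoke the gain-of-regularity estimate for the fractional dissipative transport equation: frequency-localising, $\int_0^T\|\Delta_jG\|_{L^\infty}\,dt\lesssim 2^{-j\alpha}(\|\Delta_jG_0\|_{L^\infty}+\|\Delta_jF\|_{L^1_TL^\infty})$ up to a factor $\exp(C\|\nabla u\|_{L^1_TL^\infty})$, and then sum against the logarithmic weights defining $B^{0,\gamma}_{\infty,1}$; the geometric factor $2^{-j\alpha}$ with $\alpha$ near $1$ dominates both the high-frequency growth $2^{2j/p}$ of $G_0\in L^p$ (for $p$ large, as allowed) and the log weight, while $F=[\mathcal{R},u\cdot\nabla]\theta+\partial_{x_1}\Lambda^{\beta-\alpha}\theta$ has the Besov regularity needed for this sum to converge. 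One then closes the nonlinear feedback by a bootstrap/continuity argument, propagating the estimates over a partition of $[0,T]$ into subintervals on which the relevant time integrals are small by absolute continuity; this yields bounds uniform on any finite interval.

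The main obstacle is exactly this last step: the vorticity $L^p$ bound, the two temperature smoothing estimates, and the $B^{0,\gamma}_{\infty,1}$-bound on $\omega$ are mutually coupled, each entering the others through $\exp(C\|\nabla u\|_{L^1_tL^\infty})$ while $\|\nabla u\|_{L^1_tL^\infty}$ is controlled only via these norms. Carrying the exact Sobolev/Besov exponents through the commutator estimate for $[\partial_{x_1}\Lambda^{-\alpha},u\cdot\nabla]$, the $L^p$-propagation of vorticity, the temperature gain $\Lambda^\beta$, and the dissipative smoothing of $G$, and requiring all of these to be simultaneously admissible, is what forces the technical window $\alpha\in[\tfrac{19}{20},1)$, $\beta\in(1-\alpha,g(\alpha))$ with the explicit $g(\alpha)=\min\{2-2\alpha,\tfrac83\alpha-2,\tfrac{5\alpha(1-\alpha)}{11-10\alpha}\}$; the remaining arguments are comparatively routine.
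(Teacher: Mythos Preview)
Your proposal correctly identifies the structural device $G=\omega-\mathcal{R}_\alpha\theta$ and its equation, but the closure you describe is genuinely circular in a way that does not obviously resolve, and it is \emph{not} how the paper proceeds. You put every estimate --- $\|G\|_{L^p}$, the $\theta$-smoothing, and $\|G\|_{L^1_tB^{0,\gamma}_{\infty,1}}$ --- behind a factor $\exp(C\int_0^t\|\nabla u\|_{L^\infty})$, and then hope to break the resulting exponential feedback by a subinterval continuity argument. With exponential coupling and no a priori smallness, that scheme does not close: on each new subinterval the ``data'' norms (e.g.\ $\|G(T_0)\|_{L^p}$) have already grown by the previous exponential factor, so the interval length shrinks and you never reach an arbitrary $T$. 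Moreover, you invoke $\theta\in L^1_TB^{1-\alpha+\beta+\epsilon}_{\infty,1}$ to control $\mathcal{R}_\alpha\theta$ in $B^{0,\gamma}_{\infty,1}$, but that $\theta$-bound itself requires $\|\nabla u\|_{L^1_tL^\infty}$, which is exactly what you are trying to bound.

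The paper avoids this circularity altogether. The key missing idea is a commutator estimate (Theorem~\ref{crucial commutators}(2)) that plugs the Biot--Savart law $u=\nabla^\perp\Delta^{-1}(\log(I-\Delta))^\gamma(G+\mathcal{R}_\alpha\theta)$ back into $[\mathcal{R}_\alpha,u]\theta$ and bounds it in $H^s$ by $\|G\|_{L^2}$ and elementary $\theta$-norms only --- no $\|\nabla u\|_{L^\infty}$ anywhere. This closes $\|G\|_{L^2}$ by a direct Gronwall. One then bootstraps to $\|G\|_{L^q}$, but only for $q$ in the narrow window $[2,\tfrac{20}{9})$; the $L^p$ bound for general $p$ that you assume is \emph{not} available at this stage. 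Next $\|\omega\|_{L^1_tL^q}$ is obtained by a time-iteration that uses only $\|\theta(t)\|_{L^q}\le\|\theta_0\|_{L^q}$, again without $\|\nabla u\|_{L^\infty}$. Only after $\|G\|_{L^1_tB^s_{q,1}}$ (with $s\ge 2/q$, needed for the embedding into $B^{0,\gamma}_{\infty,1}$) does one finally get $\|\nabla u\|_{L^1_tL^\infty}$, and \emph{then} the $\theta$-smoothing and the general $L^p$ bound on $\omega$ follow. The constraints $s\ge 2/q$ and $s<2\alpha-1$ with $q<\tfrac{20}{9}$ are precisely what force $\alpha\ge\tfrac{19}{20}$; your sketch, which never isolates this restricted $q$-range, cannot explain where that threshold comes from.
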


For the definitions of Besov space $B_{p,q}^{s}$, generalized Besov space $B_{p,q}^{s,\gamma}$ and mixed space-time Besov space see
the next section below. Now, we should give some comments.

\begin{remark}
We know that the case for $\alpha < 1$, $\beta < 1$ and $\alpha + \beta \geq 1$ is nontrivial.
Until now there is no effective way to treat this case, for the regularization from the fractional diffusion
term  not strong enough. So we have to exploit the structure of the system to overcome the difficulty.
In this paper, the method is workable but very restrictive.
\end{remark}

\begin{remark}
In our theorem, we need $\beta$ smaller than a very complicated function.
It is a technical assumption.
In common sense, the term $\Lambda^{\beta}$ is a good term when $\beta$ is large.
So we can gauss that
the result in Theorem \ref{global result} is hold
 for $\alpha \in [\frac{19}{20},1)$, $\beta \in (1-\alpha,1)$.
\end{remark}

To prove Theorem \ref{global result}, there are two main difficulties. Firstly, following the procedure as in \cite{T2BEWLSV},
we will encounter the operator like $\mathcal{R}_{\alpha} = \Lambda^{-\alpha} \partial_{1}$ which is different from Riesz Transform and is not a bounded operator in $L^{p}$ space. So the technique used in \cite{T2BEWLSV} can not be used here without significant changes.
On the other hand, considering the structure of the system (\ref{velocity}), we hardily obtain the $L^{2}$ estimates of $v$.
Hence, the techniques used in \cite{OTGWPOACOBNSS} also need lots of nontrivial changes.

The paper is organized as follows. In Section 2, we list some useful results about Besov space and some estimates which will be used in our
proof. Section 3 is devoted to prove some priori estimates which are the main part of this paper. In Section 4, we give the proof of the uniqueness part of Theorem \ref{global result}. Finally, some technical lemmas are shown in Section 5.

\section{Preliminaries}

Throughout this paper we will use the following notations.
\begin{itemize}
  \item For any tempered distribution $u$ both $\widehat{u}$ and $\mathcal{F}u$ denote the Fourier transform of $u$.
  \item For every $p\in [1,\infty]$, $\|\cdot\|_{L^{p}}$ denotes the norm in the Lebesgue space $L^{p}$.
  \item The norm in the mixed space-time Lebesgue space $L^{p}([0,T];L^{r}(\mathbb{R}^{d}))$ is denoted by $\|\cdot\|_{L^{p}_{T}L^{r}}$ (with the obvious generalization to $\|\cdot\|_{L^{p}_{T}X}$ for any normed space X).
  \item For any pair of operators $P$ and $Q$ on some Banach space $X$, the commutator $[P,Q]$ is given by $PQ-QP$.
\end{itemize}

Then, we give a short introduction to the Besov type space. Details about Besov type space can be found in \cite{FANPDE} or \cite{PIF}.
There exist two radial positive functions $\chi \in \mathcal{D}(\mathbb{R}^{d})$ and $\varphi \in \mathcal{D}(\mathbb{R}^{d}\backslash \{0\})$
such that
\begin{itemize}
  \item $\chi (\xi) + \sum_{q\geq 0} \varphi (2^{-q} \xi) =1$; $\forall q \geq 1$,
        $\text{supp} \chi \cap \text{supp} \varphi (2^{-q}\cdot) = \phi$,
  \item $\text{supp} \varphi (2^{-j}\cdot) \cap \text{supp} \varphi (2^{-k}\cdot) = \phi$, if $|j-k|\geq 2$,
\end{itemize}

For every $v \in S^{'}(\mathbb{R}^{d})$ we set
\begin{align*}
\Delta_{-1}v = \chi(D)v, \quad \forall q \in \mathbb{N}, \quad \Delta_{j}v = \varphi (2^{-q}D)v \quad \text{and} \quad
S_{j} = \sum_{-1\leq m\leq j-1} \Delta_{m}.
\end{align*}
The homogeneous operators are defined by
\begin{align*}
\dot{\Delta}_{q}v = \varphi (2^{-q}D)v, \quad \dot{S}_{j}=\sum_{m\leq j-1} \dot{\Delta}_{j}v, \quad \forall q \in \mathbb{Z}.
\end{align*}
One can easily verifies that with our choice of $\varphi$,
\begin{align}\label{dd2}
\Delta_{j}\Delta_{k}f=0\quad \text{if} \quad |j-k|\geq 2
\end{align}
\begin{align}\label{ds4}
\Delta_{j}(S_{k-1}f \Delta_{k}f)=0\quad \text{if} \quad |j-k|\geq 5.
\end{align}
As in Bony's decomposition, we split the product $uv$ into three parts
\begin{align*}
uv=T_{u}v + T_{v}u +R(u,v),
\end{align*}
with
\begin{align*}
T_{u}v=\sum_{j}S_{j-1}u \Delta_{j}v,
\end{align*}
\begin{align*}
R(u,v)=\sum_{j} \Delta_{j}u \widetilde{\Delta}_{j}v
\end{align*}
where $\widetilde{\Delta}_{j} = \Delta_{j-1} + \Delta_{j} + \Delta_{j+1}$.

Let us now define inhomogeneous Besov spaces. For $(p,q)\in [1, +\infty]^{2}$ and $s\in \mathbb{R}$ we define the inhomogeneous
Besov space $B_{p,q}^{s}$ as the set of tempered distributions $u$ such that
\begin{align*}
\|u\|_{B_{p,q}^{s}}:=(2^{js}\|\Delta_{j}u\|_{L^{p}})_{\ell^{q}} < +\infty.
\end{align*}
The homogeneous Besov space $\dot{B}_{p,q}^{s}$ is defined as the set of $u \in S^{'}(\mathbb{R}^{d})$ up to polynomials such that
\begin{align*}
\|u\|_{\dot{B}_{p,q}^{s}}:=(2^{js}\|\dot{\Delta}_{j}u\|_{L^{p}})_{\ell^{q}} < +\infty.
\end{align*}
Notice that the usual Sobolev spaces $H^{s}$ coincide with $B_{2,2}^{s}$ for every $s \in \mathbb{R}$ and that the homogeneous
spaces $\dot{H}^{s}$ coincide with $\dot{B}_{2,2}^{s}$.

For $s \in (0,1)$ and $1 \leq p,q \leq \infty$, we can define Besov spaces equivalently as follows
\begin{align}
& \|u\|_{\dot{B}_{p,q}^{s}} = \left( \int_{\mathbb{R}^{d}} \frac{(\|u(x+t)-u(x)\|_{L^{p}})^{q}}{|t|^{d+sq}} \, dt \right)^{1/q}, \\
& \|u\|_{B_{p,q}^{s}} = \|u\|_{L^{p}} + \left( \int_{\mathbb{R}^{d}} \frac{(\|u(x+t)-u(x)\|_{L^{p}})^{q}}{|t|^{d+sq}} \, dt \right)^{1/q}.
\end{align}
When $q=\infty$, the expressions are interpreted in the normal way.

We shall need some mixed space-time spaces. Let $T>0$ and $\rho \geq 1$, we denote by $L^{\rho}_{T}B_{p,q}^{s}$ the space of
distribution $u$ such that
\begin{align*}
\|u\|_{L^{\rho}_{T}\dot{B}_{p,q}^{s}}:=\|(2^{js}\|\dot{\Delta}_{j}u\|_{L^{p}})_{\ell^{q}}\|_{L^{\rho}_{T}} <+\infty.
\end{align*}
We say that $u$ belongs to the space $\widetilde{L}_{T}^{\rho}B_{p,q}^{s}$ if
\begin{align*}
\|u\|_{\widetilde{L}^{\rho}_{T}\dot{B}_{p,q}^{s}}:=(2^{js}\|\dot{\Delta}_{j}u\|_{L^{\rho}_{T}L^{p}})_{\ell^{q}} <+\infty.
\end{align*}
Through a direct application of the Minkowski inequality, the following links between these spaces is true \cite{T.Hmidi2010JDE}.
Let $\varepsilon >0$, then
\begin{align*}
L^{\rho}_{T}B_{p,q}^{s} \hookrightarrow \widetilde{L}^{\rho}_{T}B_{p,q}^{s} \hookrightarrow L^{\rho}_{T}B_{p,q}^{s-\varepsilon},
\quad \text{if} \,\, q\geq \rho,
\end{align*}
\begin{align*}
L^{\rho}_{T}B_{p,q}^{s+\varepsilon} \hookrightarrow \widetilde{L}^{\rho}_{T}B_{p,q}^{s} \hookrightarrow
L^{\rho}_{T}B_{p,q}^{s}, \quad \text{if} \,\, \rho \geq q.
\end{align*}

Then, we give the definition of a generalized Besov spaces which include an algebraic part of the modes.
For $s,\gamma \in \mathbb{R}$ and $1 \leq p,q \leq \infty$, the generalized Besov spaces $\dot{B}_{p,q}^{s,\gamma}$ and
$B_{p,q}^{s,\gamma}$ are defined by
\begin{align*}
& \|u\|_{\dot{B}_{p,q}^{s,\gamma}} := \|2^{js}(1+|j|)^{\gamma}\|\dot{\Delta}_{j}u\|_{L^{p}}\|_{\ell^{q}} < \infty, \\
& \|u\|_{\dot{B}_{p,q}^{s,\gamma}} := \|2^{js}(1+|j|)^{\gamma}\|\Delta_{j}u\|_{L^{p}}\|_{\ell^{q}} < \infty.
\end{align*}
The space $L_{t}^{\rho}B_{p,q}^{s,\gamma}$, $L_{t}^{\rho}\dot{B}_{p,q}^{s,\gamma}$, $\widetilde{L}_{t}^{\rho}B_{p,q}^{s,\gamma}$ and
$\widetilde{L}_{t}^{\rho}\dot{B}_{p,q}^{s,\gamma}$ are defined similar as in Besov space.

Bernstein type inequalities for fractional derivatives and Osgood inequality are often used in our proof.
For reader's convenience, we list them here.

\begin{lemma}\label{bernstein}
Let $\alpha \geq 0$. Let $1 \leq p \leq q \leq \infty$.

(1)If $u$ satisfies
\begin{align*}
\sup \hat{u} \subset \left\{ \xi \in \mathbb{R}^{d} \, : \, |\xi|\leq K2^{j} \right\},
\end{align*}
for some integer $j$ and a constant $K > 0$, then
\begin{align*}
\|(-\Delta)^{\alpha}u\|_{L^{q}(\mathbb{R}^{d})} \leq C_{1} 2^{2\alpha j + jd(\frac{1}{p} - \frac{1}{q})} \|u\|_{L^{p}(\mathbb{R}^{d})}.
\end{align*}

(2)If $u$ satisfies
\begin{align*}
\sup \widehat{u} \subset \left\{ \xi \in \mathbb{R}^{d} \, : \, K_{1}2^{j} \leq |\xi| \leq K_{2}2^{j} \right\}
\end{align*}
for some integer $j$ and constants $0 < K_{1} \leq K_{2}$, then
\begin{align*}
C_{1} 2^{2 \alpha j}\|u\|_{L^{q}(\mathbb{R}^{d})} \leq \|(-\Delta)^{\alpha}u\|_{L^{q}(\mathbb{R}^{d})} \leq
C_{2}2^{2\alpha j+jd(\frac{1}{p}-\frac{1}{q})}\|u\|_{L^{p}(\mathbb{R}^{d})}
\end{align*}
where $C_{1}$ and $C_{2}$ are constants depending on $\alpha$, $p$ and $q$ only.
\end{lemma}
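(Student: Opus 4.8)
The plan is to realize $(-\Delta)^{\alpha}$ as convolution against a rescaled kernel and then apply Young's convolution inequality, reading off the powers of $2^{j}$ from a clean scaling identity. First, for part (1), I would fix a function $\phi \in \mathcal{D}(\mathbb{R}^{d})$ with $\phi \equiv 1$ on $\{|\xi| \leq K\}$ and $\mathrm{supp}\,\phi \subset \{|\xi| \leq 2K\}$. Since $\widehat{u}$ is supported in $\{|\xi| \leq K 2^{j}\}$, we have $\phi(2^{-j}\xi) \equiv 1$ there, so
\[
\widehat{(-\Delta)^{\alpha}u}(\xi) = |\xi|^{2\alpha}\widehat{u}(\xi) = m_{j}(\xi)\,\widehat{u}(\xi), \qquad m_{j}(\xi) := |\xi|^{2\alpha}\phi(2^{-j}\xi).
\]
Hence $(-\Delta)^{\alpha}u = K_{j} * u$ with $K_{j} = \mathcal{F}^{-1}m_{j}$. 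The homogeneity of $|\xi|^{2\alpha}$ yields the scaling $m_{j}(\xi) = 2^{2\alpha j}m_{0}(2^{-j}\xi)$ with $m_{0}(\eta) = |\eta|^{2\alpha}\phi(\eta)$, and therefore $K_{j}(x) = 2^{(2\alpha + d)j}K_{0}(2^{j} x)$ with $K_{0} = \mathcal{F}^{-1}m_{0}$.

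Second, I would apply Young's inequality $\|K_{j} * u\|_{L^{q}} \leq \|K_{j}\|_{L^{r}}\|u\|_{L^{p}}$ with $\frac{1}{r} = 1 + \frac{1}{q} - \frac{1}{p} \in [0,1]$, which is exactly where the hypothesis $p \leq q$ is used. A change of variables $y = 2^{j}x$ gives $\|K_{j}\|_{L^{r}} = 2^{2\alpha j + jd(1 - 1/r)}\|K_{0}\|_{L^{r}} = 2^{2\alpha j + jd(1/p - 1/q)}\|K_{0}\|_{L^{r}}$, which is (1) with $C_{1} = \|K_{0}\|_{L^{r}}$. The upper bound in part (2) is then immediate, since the annulus $\{K_{1} 2^{j} \leq |\xi| \leq K_{2} 2^{j}\}$ lies inside the ball $\{|\xi| \leq K_{2} 2^{j}\}$. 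For the lower bound in (2), I would invert $|\xi|^{2\alpha}$: choosing a cutoff $\widetilde{\phi} \in \mathcal{D}(\mathbb{R}^{d}\setminus\{0\})$ equal to $1$ on $\{K_{1} \leq |\eta| \leq K_{2}\}$ and supported away from the origin, one writes $u = G_{j} * (-\Delta)^{\alpha}u$ with $G_{j} = \mathcal{F}^{-1}n_{j}$, $n_{j}(\xi) = |\xi|^{-2\alpha}\widetilde{\phi}(2^{-j}\xi)$. The same scaling gives $G_{j}(x) = 2^{(d - 2\alpha)j}G_{0}(2^{j}x)$, so Young's inequality with an $L^{1}$ kernel yields
\[
\|u\|_{L^{q}} \leq \|G_{j}\|_{L^{1}}\|(-\Delta)^{\alpha}u\|_{L^{q}} = 2^{-2\alpha j}\|G_{0}\|_{L^{1}}\|(-\Delta)^{\alpha}u\|_{L^{q}},
\]
i.e. the stated lower bound with $C_{1} = \|G_{0}\|_{L^{1}}^{-1}$.

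The main point to verify, and the only place where any real care is needed, is the finiteness of the kernel norms $\|K_{0}\|_{L^{r}}$ and $\|G_{0}\|_{L^{1}}$. For the annulus multiplier $n_{0} = |\eta|^{-2\alpha}\widetilde{\phi}(\eta)$ this is painless: since $\widetilde{\phi}$ is supported away from the origin, $n_{0}$ is smooth and compactly supported, so $G_{0} = \mathcal{F}^{-1}n_{0}$ is a Schwartz function, in particular in $L^{1}$. The ball case is slightly more delicate because $m_{0} = |\eta|^{2\alpha}\phi(\eta)$ fails to be smooth at the origin when $2\alpha$ is not an even integer; nonetheless $m_{0}$ is compactly supported and continuous, so $K_{0}$ is bounded and band-limited, and its decay at infinity (governed by the order of the singularity of $|\eta|^{2\alpha}$ at the origin) is more than enough to place $K_{0} \in L^{r}$ for every $r \in [1,\infty]$ arising above. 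Collecting the two displays for $\|K_{j}\|_{L^{r}}$ and $\|G_{j}\|_{L^{1}}$ then completes the proof, with the constants depending only on $\alpha$, $p$, $q$ through the fixed choices of $\phi$ and $\widetilde{\phi}$, as claimed.
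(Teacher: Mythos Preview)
Your argument is correct and is the standard route to Bernstein's inequalities: realize the multiplier as convolution against a rescaled Schwartz-type kernel, read off the powers of $2^{j}$ from the dilation, and close with Young's inequality. The paper itself does not prove this lemma; it is simply recorded in the preliminaries as a known tool (``For reader's convenience, we list them here''), so there is no competing argument to compare against.

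One small remark on the only step you flag as delicate: the claim that $K_{0}=\mathcal{F}^{-1}\bigl(|\eta|^{2\alpha}\phi(\eta)\bigr)\in L^{1}$ is true but your justification is slightly impressionistic. A clean way to pin it down is to observe that $|\eta|^{2\alpha}\phi(\eta)$ belongs to $H^{s}(\mathbb{R}^{d})$ for every $s<2\alpha+\tfrac{d}{2}$ (the obstruction is the behavior of $\partial^{\beta}|\eta|^{2\alpha}\sim|\eta|^{2\alpha-|\beta|}$ near the origin); choosing $s$ with $\tfrac{d}{2}<s<2\alpha+\tfrac{d}{2}$, which is possible for $\alpha>0$, gives $(1+|x|^{2})^{s/2}K_{0}\in L^{2}$, and then Cauchy--Schwarz yields $K_{0}\in L^{1}$. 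The case $\alpha=0$ is trivial since then $m_{0}=\phi$ is Schwartz. With this in hand, $K_{0}\in L^{r}$ for all $r\in[1,\infty]$ follows from boundedness (since $m_{0}\in L^{1}$) and interpolation, exactly as you say. The lower-bound kernel $G_{0}$ requires no such care, as you correctly note, because the annular cutoff makes $n_{0}$ genuinely smooth and compactly supported.
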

\begin{lemma}\label{osgood}
Let $\alpha(t)>0$ be a locally integrable function. Assume $\omega (t)\geq 0$ satisfies
\begin{align*}
\int_{0}^{\infty}\frac{1}{\omega (r)} \, dr = \infty .
\end{align*}
Suppose that $\rho (t)>0$ satisfies
\begin{align*}
\rho (t) \leq a + \int_{t_{0}}^{t} \alpha(s)\omega (\rho (s)) \, ds
\end{align*}
for some constant $a\geq 0$. Then if $a=0$, then $\rho = 0$; if $a>0$, then
\begin{align*}
-\Omega(\rho(t))+\Omega(a)\leq \int_{t_{0}}^{t}\alpha(\tau) \, d\tau,
\end{align*}
where
\begin{align*}
\Omega(x) = \int_{x}^{1} \frac{1}{\omega(r)} \, dr.
\end{align*}
\end{lemma}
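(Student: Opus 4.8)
\emph{Proof proposal.} The plan is to reduce the integral inequality to a differential inequality for an explicit majorant and then integrate it, treating the cases $a>0$ and $a=0$ separately. Throughout I use that $\omega$ is a positive nondecreasing function (as is standard for a modulus-of-continuity factor), so that $\Omega(x)=\int_x^1\frac{dr}{\omega(r)}$ is a well-defined, strictly decreasing $C^1$ function on $(0,\infty)$ with $\Omega'(x)=-1/\omega(x)$, and so that the divergence hypothesis forces $\Omega(\epsilon)\to+\infty$ as $\epsilon\to 0^{+}$.

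First I would treat the case $a>0$. Set $R(t):=a+\int_{t_0}^{t}\alpha(s)\,\omega(\rho(s))\,ds$. Then $R$ is absolutely continuous, $R(t_0)=a>0$, $R$ is nondecreasing (since $\alpha>0$ and $\omega\geq 0$), and by hypothesis $\rho(t)\leq R(t)$. Because $\omega$ is nondecreasing, $\omega(\rho(t))\leq\omega(R(t))$, whence for a.e.\ $t$ one has $R'(t)=\alpha(t)\,\omega(\rho(t))\leq\alpha(t)\,\omega(R(t))$. Dividing by $\omega(R(t))>0$ and using the chain rule $\frac{d}{dt}\bigl[-\Omega(R(t))\bigr]=R'(t)/\omega(R(t))$, I obtain $\frac{d}{dt}\bigl[-\Omega(R(t))\bigr]\leq\alpha(t)$ a.e. Integrating from $t_0$ to $t$ gives $-\Omega(R(t))+\Omega(a)\leq\int_{t_0}^{t}\alpha(s)\,ds$. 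Finally, $\rho(t)\leq R(t)$ together with the monotonicity of $\Omega$ yields $-\Omega(\rho(t))\leq-\Omega(R(t))$, which produces the claimed bound $-\Omega(\rho(t))+\Omega(a)\leq\int_{t_0}^{t}\alpha(s)\,ds$.

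Next I would deduce the case $a=0$ by a limiting argument. For every $\epsilon>0$ the hypothesis gives $\rho(t)\leq\epsilon+\int_{t_0}^{t}\alpha(s)\,\omega(\rho(s))\,ds$, so applying the case already proved with $a$ replaced by $\epsilon$ yields $-\Omega(\rho(t))+\Omega(\epsilon)\leq\int_{t_0}^{t}\alpha(s)\,ds$. Suppose $\rho(t)>0$ at some $t$; then $\Omega(\rho(t))$ is finite, so letting $\epsilon\to0^{+}$ drives the left-hand side to $+\infty$ because $\Omega(\epsilon)\to+\infty$, contradicting the finite right-hand side. Hence $\rho\equiv0$.

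The routine technical points, namely the absolute continuity of $R$, the local integrability of $\alpha\,\omega(\rho)$, and the a.e.\ validity of the chain rule for $\Omega\circ R$ on the range of $R$ (a compact subinterval of $(0,\infty)$ since $R\geq a>0$), are standard and I would dispatch them quickly. The only genuinely delicate step is the passage to the limit in the case $a=0$: it is precisely here that the divergence hypothesis $\int\frac{dr}{\omega(r)}=\infty$ near the origin is used, through $\Omega(\epsilon)\to+\infty$, and it is exactly this feature that upgrades a Gr\"onwall-type estimate to the Osgood uniqueness conclusion. I expect this limiting argument to be the crux of the proof.
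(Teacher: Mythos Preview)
Your argument is the standard and correct proof of the Osgood lemma. Note, however, that the paper does not actually prove this statement: it is listed in the Preliminaries section as a known result (``Bernstein type inequalities for fractional derivatives and Osgood inequality are often used in our proof. For reader's convenience, we list them here''), with no proof supplied. So there is nothing in the paper to compare against; you have filled in a proof the authors omitted.

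One small remark: you correctly make explicit the assumption that $\omega$ is nondecreasing, which is implicit in the paper's statement but essential for the step $\omega(\rho(t))\leq\omega(R(t))$. You also rightly note that the divergence hypothesis must be understood as $\int_{0}\frac{dr}{\omega(r)}=\infty$ (divergence near the origin) for the $a=0$ conclusion to follow; the paper's statement $\int_0^\infty\frac{dr}{\omega(r)}=\infty$ is slightly imprecise on this point, and your identification of $\Omega(\epsilon)\to+\infty$ as $\epsilon\to 0^+$ as the crux is exactly right.
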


At the end of this section, we collect some useful estimates for the smooth solutions of the following linear transport-diffusion equation
\begin{align}\label{linear transport}
\left \{\begin{aligned}
& \partial_{t} \theta + u \cdot \nabla \theta + \Lambda^{\beta}\theta = f, \quad \beta \in [0,1] \\
& \text{div} u = 0, \quad \theta(x,0)=\theta_{0}(x).
\end{aligned}\right.
\end{align}

The following Lemmas can be found in \cite{OTGWPOACOBNSS,AMPATTQGW,OTGWPOTTDBSWAZV,GWPOTCBEICBS}.
\begin{lemma}\label{Lpestimate}
Let $u$ be a smooth divergence-free vector field of $\mathbb{R}^{d}$ and $\theta$ be a smooth solution of equation (\ref{linear transport}).
Then for every $p \in [1,\infty]$ we have
\begin{align*}
\|\theta(t)\|_{L^{p}}\leq \|\theta_{0}\|_{L^{p}} + \int_{0}^{t} \|f(\tau)\|_{L^{p}}\, d\tau.
\end{align*}
\end{lemma}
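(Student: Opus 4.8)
The plan is to run a standard $L^{p}$ energy estimate on the linear transport--diffusion equation (\ref{linear transport}); the whole point is that the fractional dissipation $\Lambda^{\beta}\theta$ contributes with a favourable sign while the transport term is neutral because $u$ is divergence-free. First I would treat the range $p\in[2,\infty)$. Multiplying (\ref{linear transport}) by $\abs{\theta}^{p-2}\theta$ and integrating over $\mathbb{R}^{d}$, the transport term drops out since
\begin{align*}
\int_{\mathbb{R}^{d}}(u\cdot\nabla\theta)\,\abs{\theta}^{p-2}\theta\,dx=\frac1p\int_{\mathbb{R}^{d}}u\cdot\nabla\abs{\theta}^{p}\,dx=-\frac1p\int_{\mathbb{R}^{d}}(\mathrm{div}\,u)\,\abs{\theta}^{p}\,dx=0 .
\end{align*}
For the dissipative term I would invoke the pointwise C\'ordoba--C\'ordoba (Ju) inequality: for $0\leq\beta\leq2$ and $p\geq2$,
\begin{align*}
\abs{\theta}^{p-2}\theta\,\Lambda^{\beta}\theta\geq\tfrac1p\,\Lambda^{\beta}\abs{\theta}^{p},
\end{align*}
whose integral gives $\int_{\mathbb{R}^{d}}\abs{\theta}^{p-2}\theta\,\Lambda^{\beta}\theta\,dx\geq0$. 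Bounding the forcing term by H\"older's inequality, $\bigl\lvert\int_{\mathbb{R}^{d}}f\,\abs{\theta}^{p-2}\theta\,dx\bigr\rvert\leq\norm{f}_{L^{p}}\norm{\theta}_{L^{p}}^{p-1}$, one obtains
\begin{align*}
\frac1p\frac{d}{dt}\norm{\theta(t)}_{L^{p}}^{p}\leq\norm{f(t)}_{L^{p}}\norm{\theta(t)}_{L^{p}}^{p-1};
\end{align*}
dividing by $\norm{\theta(t)}_{L^{p}}^{p-1}$ on the time set where it is positive (the bound being trivial otherwise) and integrating in $t$ gives the claimed inequality for $p\in[2,\infty)$.

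Next I would handle the endpoint exponents. For $p=\infty$ the estimate follows by letting $p\to\infty$ in the bound just established, using $\norm{g}_{L^{p}}\to\norm{g}_{L^{\infty}}$ as $p\to\infty$; this is legitimate here because $\theta$ and $f$ are smooth and carry enough decay from the ambient a priori regularity. For $p\in[1,2)$ the map $r\mapsto\abs{r}^{p-2}r$ is not smooth at the origin, so I would instead test the equation against $\phi_{\delta}'(\theta)$ with the smooth convex regularisation $\phi_{\delta}(r)=(r^{2}+\delta^{2})^{p/2}$, use the convex form $\phi_{\delta}'(\theta)\Lambda^{\beta}\theta\geq\Lambda^{\beta}\phi_{\delta}(\theta)$ of the C\'ordoba--C\'ordoba inequality together with the same cancellation of the transport term to get $\frac{d}{dt}\int_{\mathbb{R}^{d}}\phi_{\delta}(\theta)\,dx\leq\int_{\mathbb{R}^{d}}\phi_{\delta}'(\theta)\,f\,dx$, and then let $\delta\to0$. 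This recovers $\frac{d}{dt}\norm{\theta(t)}_{L^{p}}\leq\norm{f(t)}_{L^{p}}$ and hence the claim; in fact the same regularisation works uniformly for all $p\in[1,\infty)$, so one could bypass the split entirely.

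The only non-routine ingredient, and hence the main obstacle, is the sign of the dissipation contribution, i.e.\ the pointwise inequality $\abs{\theta}^{p-2}\theta\,\Lambda^{\beta}\theta\geq\frac1p\Lambda^{\beta}\abs{\theta}^{p}$ and its convex analogue. This is precisely where the structure of the nonlocal operator $\Lambda^{\beta}$ with $0\leq\beta\leq2$ enters: one proves it by writing $\Lambda^{\beta}$ through its singular-kernel (or semigroup) representation and exploiting the convexity of $r\mapsto\abs{r}^{p}$. Once this is available, the divergence-free cancellation and H\"older's inequality finish the argument, and since the paper works throughout with smooth solutions no extra care is needed to justify the integrations by parts or the passages to the limit.
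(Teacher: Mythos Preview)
Your argument is correct and is precisely the standard proof of this maximum principle; the paper itself does not supply a proof of this lemma but merely cites it from \cite{OTGWPOACOBNSS,AMPATTQGW,OTGWPOTTDBSWAZV,GWPOTCBEICBS}, and the positivity of the dissipation via the C\'ordoba--C\'ordoba inequality together with the divergence-free cancellation is exactly the content of those references (in particular \cite{AMPATTQGW}). There is nothing to compare: your write-up is the proof the paper omits.
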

\begin{lemma}\label{est}
Let $u$ be a smooth divergence-free vector field of $\mathbb{R}^{d}$ with vorticity $\omega$ be a smooth solution of equation (\ref{linear transport}).
Then for every $(p,\rho) \in (1,\infty) \times [1,\infty]$, we have
\begin{align*}
\sup_{j \in \mathbb{N}} 2^{j\frac{\beta}{\rho}} \|\Delta_{j} u\|_{L^{\rho}_{t}L^{p}}
\lesssim_{\rho, p} \|\theta_{0}\|_{L^{p}} + \|\theta_{0}\|_{L^{\infty}} \|\omega\|_{L_{t}^{1}L^{p}} + \|f\|_{L_{t}^{1}L^{p}}.
\end{align*}
\end{lemma}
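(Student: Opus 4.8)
The plan is to derive the estimate by the now-standard frequency-localized energy method for transport--diffusion equations, reading the left-hand side as the dyadic blocks of the solution $\theta$ of (\ref{linear transport}) (the symbol $u$ in the displayed conclusion being a slip for $\theta$, since it is $\theta$ that solves the equation). First I would apply $\Delta_j$ to (\ref{linear transport}) and commute it past the transport operator, obtaining
\[
\partial_t\Delta_j\theta + u\cdot\nabla\Delta_j\theta + \Lambda^\beta\Delta_j\theta = \Delta_j f - [\Delta_j,u\cdot\nabla]\theta.
\]
Multiplying by $|\Delta_j\theta|^{p-2}\Delta_j\theta$ and integrating, the transport term drops out because $\operatorname{div}u=0$, while the dissipative term is bounded below by the generalized Bernstein (Córdoba--Córdoba type positivity) inequality, valid since $\Delta_j\theta$ is spectrally supported in an annulus of size $2^j$ for $j\in\mathbb{N}$:
\[
\int_{\mathbb{R}^d}\Lambda^\beta\Delta_j\theta\,|\Delta_j\theta|^{p-2}\Delta_j\theta\,dx \ge c\,2^{j\beta}\|\Delta_j\theta\|_{L^p}^p.
\]
This produces the scalar differential inequality
\[
\frac{d}{dt}\|\Delta_j\theta\|_{L^p} + c\,2^{j\beta}\|\Delta_j\theta\|_{L^p} \le \|\Delta_j f\|_{L^p} + \|[\Delta_j,u\cdot\nabla]\theta\|_{L^p}.
\]

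Next I would integrate this inequality via Duhamel's formula, giving pointwise in $t$
\[
\|\Delta_j\theta(t)\|_{L^p} \le e^{-c2^{j\beta}t}\|\Delta_j\theta_0\|_{L^p} + \int_0^t e^{-c2^{j\beta}(t-s)}\bigl(\|\Delta_j f(s)\|_{L^p}+\|[\Delta_j,u\cdot\nabla]\theta(s)\|_{L^p}\bigr)\,ds,
\]
and then take the $L^\rho_t$ norm. Applying Young's inequality in the time variable to the convolution against the kernel $e^{-c2^{j\beta}t}$, whose $L^\rho_t([0,\infty))$ norm is comparable to $2^{-j\beta/\rho}$, converts the $2^{j\beta/\rho}$ prefactor into a uniform bound and leaves the forcing and commutator measured in $L^1_tL^p$:
\[
\sup_{j}2^{j\beta/\rho}\|\Delta_j\theta\|_{L^\rho_tL^p} \lesssim \sup_j\|\Delta_j\theta_0\|_{L^p} + \sup_j\|\Delta_j f\|_{L^1_tL^p} + \sup_j\|[\Delta_j,u\cdot\nabla]\theta\|_{L^1_tL^p}.
\]
The first two suprema are dominated by $\|\theta_0\|_{L^p}$ and $\|f\|_{L^1_tL^p}$ respectively through the uniform boundedness of the projectors $\Delta_j$ on $L^p$.

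The crux, and the step I expect to be the main obstacle, is the commutator term. I would estimate $[\Delta_j,u\cdot\nabla]\theta$ through its kernel representation: writing $\Delta_j g = 2^{jd}h(2^j\cdot)\ast g$ with $h=\mathcal{F}^{-1}\varphi$ and using $\operatorname{div}u=0$ to integrate by parts, one gets
\[
[\Delta_j,u\cdot\nabla]\theta(x) = 2^{j(d+1)}\int(\nabla h)(2^j(x-y))\cdot\bigl(u(y)-u(x)\bigr)\,\theta(y)\,dy.
\]
Expressing $u(y)-u(x)=\int_0^1\nabla u(x+s(y-x))(y-x)\,ds$ and pulling $\theta$ out in $L^\infty$, a change of variables together with $\int|z|\,|\nabla h(z)|\,dz<\infty$ (which makes the $2^{j(d+1)}$ scaling cancel) yields the bound, uniform in $j$,
\[
\|[\Delta_j,u\cdot\nabla]\theta\|_{L^p} \lesssim \|\theta\|_{L^\infty}\,\|\nabla u\|_{L^p}.
\]
I would then convert $\nabla u$ into the vorticity by the Biot--Savart law: since $u$ is divergence-free with vorticity $\omega$, $\nabla u$ is a Calderón--Zygmund transform of $\omega$, hence $\|\nabla u\|_{L^p}\lesssim\|\omega\|_{L^p}$ for $1<p<\infty$, which is exactly where the restriction $p\in(1,\infty)$ enters. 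Finally the maximum principle of Lemma \ref{Lpestimate} controls $\|\theta(t)\|_{L^\infty}\le\|\theta_0\|_{L^\infty}$, so that taking $L^1_t$ norms and the supremum over $j$ gives $\sup_j\|[\Delta_j,u\cdot\nabla]\theta\|_{L^1_tL^p}\lesssim\|\theta_0\|_{L^\infty}\|\omega\|_{L^1_tL^p}$, which combined with the previous paragraph completes the estimate. The delicate points to be checked carefully are the validity of the positivity inequality for general $p$ and $\beta\in[0,1]$, the uniformity in $j$ of the commutator bound (including the harmless handling of the shift $x+s(y-x)$ in the $L^p$ change of variables), and the endpoint cases $\rho=1,\infty$ in the Young estimate.
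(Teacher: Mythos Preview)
The paper does not supply its own proof of this lemma: it is stated as one of the results that ``can be found in \cite{OTGWPOACOBNSS,AMPATTQGW,OTGWPOTTDBSWAZV,GWPOTCBEICBS}'' and is simply quoted. Your argument is precisely the standard frequency-localized energy method used in those references (notably Miao--Xue and Hmidi--Keraani--Rousset): localize, invoke the lower bound $\int\Lambda^\beta\Delta_j\theta\,|\Delta_j\theta|^{p-2}\Delta_j\theta\ge c\,2^{j\beta}\|\Delta_j\theta\|_{L^p}^p$, integrate the resulting ODE, apply Young's inequality in time to the exponential kernel, and close with the first-order commutator bound $\|[\Delta_j,u\cdot\nabla]\theta\|_{L^p}\lesssim\|\nabla u\|_{L^p}\|\theta\|_{L^\infty}$ together with the Calder\'on--Zygmund estimate $\|\nabla u\|_{L^p}\lesssim\|\omega\|_{L^p}$. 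You also correctly spot the typo $\Delta_j u\mapsto\Delta_j\theta$ in the displayed conclusion, confirmed by every subsequent application of the lemma in the paper.

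Two small points you flagged yourself deserve a comment. First, the dissipative lower bound for $p\ge 2$ follows from the combination of Lemma~\ref{fractional inequality} and Lemma~\ref{Bernstein new} as the paper later does; for $1<p<2$ one needs the version due to Danchin or Wu, not the references cited here, but the inequality is true in that range as well. Second, your appeal to Lemma~\ref{Lpestimate} to get $\|\theta(t)\|_{L^\infty}\le\|\theta_0\|_{L^\infty}$ is only valid when $f=0$; with nonzero $f$ that lemma gives $\|\theta(t)\|_{L^\infty}\le\|\theta_0\|_{L^\infty}+\|f\|_{L^1_tL^\infty}$, which would produce an extra cross term $\|f\|_{L^1_tL^\infty}\|\omega\|_{L^1_tL^p}$ not present in the stated bound. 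This is a slight imprecision in the lemma as written rather than an error in your reasoning, and it is harmless for the paper since every use of Lemma~\ref{est} there is for the temperature equation with $f\equiv 0$.
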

\begin{lemma}\label{expo}
Let $-1 < s < 1$, $(\rho, \rho_{1}, p, r) \in [1,\infty]^{4}$, $\rho_{1}\leq \rho$ and $u$ be a divergence-free vector field belonging
to $L_{\text{loc}}^{1}(\mathbb{R}^{+}; \text{Lip}(\mathbb{R}^{d}))$. We consider a smooth solution $\theta$ of the equation (\ref{linear transport}), then there exists $C > 0$ such that for every $t \in \mathbb{R}^{+}$,
\begin{align*}
\|\theta\|_{\widetilde{L}_{t}^{\infty}B_{p,q}^{s}} + \|(\text{Id} - \Delta_{-1})\theta\|_{\widetilde{L}_{t}^{\rho}B_{p,q}^{s+\frac{\beta}{\rho}}}
\leq C e^{C U(t)} \left( \|\theta_{0}\|_{B_{p,q}^{s}} + \|f\|_{L_{t}^{1}B_{p,q}^{s}} \right),
\end{align*}
and
\begin{align*}
\|\theta\|_{\widetilde{L}_{t}^{\rho}\dot{B}_{p,q}^{s+\frac{\beta}{\rho}}} \leq C e^{C U(t)}
\left( \|\theta_{0}\|_{\dot{B}_{p,q}^{s}} + \|f\|_{\widetilde{L}_{t}^{\rho_{1}}\dot{B}_{p,q}^{s+\frac{\beta}{\rho_{1}}-\beta}} \right),
\end{align*}
where $U(t) := \int_{0}^{t} \|\nabla u(\tau)\|_{L^{\infty}} \, d\tau$.
\end{lemma}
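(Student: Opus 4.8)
The plan is to localize in frequency and reduce both estimates to a scalar differential inequality for each dyadic block, then reassemble with the $\ell^q$ and time norms in the right order. First I would apply $\Delta_j$ (respectively $\dot\Delta_j$ for the homogeneous statement) to equation (\ref{linear transport}) to obtain
\begin{align*}
\partial_{t}\Delta_{j}\theta + u\cdot\nabla\Delta_{j}\theta + \Lambda^{\beta}\Delta_{j}\theta = \Delta_{j}f + [u\cdot\nabla,\Delta_{j}]\theta,
\end{align*}
where $[u\cdot\nabla,\Delta_{j}]\theta = u\cdot\nabla\Delta_{j}\theta - \Delta_{j}(u\cdot\nabla\theta)$. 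Multiplying by $|\Delta_{j}\theta|^{p-2}\Delta_{j}\theta$ and integrating over $\mathbb{R}^{d}$, the transport term drops out because $\operatorname{div} u = 0$, while for the dissipation I would invoke the generalized Bernstein positivity inequality (a refinement of Lemma \ref{bernstein} established in \cite{AMPATTQGW}), $\int |\Delta_{j}\theta|^{p-2}\Delta_{j}\theta\,\Lambda^{\beta}\Delta_{j}\theta\,dx \geq c\,2^{j\beta}\|\Delta_{j}\theta\|_{L^{p}}^{p}$, valid for $1\leq p\leq\infty$ and high frequencies $j\geq 0$. This yields the pointwise-in-time inequality
\begin{align*}
\frac{d}{dt}\|\Delta_{j}\theta\|_{L^{p}} + c\,2^{j\beta}\|\Delta_{j}\theta\|_{L^{p}} \leq \|\Delta_{j}f\|_{L^{p}} + \|[u\cdot\nabla,\Delta_{j}]\theta\|_{L^{p}},
\end{align*}
with the endpoints $p\in\{1,\infty\}$ handled by the usual approximation limit.

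The second ingredient is the commutator estimate. For $-1<s<1$, Bony's decomposition together with the mean-value form of the paraproduct gives
\begin{align*}
\|[u\cdot\nabla,\Delta_{j}]\theta\|_{L^{p}} \lesssim c_{j}\,2^{-js}\|\nabla u\|_{L^{\infty}}\|\theta\|_{B_{p,q}^{s}},
\end{align*}
where $(c_{j})_{j}$ has unit $\ell^{q}$ norm; the restriction $-1<s<1$ is used exactly here to control the remainder and the low–high interaction. The same bound holds verbatim with $\dot\Delta_{j}$ and $\dot B_{p,q}^{s}$ in the homogeneous case. Integrating the differential inequality by Duhamel produces
\begin{align*}
\|\Delta_{j}\theta(t)\|_{L^{p}} \leq e^{-c2^{j\beta}t}\|\Delta_{j}\theta_{0}\|_{L^{p}} + \int_{0}^{t} e^{-c2^{j\beta}(t-\tau)}\bigl(\|\Delta_{j}f\|_{L^{p}} + \|[u\cdot\nabla,\Delta_{j}]\theta\|_{L^{p}}\bigr)\,d\tau.
\end{align*}

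The heart of the argument is extracting the regularity gain from the decaying kernel inside the time-mixed norm. Taking the $L^{\rho}_{t}$ norm and applying Young's convolution inequality with $1+\frac{1}{\rho}=\frac{1}{\rho_{1}}+\frac{1}{a}$, I would use $\|e^{-c2^{j\beta}\cdot}\|_{L^{a}_{t}}\lesssim 2^{-j\beta/a}$; the arithmetic $s+\frac{\beta}{\rho}-\frac{\beta}{a}=s+\frac{\beta}{\rho_{1}}-\beta$ converts source regularity $s+\frac{\beta}{\rho_{1}}-\beta$ into target regularity $s+\frac{\beta}{\rho}$, which is precisely the index bookkeeping in the homogeneous estimate (the hypothesis $\rho_{1}\leq\rho$ guarantees $a\geq 1$). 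Weighting by $2^{j(s+\beta/\rho)}$ and taking $\ell^{q}$ in $j$ then gives the forcing term and the commutator contribution $\|\nabla u\|_{L^{\infty}}\|\theta\|_{\widetilde L^{\rho}_{t}\dot B^{s}_{p,q}}$, and I would close by a Grönwall/Osgood argument (Lemma \ref{osgood}) in time to produce the factor $e^{CU(t)}$. For the inhomogeneous estimate I would take $\rho_{1}=1$ on the forcing and treat the lowest block $\Delta_{-1}\theta$ separately, where the positivity inequality fails and no smoothing gain is available — this is exactly why the gain term carries the projection $(\mathrm{Id}-\Delta_{-1})$, while the full $\theta$ is controlled in $\widetilde L^{\infty}_{t}B^{s}_{p,q}$ without gain.

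The main obstacle I anticipate is the interplay of the commutator term with the $\widetilde L^{\rho}_{t}$ structure: because the time integration must remain inside the $\ell^{q}$ summation, one cannot simply run Grönwall block-by-block and sum afterwards. The clean way around this is to keep the $L^{1}_{t}$ forcing norm on the commutator, absorb the resulting $\|\theta\|_{\widetilde L^{\rho}_{t}B^{s}}$ into the left side after the exponential weight is accounted for, and only then invoke Osgood. The borderline regimes $s\to\pm1$ and $\rho=\infty$ require the usual care with the endpoint Young inequality and are dispatched using the embeddings between $L^{\rho}_{t}B$ and $\widetilde L^{\rho}_{t}B$ recorded earlier in this section.
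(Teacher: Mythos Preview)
The paper does not actually prove this lemma: it is stated in Section~2 as one of several preliminary results quoted from the literature, with the line ``The following Lemmas can be found in \cite{OTGWPOACOBNSS,AMPATTQGW,OTGWPOTTDBSWAZV,GWPOTCBEICBS}'' preceding Lemmas \ref{Lpestimate}--\ref{expo}. So there is no in-paper proof to compare against; your task reduces to whether your sketch is a correct reconstruction of the argument in those references.

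It is. Your outline --- localize with $\Delta_{j}$, use the lower Bernstein/positivity bound on the fractional dissipation to get the scalar ODE for $\|\Delta_{j}\theta\|_{L^{p}}$, integrate by Duhamel, apply Young in time to convert the exponential kernel into the $2^{-j\beta/a}$ gain, feed in the commutator estimate $\|[u\cdot\nabla,\Delta_{j}]\theta\|_{L^{p}}\lesssim c_{j}2^{-js}\|\nabla u\|_{L^{\infty}}\|\theta\|_{B_{p,q}^{s}}$ valid for $-1<s<1$, and close with Gronwall to produce $e^{CU(t)}$ --- is exactly the scheme used in \cite{OTGWPOACOBNSS} and \cite{GWPOTCBEICBS}. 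Your observation that the low block $\Delta_{-1}$ sees no smoothing and must be excluded from the gain term is also the reason the inhomogeneous estimate carries $(\mathrm{Id}-\Delta_{-1})$.

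One small caution: the positivity inequality you invoke, in the form proved in \cite{ANBIAT2DQGE} (Lemma~\ref{Bernstein new} here) and \cite{AMPATTQGW}, is stated for $p\in[2,\infty)$, not for all $p\in[1,\infty]$ as you wrote. The full range in the lemma statement is obtained in the cited references by combining this with duality or with the $L^{p}$ maximum-principle argument of \cite{AMPATTQGW} for the remaining exponents; you should flag which route you take rather than assert the positivity bound uniformly in $p$.
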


\section{Commutator estimates}

First we recall a pseudo-differential operator $\mathcal{R}_{\alpha}$
defined by $\mathcal{R}_{\alpha} := \Lambda^{-\alpha}\partial_{1}$, $0 < \alpha < 1$.
Considering $\mathcal{R}_{\alpha} = \Lambda^{1-\alpha} \mathcal{R}$, where $\mathcal{R}$ is the usual Riesz transform,
we call $\mathcal{R}_{\alpha}$ the modified Riesz transform. The following theorem can be found in \cite{OTGWPOACOBNSS}.
\begin{theorem}\label{generalized riesz}
Let $0 < \alpha < 1$, $q \in \mathbb{N}$ and $\mathcal{R}_{\alpha} := \frac{\partial_{1}}{\Lambda^{\alpha}}$ be the
modified Riesz transform.

(1)Let $\chi \in \mathcal{D}(\mathbb{R}^{d})$. Then for every $(p,s) \in [1,\infty] \times (\alpha -1,\infty)$,
\begin{align*}
\|\Lambda^{s}\chi(2^{-q}\Lambda)\mathcal{R}_{\alpha}\|_{\mathcal{L}(L^{p})} \lesssim 2^{q(s+1-\alpha)}.
\end{align*}

(2)Let $\mathcal{C}$ be a ring. Then there exists $\phi \in \mathcal{S}(\mathbb{R}^{d})$ whose spectrum does not meet
the origin such that
\begin{align*}
\mathcal{R}_{\alpha}u = 2^{q(d+1-\alpha)}\phi (2^{q} \cdot)*u
\end{align*}
for every $u$ with Fourier variable supported on $2^{q}\mathcal{C}$.
\end{theorem}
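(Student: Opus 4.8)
The final statement is Theorem~\ref{generalized riesz}, a two-part result about the modified Riesz transform $\mathcal{R}_\alpha = \Lambda^{-\alpha}\partial_1$. Let me sketch how I would prove it.

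\medskip

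The plan is to treat both parts as essentially Fourier-multiplier/convolution-kernel estimates obtained by rescaling back to the unit frequency annulus. For part (2), I would first observe that since $u$ has Fourier support in the dilated ring $2^q\mathcal{C}$, we may insert a smooth cutoff: pick $\varphi\in\mathcal{D}(\mathbb{R}^d)$ supported in a slightly larger ring $\widetilde{\mathcal{C}}$ with $\varphi\equiv 1$ on $\mathcal{C}$, so that $\widehat{\mathcal{R}_\alpha u}(\xi) = \frac{i\xi_1}{|\xi|^\alpha}\varphi(2^{-q}\xi)\,\widehat u(\xi)$. Writing $\frac{i\xi_1}{|\xi|^\alpha}\varphi(2^{-q}\xi) = 2^{q(1-\alpha)}\,m(2^{-q}\xi)$ with $m(\eta):=\frac{i\eta_1}{|\eta|^\alpha}\varphi(\eta)$, and noting $m\in\mathcal{S}(\mathbb{R}^d)$ with spectrum (i.e., support) in $\widetilde{\mathcal{C}}$ bounded away from $0$, one gets $\mathcal{R}_\alpha u = 2^{q(1-\alpha)}\,\mathcal{F}^{-1}(m(2^{-q}\cdot)) * u$. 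A routine scaling computation gives $\mathcal{F}^{-1}(m(2^{-q}\cdot))(x) = 2^{qd}\,\check m(2^q x)$, so setting $\phi := \check m = \mathcal{F}^{-1}m$ yields exactly $\mathcal{R}_\alpha u = 2^{q(d+1-\alpha)}\phi(2^q\cdot)*u$, and $\phi\in\mathcal{S}$ with the Fourier transform $m$ vanishing near the origin, as claimed.

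\medskip

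For part (1), I would argue similarly but keep track of the operator norm on $L^p$ via Young's inequality. The operator $\Lambda^s\chi(2^{-q}\Lambda)\mathcal{R}_\alpha$ has symbol $|\xi|^s\chi(2^{-q}\xi)\frac{i\xi_1}{|\xi|^\alpha}$. Since $\chi\in\mathcal{D}$, this symbol is supported in a ball $|\xi|\lesssim 2^q$, and on that ball we rescale: write $|\xi|^s\chi(2^{-q}\xi)\frac{i\xi_1}{|\xi|^\alpha} = 2^{q(s+1-\alpha)}\,n(2^{-q}\xi)$ where $n(\eta) = |\eta|^{s-\alpha}(i\eta_1)\chi(\eta)$. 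The condition $s > \alpha - 1$ ensures $s-\alpha+1 > 0$, i.e. the homogeneity degree $s-\alpha+1$ of the factor $|\eta|^{s-\alpha}\eta_1$ near $\eta=0$ is positive, so $n$ is at least continuous and (after a standard check of decay of its inverse Fourier transform, using that $\chi$ is compactly supported and smooth away from the mild singularity at $0$) $\check n\in L^1(\mathbb{R}^d)$ with $\|\check n\|_{L^1}$ a fixed constant independent of $q$. Then the operator is convolution against $2^{q(s+1-\alpha)}2^{qd}\check n(2^q\cdot)$, whose $L^1$ norm is $2^{q(s+1-\alpha)}\|\check n\|_{L^1}$, and Young's inequality $\|K*f\|_{L^p}\le\|K\|_{L^1}\|f\|_{L^p}$ gives the bound $\lesssim 2^{q(s+1-\alpha)}$ uniformly in $q$.

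\medskip

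The main obstacle — and the only genuinely delicate point — is verifying that $\check n \in L^1$ uniformly in $q$ when $s-\alpha$ is negative (which it typically is), i.e. controlling the low-frequency singularity of the symbol $|\eta|^{s-\alpha}\eta_1\chi(\eta)$ at $\eta=0$. The factor $|\eta|^{s-\alpha}\eta_1$ behaves like $|\eta|^{s-\alpha+1}$ times a bounded angular factor, which is a homogeneous distribution of degree $s-\alpha+1>0$; its inverse Fourier transform is homogeneous of degree $-(d+s-\alpha+1)$ away from the origin, hence integrable near infinity precisely because $s-\alpha+1>0$ makes the decay faster than $|x|^{-d}$, while the compact support and smoothness of $\chi$ handle integrability near the origin and the smooth-cutoff corrections. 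One must also confirm the degenerate cases (e.g. $s-\alpha$ a nonnegative even integer, where $|\eta|^{s-\alpha}$ is smooth) cause no trouble — they don't, since then $n$ is smooth and compactly supported. I would organize this as: (i) reduce to $q=0$ by scaling; (ii) split $n = n\,\psi_0 + n(1-\psi_0)$ with $\psi_0$ a cutoff near the origin; (iii) estimate the high part by integration by parts (smooth, compact support) and the low part by the homogeneity/Hausdorff–Young argument above, using $s>\alpha-1$ crucially. Everything else is bookkeeping.
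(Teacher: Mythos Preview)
The paper does not actually prove Theorem~\ref{generalized riesz}; it cites the result from \cite{OTGWPOACOBNSS} (Miao--Xue) and uses it as a black box. So there is no ``paper's own proof'' to compare against here.

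That said, your sketch is the right argument and would constitute a valid proof. Part~(2) is completely clean: inserting a smooth cutoff $\varphi$ equal to $1$ on $\mathcal{C}$ and supported in a slightly larger ring makes the symbol $m(\eta)=i\eta_1|\eta|^{-\alpha}\varphi(\eta)$ smooth and compactly supported away from the origin, so $\phi=\check m\in\mathcal{S}$ with $\widehat\phi$ vanishing near $0$, and the scaling computation is exact. For part~(1), the reduction by scaling to the single kernel $n(\eta)=i\eta_1|\eta|^{s-\alpha}\chi(\eta)$ and the Young-inequality step are correct; the only point requiring care, as you note, is $\check n\in L^1$. Your proposed splitting plus the homogeneity argument works: the piece away from $0$ is $C_c^\infty$ so has Schwartz inverse transform, while the piece near $0$ is $|\eta|^{s+1-\alpha}$ times a bounded angular factor times a cutoff, and the positivity of $s+1-\alpha$ is exactly what guarantees enough regularity of the symbol at the origin to force decay of $\check n$ faster than $|x|^{-d}$ at infinity. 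One small refinement: rather than invoking homogeneous distributions directly, it is slightly cleaner to note that $n$ has $\lfloor s+1-\alpha\rfloor$ classical derivatives in $L^1$ and a fractional derivative of order just below $s+1-\alpha$ bounded, then integrate by parts to get $|\check n(x)|\lesssim (1+|x|)^{-d-\varepsilon}$ for some $\varepsilon>0$; this avoids worrying about logarithmic corrections at integer exponents.
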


The following Lemma is useful in dealing with the commutator terms.
\begin{lemma}\label{commutator}
Let $p \in [1,\infty]$ and $\delta \in (0,1)$.

(1)If $|x|^{\delta}\phi \in L^{1}$, $f \in \dot{B}_{p,\infty}^{\delta}$ and $g \in L^{\infty}$, then
\begin{align}
\|\phi * (fg) - f(\phi *g)\|_{L^{p}} \leq C \||x|^{\delta}\phi\|_{L^{1}} \|f\|_{\dot{B}_{p,\infty}^{\delta}} \|g\|_{L^{\infty}}.
\end{align}
In the case when $\delta = 1$, we have
\begin{align}
\|\phi *(fg) - f (\phi *g)\|_{L^{p}} \leq C \||x|\phi\|_{L^{1}} \|\nabla f\|_{L^{p}} \|g\|_{L^{\infty}}.
\end{align}

(2)If $|x|^{\delta}\phi \in L^{1}$, $f \in \dot{B}_{\infty,\infty}^{\delta}$ and $g \in L^{p}$, then
\begin{align}
\|\phi * (fg) - f(\phi *g)\|_{L^{p}} \leq C \||x|^{\delta}\phi\|_{L^{1}} \|f\|_{\dot{B}_{\infty,\infty}^{\delta}} \|g\|_{L^{p}}.
\end{align}
In the case when $\delta = 1$, we have
\begin{align}
\|\phi *(fg) - f (\phi *g)\|_{L^{p}} \leq C \||x|\phi\|_{L^{1}} \|\nabla f\|_{L^{\infty}} \|g\|_{L^{p}}.
\end{align}
\end{lemma}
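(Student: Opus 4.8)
The plan is to reduce the commutator to a single integral in which the difference $f(x-y)-f(x)$ appears explicitly, and then to estimate that difference using the translation characterisation of the homogeneous Besov spaces recalled in Section~2 (and the fundamental theorem of calculus at the endpoint $\delta=1$).

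First I would observe that, inserting $f(x)$ into the convolution defining $\phi*(fg)$,
\begin{align*}
\big(\phi*(fg)\big)(x) - f(x)\,(\phi*g)(x) = \int_{\mathbb{R}^d}\phi(y)\,\big(f(x-y)-f(x)\big)\,g(x-y)\,dy .
\end{align*}
Taking the $L^p$ norm in $x$ and applying Minkowski's integral inequality gives
\begin{align*}
\big\|\phi*(fg) - f(\phi*g)\big\|_{L^p} \le \int_{\mathbb{R}^d}|\phi(y)|\,\big\|\big(f(\cdot-y)-f(\cdot)\big)\,g(\cdot-y)\big\|_{L^p}\,dy .
\end{align*}
For part~(1) I would bound the inner norm by $\|g\|_{L^\infty}\,\|f(\cdot-y)-f(\cdot)\|_{L^p}$, while for part~(2), using the translation invariance of $\|g\|_{L^p}$, by $\|g\|_{L^p}\,\|f(\cdot-y)-f(\cdot)\|_{L^\infty}$.

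The remaining ingredient is the bound on the translation difference of $f$. When $\delta\in(0,1)$, the characterisation $\|u\|_{\dot B^\delta_{p,\infty}}=\sup_{y\ne 0}|y|^{-\delta}\|u(\cdot-y)-u(\cdot)\|_{L^p}$ from Section~2 yields $\|f(\cdot-y)-f(\cdot)\|_{L^p}\le |y|^\delta\|f\|_{\dot B^\delta_{p,\infty}}$ in case~(1), and the analogous inequality with $L^\infty$ and $\dot B^\delta_{\infty,\infty}$ in case~(2); substituting and factoring the constants out of the $y$-integral leaves exactly $\int|\phi(y)|\,|y|^\delta\,dy=\||x|^\delta\phi\|_{L^1}$, which is the claimed estimate. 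For the endpoint $\delta=1$ the Besov difference characterisation is unavailable, so instead I would write $f(x-y)-f(x)=-\int_0^1 y\cdot\nabla f(x-sy)\,ds$, whence $\|f(\cdot-y)-f(\cdot)\|_{L^p}\le |y|\,\|\nabla f\|_{L^p}$ (respectively $\le |y|\,\|\nabla f\|_{L^\infty}$), and then conclude exactly as before.

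The argument is essentially elementary, and I do not expect a genuine analytic obstacle: the only care needed is in justifying Fubini/Minkowski (legitimate since $|x|^\delta\phi\in L^1$ and $f,g$ lie in the stated spaces, the translation difference of $f$ being independent of the polynomial ambiguity in the homogeneous Besov space), and in keeping track of which Hölder split and which difference bound belongs to each of the four cases. If one prefers to avoid these technicalities, one could first prove the statement for smooth, compactly supported $f$ and $g$ and then pass to the limit by density.
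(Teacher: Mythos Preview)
Your proposal is correct and follows essentially the same approach as the paper: write the commutator as $\int\phi(y)(f(x-y)-f(x))g(x-y)\,dy$, apply Minkowski's inequality, split via H\"older, and invoke the translation characterisation of $\dot B^\delta_{p,\infty}$ (respectively $\dot B^\delta_{\infty,\infty}$). In fact your write-up is more complete than the paper's, which only spells out part~(2) for $\delta\in(0,1)$ and refers to \cite{T2BEWLSV} for part~(1); you also supply the $\delta=1$ endpoint via the fundamental theorem of calculus, which the paper omits.
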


The first part is proved in \cite{T2BEWLSV}, so we just prove the second part of the above Lemma.

\begin{proof}
By Minkowski's inequality, for any $p \in [1,\infty]$,
\begin{align*}
&\quad\,\, \|\phi *(fg) - f(\phi *g)\|_{L^{p}} \\
& = \left[ \int \left| \int \phi(z)\left(f(x)-f(x-z)\right)g(x-z)\, dz\right|^{p} \, dx\right]^{1/p} \\
& \leq \int \left[ \int \left| \phi(z)\left( f(x)-f(x-z) \right) g(x-z) \right|^{p} \, dx \right]^{1/p} \, dz \\
& \leq \int \frac{\|f(\cdot)-f(\cdot -z)\|_{L^{\infty}}}{|z|^{\delta}} |z|^{\delta} |\phi(z)| \left( \int |g(x-z)|^{p} \, dx\right)^{1/p} \, dz   \\
& \leq \||x|^{\delta}\phi\|_{L^{1}} \sup_{|z|>0} \frac{\|f(\cdot)-f(\cdot -z)\|_{L^{\infty}}}{|z|^{\delta}} \|g\|_{L^{p}} \\
& \leq C \||x|^{\delta}\phi\|_{L^{1}}\|f\|_{\dot{B}_{\infty,\infty}^{\delta}} \|g\|_{L^{p}}.
\end{align*}
In the last inequality, we use the definition of $\dot{B}_{\infty,\infty}^{\delta}$.
\end{proof}

The next Theorem concerns the crucial commutators involving $\mathcal{R}_{\alpha}$.

\begin{theorem}\label{crucial commutators}
Let $\alpha \in (0,1)$,$\epsilon >0$ taken to be small enough, $u$ be a smooth divergence-free vector field of $\mathbb{R}^{d}$ and $\theta$ be a smooth scalar function. Then,

(1)For every $(s,p,q) \in (-1,\alpha - \sigma)\times [2,\infty]\times [1,\infty]$ and take $\epsilon >0$ satisfy
$s+\sigma +\epsilon < \alpha$ we have
\begin{align}
\|[\mathcal{R}_{\alpha},u\cdot \nabla]\theta\|_{B_{p,q}^{s}} \lesssim \|u \|_{\dot{B}_{p,\infty}^{1-\sigma-\epsilon}}
\left( \|\theta\|_{B_{\infty,q}^{s+1+\sigma+\epsilon-\alpha}} + \|\theta\|_{L^{\infty}} \right).
\end{align}

(2)In the 2 dimensional case, if $u= \nabla^{\perp} \Delta^{-1} \Lambda^{\sigma}\left(\log(\text{Id}-\Delta)\right)^{\gamma}\omega$
, $\omega := G + \mathcal{R}_{\alpha}\theta$ and $0\leq \sigma <\alpha <1$.
Then for every $0<s<\alpha -\sigma$, taking arbitrary small $\epsilon >0$ such that $s+\sigma+\epsilon <\alpha$,
we have
\begin{align}
\|[\mathcal{R}_{\alpha},u]\theta\|_{H^{s}} \lesssim &\|G\|_{L^{2}} \|\theta\|_{B_{\infty,2}^{s+\sigma+\epsilon-\alpha}}
+ \|\theta\|_{L^{\infty}} \|\theta\|_{H^{s+1+\sigma +\epsilon -2\alpha}}  \\
&  + \|G\|_{L^{2}} \|\theta\|_{L^{\frac{2}{1-\sigma}}}
+\|\theta\|_{L^{2}}\|\theta\|_{L^{\frac{2}{1-\sigma}}}.
\end{align}
\end{theorem}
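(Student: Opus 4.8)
The plan is to use Bony's paraproduct decomposition to split the commutator into pieces and estimate each piece using Theorem \ref{generalized riesz}, Lemma \ref{commutator}, and the Bernstein inequalities from Lemma \ref{bernstein}. For part (1), I would first localize: writing $[\mathcal{R}_\alpha, u\cdot\nabla]\theta = \mathcal{R}_\alpha(u\cdot\nabla\theta) - u\cdot\nabla(\mathcal{R}_\alpha\theta)$ and applying $\Delta_j$, I decompose the product $u\cdot\nabla\theta = T_{u^k}\partial_k\theta + T_{\partial_k\theta}u^k + R(u^k,\partial_k\theta)$ (using $\mathrm{div}\,u=0$ to move the derivative freely). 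The paraproduct term $T_{u^k}\partial_k\theta$ is the main one: here $\Delta_j[\mathcal{R}_\alpha, u\cdot\nabla]\theta = \sum_{|j-k|\le 4}[\mathcal{R}_\alpha\Delta_j, S_{k-1}u^k\cdot]\,\partial_k\Delta_k\theta$ roughly, and by part (2) of Theorem \ref{generalized riesz} the operator $\mathcal{R}_\alpha$ restricted to frequencies of size $2^j$ is convolution with $2^{j(d+1-\alpha)}\phi(2^j\cdot)$, so Lemma \ref{commutator}(1) with $\delta = 1-\sigma-\epsilon$ gives a gain: the commutator is bounded by $\||x|^{\delta}\phi_j\|_{L^1}\|S_{k-1}u^k\|_{\dot B^{\delta}_{\infty,\infty}}\|\partial_k\Delta_k\theta\|_{L^p} \lesssim 2^{j(\alpha-1)}2^{-j\delta}\cdot\|u\|_{\dot B^{1-\sigma-\epsilon}_{p,\infty}}\cdot 2^{k}\|\Delta_k\theta\|_{L^\infty}$, and collecting the powers of $2^j$ yields the claimed $B^s_{p,q}$ bound after multiplying by $2^{js}$ and summing in $\ell^q$. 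The remaining paraproduct and remainder terms $T_{\partial_k\theta}u^k$ and $R$ do not require the commutator structure — one simply estimates $\mathcal{R}_\alpha$ of each piece and $u\cdot\nabla$ of each piece separately using part (1) of Theorem \ref{generalized riesz} (the $\Lambda^s\chi(2^{-q}\Lambda)\mathcal{R}_\alpha$ bound) together with Bernstein — the spectral localization makes these terms lower order, and the $\|\theta\|_{L^\infty}$ term on the right absorbs the low-frequency/remainder contributions.

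For part (2), the structure is similar but now the commutator is $[\mathcal{R}_\alpha, u]\theta$ (no gradient), $u = \nabla^\perp\Delta^{-1}\Lambda^\sigma(\log(\mathrm{Id}-\Delta))^\gamma\omega$, and $\omega = G + \mathcal{R}_\alpha\theta$. The plan is to substitute $\omega = G + \mathcal{R}_\alpha\theta$ and treat the two contributions to $u$ separately: $u_G := \nabla^\perp\Delta^{-1}\Lambda^\sigma(\log)^\gamma G$ and $u_\theta := \nabla^\perp\Delta^{-1}\Lambda^\sigma(\log)^\gamma\mathcal{R}_\alpha\theta$. For the $u_G$ piece, $u_G$ is essentially $\Lambda^{\sigma-1}$ (times a bounded/logarithmic Fourier multiplier) applied to $G$; after Littlewood–Paley localization and Bony decomposition, Lemma \ref{commutator}(1)/(2) give the gain $\delta$ in the commutator with $\mathcal{R}_\alpha$, and the $L^2$-norm of $G$ pairs against the appropriate Besov norm of $\theta$ — the paraproduct $T_{u_G}\theta$ gives $\|G\|_{L^2}\|\theta\|_{B^{s+\sigma+\epsilon-\alpha}_{\infty,2}}$, while the term $T_\theta u_G$ and remainder give $\|\theta\|_{L^\infty}$ (or $\|\theta\|_{L^\infty}$-type) against $\|G\|_{L^2}$ — but since we want an $H^s$ bound and $G\in L^2$, one uses Bernstein to trade regularity, landing the low-frequency remainder on the crude term $\|G\|_{L^2}\|\theta\|_{L^{2/(1-\sigma)}}$ (Sobolev embedding handling the $\sigma$-loss). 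For the $u_\theta$ piece, $u_\theta$ is $\Lambda^{\sigma-1}(\log)^\gamma\mathcal{R}_\alpha\theta \sim \Lambda^{\sigma-\alpha}(\log)^\gamma\mathcal{R}\theta$, so now the commutator $[\mathcal{R}_\alpha, u_\theta]\theta$ is quadratic in $\theta$; the same paraproduct splitting produces $\|\theta\|_{L^\infty}\|\theta\|_{H^{s+1+\sigma+\epsilon-2\alpha}}$ from the main paraproduct (the two factors of $\theta$ each losing roughly $\alpha$ derivatives via the two modified-Riesz operators, explaining the exponent $s+1+\sigma+\epsilon-2\alpha$), and the lower-order pieces yield $\|\theta\|_{L^2}\|\theta\|_{L^{2/(1-\sigma)}}$.

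The main obstacle I expect is bookkeeping the precise range of exponents so that every sum over frequency indices actually converges in $\ell^q$ (resp. $\ell^2$): the constraint $s\in(-1,\alpha-\sigma)$ and $s+\sigma+\epsilon<\alpha$ is exactly what is needed so that, on one side, the high-frequency summation converges (this uses $s+1+\sigma+\epsilon-\alpha$ being the "right" shifted index and $s<\alpha-\sigma$ keeping the net power of $2^j$ negative), and on the other side the low-frequency summation converges (this uses $s>-1$, which is where the lower bound comes from, together with the offset $1-\alpha$ produced by $\mathcal{R}_\alpha$). A secondary technical point is handling the logarithmic multiplier $(\log(\mathrm{Id}-\Delta))^\gamma$: on a dyadic block of frequency $2^j$ it contributes a factor $\sim (1+|j|)^\gamma$, which must be shown to be harmless — either absorbed into a slightly worse $\epsilon$ (since $(1+|j|)^\gamma 2^{-j\epsilon'}$ is bounded) or carried through into a generalized Besov norm; here, since the statement of part (2) has no $\gamma$-decorated norms on the right-hand side, the plan is to absorb it by using a sliver of the $\epsilon$-room. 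The low-regularity "crude" terms $\|G\|_{L^2}\|\theta\|_{L^{2/(1-\sigma)}}$ and $\|\theta\|_{L^2}\|\theta\|_{L^{2/(1-\sigma)}}$ are there precisely to catch the low-frequency parts where the Besov-gain argument breaks down, and identifying which decomposition pieces must be routed there is the part requiring the most care.
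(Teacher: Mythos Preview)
Your plan is correct and follows essentially the same route as the paper: Bony's decomposition into the three pieces $I=\sum[\mathcal{R}_\alpha,S_{n-1}u\cdot\nabla]\Delta_n\theta$, $II=\sum[\mathcal{R}_\alpha,\Delta_n u\cdot\nabla]S_{n-1}\theta$, $III=\sum[\mathcal{R}_\alpha,\Delta_n u\cdot\nabla]\widetilde\Delta_n\theta$, then Theorem~\ref{generalized riesz} combined with Lemma~\ref{commutator} (with $\delta=1-\sigma-\epsilon$) on each block; in part~(2) the splitting $\omega=G+\mathcal{R}_\alpha\theta$ is inserted inside each block exactly as you describe, and the low-frequency remainder $[\mathcal{R}_\alpha,\Delta_{-1}u]\widetilde\Delta_{-1}\theta$ is what produces the $\|\theta\|_{L^{2/(1-\sigma)}}$ terms.

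One correction worth flagging: your statement that the remainder piece ``does not require the commutator structure'' is not quite right for the lowest-frequency contribution in part~(1). The paper splits $III=III^1+III^2$ with $III^2=\sum_i[\partial_i\mathcal{R}_\alpha\chi(D),\Delta_{-1}u^i]\widetilde\Delta_{-1}\theta$, and here the commutator structure \emph{is} used via Lemma~\ref{commutator}: estimating the two halves separately would give only $\|\Delta_{-1}u\|_{L^p}\|\theta\|_{L^\infty}$, and $\|\Delta_{-1}u\|_{L^p}$ is not controlled by the homogeneous norm $\|u\|_{\dot B^{1-\sigma-\epsilon}_{p,\infty}}$ (the low-frequency sum diverges). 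Routing $III^2$ through Lemma~\ref{commutator} with the kernel $h$ of $\partial_i\mathcal{R}_\alpha\chi(D)$ (which satisfies $|h(x)|\lesssim(1+|x|)^{-d-2+\alpha}$, so $|x|^{1-\sigma-\epsilon}h\in L^1$) recovers the correct bound $\|u\|_{\dot B^{1-\sigma-\epsilon}_{p,\infty}}\|\theta\|_{L^\infty}$.
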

\begin{proof}
(1)
Due to Bony's decomposition we split the commutator term into three parts
\begin{align*}
[\mathcal{R}_{\alpha},u\cdot \nabla]\theta  = &\sum_{n\in \mathbb{N}}[\mathcal{R}_{\alpha},S_{n-1}u\cdot \nabla]\Delta_{n}\theta
+ \sum_{n\in \mathbb{N}}[\mathcal{R}_{\alpha},\Delta_{n}u\cdot \nabla]S_{n-1}\theta    \\
& + \sum_{n\geq -1}[\mathcal{R}_{\alpha},\Delta_{n}u\cdot \nabla]\widetilde{\Delta}_{n}\theta   \\
= &I+II+III
\end{align*}
For I, since for every $n \in \mathbb{N}$ the Fourier transform of $S_{n-1}u\Delta_{n}\theta$ is supported in a ring of size $2^{n}$,
then from Theorem \ref{generalized riesz} and Lemma \ref{commutator}, we have for every $j\geq -1$
\begin{align*}
\|\Delta_{j}I\|_{L^{p}} & \lesssim \sum_{|n-j|\leq 4} \|[\phi_{n}*,S_{n-1}u\cdot \nabla]\Delta_{n}\theta\|_{L^{p}}  \\
& \lesssim \sum_{|n-j|\leq 4} 2^{n(\sigma + \epsilon - \alpha)} \|u\|_{\dot{B}_{p,\infty}^{1-\sigma-\epsilon}}2^{n}\|\Delta_{n}\theta\|_{L^{\infty}} \\
& \lesssim c_{j}2^{-js} \|u\|_{\dot{B}_{p,\infty}^{1-\sigma-\epsilon}} \|\theta\|_{B_{\infty,q}^{s+\sigma +\epsilon +1 -\alpha}},
\end{align*}
where $\phi_{n}(x) := 2^{n(d+1-\alpha)} \phi(2^{n}x)$ with $\phi \in \mathcal{S}$ and $(c_{j})_{j\geq -1}$
with $\|c_{j}\|_{\ell^{q}}=1$.
Thus we obtain
\begin{align*}
\|I\|_{B_{p,q}^{s}} \lesssim \|u\|_{\dot{B}_{p,\infty}^{1-\sigma-\epsilon}} \|\theta\|_{B_{\infty,q}^{s+\sigma+\epsilon +1-\alpha}}.
\end{align*}
For II, similar to I we have
\begin{align*}
\|\Delta_{j}II\|_{L^{p}} & \lesssim \sum_{|n-j|\leq 4,n \in \mathbb{N}} \|[\phi_{n}*,\Delta_{n}u\cdot \nabla]S_{n-1}\theta\|_{L^{p}}  \\
& \lesssim \sum_{|n-j|\leq 4}2^{n(\sigma+\epsilon-\alpha)}\|u\|_{\dot{B}_{p,\infty}^{1-\sigma-\epsilon}}\|\nabla S_{n-1}\theta\|_{L^{\infty}}    \\
& \lesssim \|u\|_{\dot{B}_{p,\infty}^{1-\sigma-\epsilon}}2^{-js}\sum_{-1\leq n^{'} \leq j+2} 2^{(n^{'}-j)(\alpha -\sigma -\epsilon -s)}
2^{n^{'}(s+\sigma +\epsilon -\alpha +1)}\|\Delta_{n^{'}}\theta\|_{L^{\infty}}.
\end{align*}
Thus using discrete Young's inequality we obtain
\begin{align*}
\|II\|_{B_{p,q}^{s}} \lesssim \|u\|_{\dot{B}_{p,\infty}^{1-\sigma-\epsilon}} \|\theta\|_{B_{\infty,q}^{s+\sigma+\epsilon +1-\alpha}}.
\end{align*}
For III, we further write
\begin{align*}
III & = \sum_{n \geq 0}\text{div} [\mathcal{R}_{\alpha},\Delta_{n}u]\widetilde{\Delta}_{n}\theta +
\sum_{1\leq i \leq n} [\partial_{i}\mathcal{R}_{\alpha},\Delta_{-1}u^{i}]\widetilde{\Delta}_{-1}\theta  \\
& = III^{1}+III^{2}.
\end{align*}
Considering Bernstein's inequality and Theorem \ref{generalized riesz}, we deal with the term $III^{1}$ as follows
\begin{align*}
& \|\Delta_{j}III^{1}\|_{L^{p}} \\
& \lesssim \sum_{n\geq j-3 , n\geq 0} \|\Delta_{j}\text{div} \mathcal{R}_{\alpha} (\Delta_{n}
u\widetilde{\Delta}_{n}\theta)\|_{L^{p}} + \sum_{n\geq j-3, n\geq 0} \|\Delta_{j}\text{div}(\Delta_{n}u \mathcal{R}_{\alpha}
\widetilde{\Delta}_{n}\theta)\|_{L^{p}} \\
& \lesssim \sum_{n\geq j-3}(2^{j(2-\alpha)}+2^{j}2^{n(1-\alpha)})2^{-n(1-\sigma-\epsilon)}\|\Delta_{n}\Lambda^{1-\sigma-\epsilon}u\|_{L^{p}}
\|\widetilde{\Delta}_{n}\theta\|_{L^{\infty}} \\
& \lesssim \|u\|_{\dot{B}_{p,\infty}^{1-\sigma-\epsilon}}2^{-js}\sum_{n\geq j-4}
(2^{(j-n)(s+2-\alpha)}+2^{(j-n)(s+1)})2^{n(s+1+\sigma +\epsilon -\alpha)}\|\Delta_{n}\theta\|_{L^{\infty}}.
\end{align*}
Thus we obtain for every $s > -1$
\begin{align*}
\|III^{1}\|_{B_{p,q}^{s}}\lesssim \|u\|_{\dot{B}_{p,\infty}^{1-\sigma-\epsilon}} \|\theta\|_{B_{\infty,q}^{s+1+\sigma+\epsilon -\alpha}}
\end{align*}
Choosing a suitable function $\chi \in \mathcal{D}(\mathbb{R}^{d})$, we have
\begin{align*}
III^{2}=\sum_{1\leq i\leq n}[\partial_{i}\mathcal{R}_{\alpha}\chi(D),\Delta_{-1}u^{i}]\widetilde{\Delta}_{-1}\theta.
\end{align*}
By Theorem \ref{generalized riesz}, we know that $\partial_{i}\mathcal{R}_{\alpha}\chi(D)$ is a convolution operator with
kernel $h$ satisfying
\begin{align*}
|h(x)|\leq C(1+|x|)^{-d-2+\alpha}, \quad \text{for all} \,\,\, x\in \mathbb{R}^{d}.
\end{align*}
Next, from the fact that $\Delta_{j}III^{2} = 0$ for every $j \geq 3$ and using Lemma \ref{commutator}, we have
\begin{align*}
\|III^{2}\|_{B_{p,q}^{s}} & \lesssim \|[h*,\Delta_{-1}u]\widetilde{\Delta}_{-1}\theta\|_{L^{p}} \\
& \lesssim \||x|^{1-\sigma-\epsilon}h\|_{L^{1}}\|\Delta_{-1}u\|_{\dot{B}_{p,\infty}^{1-\sigma-\epsilon}}\|\widetilde{\Delta}_{-1}\theta\|_{L^{\infty}} \\
& \lesssim \|u\|_{\dot{B}_{p,\infty}^{1-\sigma-\epsilon}} \|\theta\|_{L^{\infty}}.
\end{align*}
Hence, the proof of the first part is complete.

(2)
As in the first part, we can get the following equality by Bony's decomposition.
\begin{align*}
[\mathcal{R}_{\alpha},u]\theta & = \sum_{n\in \mathbb{N}} [\mathcal{R}_{\alpha},S_{n-1}u]\Delta_{n}\theta
+\sum_{n\in \mathbb{N}} [\mathcal{R}_{\alpha},\Delta_{n}u]S_{n-1}\theta + \sum_{n\geq -1}[\mathcal{R}_{\alpha},\Delta_{n}u]\widetilde{\Delta}_{n}\theta  \\
& = I + II + III.
\end{align*}
For brevity, let $P(\Lambda):=\Lambda^{\sigma}\left( \log(\text{Id}-\Delta) \right)^{\gamma}$.
For I, denote $I_{n} := [\mathcal{R}_{\alpha},S_{n-1}u]\Delta_{n}\theta$. Since for each $n\in \mathbb{N}$
the Fourier transform of $S_{n-1}u\Delta_{n}\theta$ is supported in a ring of size $2^{n}$, from Theorem \ref{generalized riesz} there
exists $\phi \in \mathcal{S}(\mathbb{R}^{2})$ whose spectrum is away from the origin such that
\begin{align*}
I_{n} = [\phi_{n}*,S_{n-1}\nabla^{\bot}\Delta^{-1}P(\Lambda)G]\Delta_{n}\theta + [\phi_{n}*,S_{n-1}\nabla^{\bot}\Delta^{-1}
P(\Lambda)\mathcal{R}_{\alpha}\theta]\Delta_{n}\theta,
\end{align*}
where $\phi_{n}(x):=2^{n(d+1-\alpha)}\phi(2^{n}x)$. Using Lemma \ref{commutator} and Theorem \ref{generalized riesz}, we obtain
\begin{align*}
\|I_{n}\|_{L^{2}} \lesssim & \||x|^{1-\sigma-\epsilon}\phi_{n}\|_{L^{1}} \|\Lambda^{1-\sigma-\epsilon}S_{n-1}
\nabla^{\bot}\Delta^{-1}P(\Lambda)G\|_{L^{2}} \|\Delta_{n}\theta\|_{L^{\infty}} \\
 & + \||x|^{1-\sigma-\epsilon}\phi_{n}\|_{L^{1}} \|\Lambda^{1-\sigma-\epsilon}S_{n-1}\nabla^{\bot}\Delta^{-1}P(\Lambda)
 \mathcal{R}_{\alpha}\theta\|_{L^{\infty}} \|\Delta_{n}\theta\|_{L^{2}}  \\
\lesssim & 2^{n(\sigma+\epsilon -\alpha)}\||x|^{1-\sigma-\epsilon}\phi\|_{L^{1}} \|G\|_{L^{2}} \|\Delta_{n}\theta\|_{L^{\infty}} \\
& + 2^{n(\sigma + \epsilon -\alpha)}2^{n(1-\alpha)} \||x|^{1-\sigma -\epsilon }\phi\|_{L^{1}} \|\theta\|_{L^{\infty}}\|\Delta_{n}\theta\|_{L^{2}} \\
\lesssim & 2^{n(\sigma +\epsilon -\alpha)}\|G\|_{L^{2}}\|\Delta_{n}\theta\|_{L^{\infty}}
+ 2^{n(\sigma +\epsilon -\alpha)}2^{n(1-\alpha)}\|\theta\|_{L^{\infty}}\|\Delta_{n}\theta\|_{L^{2}}.
\end{align*}
Thus, we can obtain
\begin{align*}
\|I\|_{H^{s}} \lesssim \|G\|_{L^{2}} \|\theta\|_{B_{\infty,2}^{s+\sigma+\epsilon -\alpha}}
+ \|\theta\|_{L^{\infty}} \|\theta\|_{H^{s+\sigma +\epsilon +1- 2\alpha}}.
\end{align*}
For II, denote $II_{n}:=[\mathcal{R}_{\alpha},\Delta_{n}u]S_{n-1}\theta$. As in I, we have
\begin{align*}
II_{n} = [\phi_{n}*,\Delta_{n}\Delta^{-1}\nabla^{\bot}P(\Lambda)G]S_{n-1}\theta + [\phi_{n}*,\Delta_{n}\Delta^{-1}\nabla^{\bot}
P(\Lambda)\mathcal{R}_{\alpha}\theta]S_{n-1}\theta
\end{align*}
By Lemma \ref{commutator}, we get
\begin{align*}
\|II_{n}\|_{L^{2}} \lesssim & \||x|^{1-\sigma-\epsilon}\phi_{n}\|_{L^{1}} \|\Lambda^{1-\sigma-\epsilon}\Delta_{n}\nabla^{\bot}\Delta^{-1}P(\Lambda)G\|_{L^{2}} \|S_{n-1}\theta\|_{L^{\infty}} \\
& + \||x|^{1-\sigma-\epsilon}\phi_{n}\|_{L^{1}} \|\Lambda^{1-\sigma-\epsilon}\Delta_{n}\nabla^{\bot}\Delta^{-1}P(\Lambda)\mathcal{R}_{\alpha}\theta\|_{L^{2}}\|S_{n-1}\theta\|_{L^{\infty}} \\
& \lesssim 2^{n(\sigma + \epsilon -\alpha)} \|G\|_{L^{2}} \|S_{n-1}\theta\|_{L^{\infty}}
+ 2^{n(\sigma + \epsilon - \alpha)} 2^{n(1-\alpha)} \|\Delta_{n}\theta\|_{L^{2}}\|\theta\|_{L^{\infty}}.
\end{align*}
Hence, by discrete Young's inequality, we have
\begin{align*}
\|II\|_{H^{s}} \lesssim \|G\|_{L^{2}} \|\theta\|_{B_{\infty,2}^{s+\sigma +\epsilon -\alpha}} +
\|\theta\|_{L^{\infty}} \|\theta\|_{H^{s+1+\sigma +\epsilon -2 \alpha}}.
\end{align*}
We further write III as follows
\begin{align*}
III & = \sum_{n\geq 0} \mathcal{R}_{\alpha}(\Delta_{n}u\widetilde{\Delta}_{n}\theta) + \sum_{n\geq 0}\Delta_{n}u
\mathcal{R}_{\alpha} \widetilde{\Delta}_{n}\theta + [\mathcal{R}_{\alpha},\Delta_{-1}u]\widetilde{\Delta}_{-1}\theta    \\
& = III^{1} + III^{2} + III^{3}.
\end{align*}
First, we note that for every $n \geq 0$
\begin{align*}
\|\Delta_{n}u\|_{L^{2}} \lesssim 2^{n(\sigma+\epsilon-1)}\|\Delta_{n}G\|_{L^{2}} + 2^{n(\sigma+\epsilon-\alpha)} \|\Delta_{n}\theta\|_{L^{2}}.
\end{align*}
Then by a direct computation we have
\begin{align*}
& 2^{js}\|\Delta_{j}III^{1}\|_{L^{2}} \\
\lesssim & 2^{j(s+1-\alpha)} \sum_{n\geq j-3, n\geq 0} \|\Delta_{n}u\|_{L^{2}}\|\widetilde{\Delta}_{n}\theta\|_{L^{\infty}} \\
\lesssim & 2^{j(s+1-\alpha)}\sum_{n\geq j-3}\left( 2^{n(\sigma+\epsilon-1)}\|\Delta_{n}G\|_{L^{2}} + 2^{n(\sigma+\epsilon -\alpha)}
\|\Delta_{n}\theta\|_{L^{2}} \right) \|\widetilde{\Delta}_{n}\theta\|_{L^{\infty}} \\
\lesssim & \sum_{n\geq j-4} 2^{(j-n)(s+1-\alpha)}\left( 2^{n(s+\sigma+\epsilon -\alpha)}\|\Delta_{n}\theta\|_{L^{\infty}}\|G\|_{L^{2}}
+ 2^{n(s+\sigma +\epsilon +1 -2\alpha)}\|\Delta_{n}\theta\|_{L^{2}} \|\theta\|_{L^{\infty}} \right).
\end{align*}
Thus discrete Young's inequality yields
\begin{align*}
\|III^{1}\|_{H^{s}} \lesssim \|G\|_{L^{2}} \|\theta\|_{B_{\infty,2}^{s+\sigma+\epsilon-\alpha}} +
\|\theta\|_{L^{\infty}} \|\theta\|_{H^{s+1+\sigma+\epsilon -2\alpha}}.
\end{align*}
For $III^{2}$, similar to $III^{1}$ and using Theorem \ref{generalized riesz} we have
\begin{align*}
& 2^{js}\|\Delta_{j}III^{2}\|_{L^{2}} \\
\lesssim & 2^{js} \sum_{n\geq j-3,n\geq 0} \|\Delta_{n}u\|_{L^{2}}\|\mathcal{R}_{\alpha}\widetilde{\Delta}_{n}\theta\|_{L^{\infty}}  \\
\lesssim & 2^{js} \sum_{n\geq j-3} \left( 2^{n(\sigma + \epsilon -1)}\|\Delta_{n}G\|_{L^{2}}+2^{n(\sigma+\epsilon-\alpha)}\|\Delta_{n}\theta\|_{L^{2}} \right) 2^{n(1-\alpha)}\|\widetilde{\Delta}_{n}\theta\|_{L^{\infty}} \\
\lesssim & \sum_{n\geq j-4} 2^{(j-n)s}\left( 2^{n(s+\sigma+\epsilon-\alpha)}\|\Delta_{n}\theta\|_{L^{\infty}}\|G\|_{L^{2}}
+ 2^{n(s+\sigma+\epsilon +1 -2\alpha)}\|\Delta_{n}\theta\|_{L^{2}}\|\theta\|_{L^{\infty}} \right).
\end{align*}
Using convolution inequality, we obtain
\begin{align*}
\|III^{2}\|_{H^{s}} \lesssim \|G\|_{L^{2}}\|\theta\|_{B_{\infty,2}^{s+\sigma+\epsilon-\alpha}} + \|\theta\|_{L^{\infty}}\|\theta\|_{H^{s+1+\sigma+\epsilon-2\alpha}}.
\end{align*}
For $III^{3}$, since $\Delta_{j}III^{3} = 0$ for every $j\geq 3$, then from Bernstein's inequality we immediately have
\begin{align*}
\|III^{3}\|_{H^{s}} & \lesssim \|\mathcal{R}_{\alpha}(\Delta_{-1}u\widetilde{\Delta}_{-1}\theta)\|_{L^{2}}
+ \|\Delta_{-1}u \mathcal{R}_{\alpha}\widetilde{\Delta}_{-1}\theta\|_{L^{2}} \\
& \lesssim \|\Delta_{-1}u\widetilde{\Delta}_{-1}\theta\|_{L^{2}} + \|\Delta_{-1}u \mathcal{R}_{\alpha}\widetilde{\Delta}_{-1}\theta\|_{L^{2}} \\
& \lesssim \|G\|_{L^{2}} \|\theta\|_{\frac{2}{1-\sigma}} + \|\theta\|_{L^{2}}\|\theta\|_{L^{\frac{2}{1-\sigma}}}.
\end{align*}
Here, we used the following fact
\begin{align*}
\|\Delta_{-1}u \widetilde{\Delta}_{-1}\theta\|_{L^{2}} & \lesssim \|\Delta_{-1}u\|_{L^{q_{1}}} \|\widetilde{\Delta}_{-1}\theta\|_{L^{p_{2}}} \\
& \lesssim \|\Delta_{-1}\Lambda^{\sigma-1+\epsilon} \omega\|_{L^{q_{1}}} \|\widetilde{\Delta}_{-1}\theta\|_{L^{p_{2}}} \\
& \lesssim \|\Delta_{-1}\omega\|_{L^{p_{1}}} \|\widetilde{\Delta}_{-1}\theta\|_{L^{p_{2}}}  \\
& \lesssim \|\Delta_{-1}G\|_{L^{p_{1}}} \|\widetilde{\Delta}_{-1}\theta\|_{L^{p_{2}}} + \|\Delta_{-1}\mathcal{R}_{\alpha}\theta\|_{L^{p_{1}}}\|\widetilde{\Delta}_{-1}\theta\|_{L^{p_{2}}} \\
& \lesssim \|G\|_{L^{p_{1}}}\|\theta\|_{L^{p_{2}}} + \|\theta\|_{L^{p_{1}}}\|\theta\|_{L^{p_{2}}},
\end{align*}
where $\frac{1}{q_{1}} + \frac{1}{p_{2}} = \frac{1}{2}$ and $\frac{1}{p_{1}}+\frac{1}{p_{2}} = \frac{1}{2} + \frac{1-\sigma}{2}$. And taking $p_{1} =2$, $p_{2}=\frac{2}{1-\sigma}$ in our deduction.
From all the above statements, we can obtain our conclusion.
\end{proof}

\section{Some priori estimates}

First we need to introduce some notations. Let $G := \omega - \mathcal{R}_{\alpha}\theta$. Considering the vorticity equation
\begin{align*}
\partial_{t}\omega + u\cdot \nabla \omega + \Lambda^{\alpha} \omega = \partial_{1}\theta,
\end{align*}
and the acting of $\mathcal{R}_{\alpha}$ on the temperature equation
\begin{align*}
\partial_{t}\mathcal{R}_{\alpha}\theta + u\cdot \nabla \mathcal{R}_{\alpha}\theta + \Lambda^{\beta} \mathcal{R}_{\alpha}\theta
= -[\mathcal{R}_{\alpha},u\cdot \nabla]\theta,
\end{align*}
we directly have
\begin{align}\label{equation for G}
\partial_{t}G + u\cdot \nabla G +\Lambda^{\alpha} G =[\mathcal{R}_{\alpha},u\cdot \nabla]\theta +
\Lambda^{\beta}\mathcal{R}_{\alpha}\theta
\end{align}

\subsection{Estimation of $\|G\|_{L^{2}}$}

We present a Lemma that is proved in \cite{OTGWPOACOBNSS}, for it is useful in our proof.
\begin{lemma}\label{Lp}
Let $(\omega,\theta)$ be a smooth solution of the system (\ref{GB}). Then for every $m\in [2,\infty]$ and $t\in \mathbb{R}^{+}$
\begin{align}
\|\theta\|_{L_{t}^{m}\dot{H}^{\frac{\beta}{m}}} \lesssim \|\theta_{0}\|_{L^{2}}
\end{align}
and for $p\in [1,\infty]$
\begin{align}
\|\theta(t)\|_{L^{p}}\leq \|\theta_{0}\|_{L^{p}}.
\end{align}
\end{lemma}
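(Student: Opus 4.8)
The plan is to establish both bounds directly from the temperature equation
$\partial_{t}\theta + u\cdot\nabla\theta + \Lambda^{\beta}\theta = 0$ with $\mathrm{div}\,u = 0$.
The second bound is the easier one: I would apply Lemma \ref{Lpestimate} to equation (\ref{linear transport}) with $f = 0$ and $\beta$ in place of the general exponent, which immediately yields $\|\theta(t)\|_{L^{p}} \leq \|\theta_{0}\|_{L^{p}}$ for every $p\in[1,\infty]$. (Equivalently, one tests the equation with $|\theta|^{p-2}\theta$, uses the divergence-free condition to kill the transport term, and uses the positivity property $\int \Lambda^{\beta}\theta\,|\theta|^{p-2}\theta\,dx \geq 0$ of the fractional Laplacian, the $p=\infty$ case following by passing to the limit.)

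For the first bound I would work at the $L^{2}$ level. Taking the $L^{2}$ inner product of the temperature equation with $\theta$ and using $\mathrm{div}\,u=0$ to eliminate $u\cdot\nabla\theta$, one gets the energy identity
\begin{align*}
\frac{1}{2}\frac{d}{dt}\|\theta(t)\|_{L^{2}}^{2} + \|\Lambda^{\beta/2}\theta(t)\|_{L^{2}}^{2} = 0,
\end{align*}
hence after integration in time
\begin{align*}
\|\theta(t)\|_{L^{2}}^{2} + 2\int_{0}^{t}\|\Lambda^{\beta/2}\theta(\tau)\|_{L^{2}}^{2}\,d\tau = \|\theta_{0}\|_{L^{2}}^{2}.
\end{align*}
This gives simultaneously $\|\theta\|_{L_{t}^{\infty}L^{2}}\leq\|\theta_{0}\|_{L^{2}}$, i.e. the case $m=\infty$, and $\|\theta\|_{L_{t}^{2}\dot{H}^{\beta/2}}\leq\|\theta_{0}\|_{L^{2}}$, i.e. the case $m=2$. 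The intermediate values $m\in(2,\infty)$ then follow by interpolation: by the Gagliardo--Nirenberg / Sobolev interpolation inequality in frequency, $\|\Lambda^{\beta/m}\theta\|_{L^{2}} \lesssim \|\theta\|_{L^{2}}^{1-2/m}\|\Lambda^{\beta/2}\theta\|_{L^{2}}^{2/m}$, so that
\begin{align*}
\|\theta\|_{L_{t}^{m}\dot{H}^{\beta/m}} \lesssim \|\theta\|_{L_{t}^{\infty}L^{2}}^{1-2/m}\,\|\theta\|_{L_{t}^{2}\dot{H}^{\beta/2}}^{2/m} \lesssim \|\theta_{0}\|_{L^{2}},
\end{align*}
where the exponent bookkeeping is exactly $(1-2/m)\cdot\infty^{-1} + (2/m)\cdot 2^{-1} = 1/m$ in the time variable and likewise in the smoothness index.

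I do not expect any genuine obstacle here, since this is the standard transport--diffusion maximum principle plus an energy estimate and the solution is assumed smooth (so all the above manipulations are justified and no commutator or Littlewood--Paley machinery is needed). The only mild point of care is the $p=\infty$ endpoint in the second inequality, which is handled by the limiting argument mentioned above, or by simply invoking Lemma \ref{Lpestimate} which already states it; and the interpolation step, which is routine. This is why the statement is recorded as a lemma borrowed from \cite{OTGWPOACOBNSS} rather than proved in detail.
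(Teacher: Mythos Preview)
Your proposal is correct and follows the standard argument. Note that the paper does not actually give a proof of this lemma; it simply cites \cite{OTGWPOACOBNSS}, and the argument you outline (the $L^{p}$ maximum principle from Lemma~\ref{Lpestimate} together with the $L^{2}$ energy identity and interpolation between $m=2$ and $m=\infty$) is precisely the standard proof one finds there.
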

The following is our estimation about $\|G\|_{L^{2}}$.
\begin{theorem}\label{GL2}
Consider (\ref{GB}) with $\sigma = 0$ and $\gamma \geq 0$. Assume that $(\omega_{0},\theta_{0})$ satisfies the
conditions in Theorem \ref{global result}. Let $(\omega,\theta)$ be the corresponding solution of (\ref{GB}), $G$ is defined as above.
Then if $\alpha$,
$\beta$ satisfies
$(\alpha,\beta) \in (\frac{3}{4},1)\times(1-\alpha,
\min{\{3\alpha-2,2-2\alpha\}})]$.

Then the following inequality holds true
\begin{align}
\|G(t)\|_{L^{2}}^{2}+\int_{0}^{t}\|G(\tau)\|_{\dot{H}^{\frac{\alpha}{2}}}^{2}\, d\tau \leq B(t)
\end{align}
where $B(t)$ is a smooth function of $t$ depending on the initial data only.
\end{theorem}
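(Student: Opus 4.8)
The plan is to perform an $L^2$ energy estimate on the equation \eqref{equation for G} for $G$, namely
\begin{align*}
\partial_{t}G + u\cdot \nabla G + \Lambda^{\alpha} G = [\mathcal{R}_{\alpha},u\cdot \nabla]\theta + \Lambda^{\beta}\mathcal{R}_{\alpha}\theta.
\end{align*}
Testing against $G$ and using the divergence-free condition to kill the transport term, one gets
\begin{align*}
\frac{1}{2}\frac{d}{dt}\|G\|_{L^{2}}^{2} + \|\Lambda^{\alpha/2}G\|_{L^{2}}^{2} = \int [\mathcal{R}_{\alpha},u\cdot\nabla]\theta\, G\, dx + \int \Lambda^{\beta}\mathcal{R}_{\alpha}\theta\, G\, dx.
\end{align*}
The second term is the easier one: $\Lambda^{\beta}\mathcal{R}_{\alpha} = \Lambda^{1+\beta-\alpha}\mathcal{R}$, so one pairs it in dual Sobolev spaces $\dot H^{\alpha/2}$ and $\dot H^{-\alpha/2}$, bounding it by $\|\theta\|_{\dot H^{1+\beta-\alpha+\alpha/2}}\|G\|_{\dot H^{\alpha/2}}$ up to a harmless low-frequency piece handled via $\|\theta\|_{L^2}$; then Young's inequality absorbs the $\|G\|_{\dot H^{\alpha/2}}$ factor into the dissipation. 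The regularity $1+\beta-\alpha+\alpha/2 = 1+\beta-\alpha/2$ on $\theta$ is controlled in $L^2_t$ by Lemma \ref{Lp} / Lemma \ref{expo} provided $\beta$ is not too large relative to the smoothing gained from $\Lambda^\beta$; this is one source of the constraint $\beta < 2-2\alpha$ (so that $1+\beta-\alpha/2 \le \beta/2 + (1-\alpha) + \text{something}$, i.e. the time-integrability of that $\theta$-norm is available from the temperature equation with initial data in $H^{1-\alpha}$).

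For the commutator term I would invoke Theorem \ref{crucial commutators}(1) with $\sigma = 0$: choosing $s$ slightly negative (or pairing via duality against $G$ in an appropriate negative Besov/Sobolev index paid for by $\|G\|_{\dot H^{\alpha/2}}$), it gives
\begin{align*}
\|[\mathcal{R}_{\alpha},u\cdot\nabla]\theta\|_{B_{2,2}^{s}} \lesssim \|u\|_{\dot B_{2,\infty}^{1-\epsilon}}\left(\|\theta\|_{B_{\infty,2}^{s+1+\epsilon-\alpha}} + \|\theta\|_{L^\infty}\right).
\end{align*}
Since $u = \nabla^{\perp}\Delta^{-1}(\log(I-\Delta))^{\gamma}\omega$ and $\omega = G + \mathcal{R}_\alpha\theta$, the norm $\|u\|_{\dot B_{2,\infty}^{1-\epsilon}}$ is essentially $\|\omega\|_{\dot B_{2,\infty}^{-\epsilon}}$ modulo the logarithm, which is bounded by $\|G\|_{L^2} + \|\mathcal{R}_\alpha\theta\|_{L^2} \lesssim \|G\|_{L^2} + \|\theta\|_{\dot H^{1-\alpha}}$ — and $\|\theta\|_{\dot H^{1-\alpha}}$ is controlled by $\|\theta_0\|_{\dot H^{1-\alpha}}$ via the temperature equation. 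The $\theta$-factor $\|\theta\|_{B_{\infty,2}^{s+1+\epsilon-\alpha}}$, together with $\|\theta\|_{L^\infty} \le \|\theta_0\|_{L^\infty}$ from Lemma \ref{Lpestimate}, must be bounded in the right mixed-time norm so that after Young's inequality the whole right-hand side reads $C(t)\|G\|_{L^2}^2 + \frac{1}{2}\|\Lambda^{\alpha/2}G\|_{L^2}^2 + (\text{time-integrable forcing})$; here is where the bound $\beta < 3\alpha - 2$ enters, being the condition that the $\theta$-regularity $\dot H^{1-\alpha+\beta}$ available in $L^1_t$ (from Lemma \ref{expo} applied to the temperature equation) dominates the index demanded by the commutator estimate.

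Having reduced to a differential inequality of the form $\frac{d}{dt}\|G\|_{L^2}^2 + \|\Lambda^{\alpha/2}G\|_{L^2}^2 \le C(t)\|G\|_{L^2}^2 + h(t)$ with $C, h \in L^1_{\mathrm{loc}}$ depending only on the initial data (through $\|\theta_0\|_{L^2\cap L^\infty}$, $\|\theta_0\|_{H^{1-\alpha}}$ and the a priori temperature bounds), Grönwall's inequality finishes the proof and produces the smooth function $B(t)$. The main obstacle I anticipate is bookkeeping the regularity/time-integrability budget for $\theta$: every term involving $\theta$ must be placed in a space that the temperature equation actually controls globally in time — using $\|\theta\|_{L^\infty_t L^2}$, $\|\theta\|_{L^m_t \dot H^{\beta/m}}$ from Lemma \ref{Lp}, and $\|\theta\|_{L^1_t \dot H^{1-\alpha+\beta}}$-type bounds from Lemma \ref{expo} — and it is precisely the simultaneous satisfiability of all these index inequalities (commutator demands versus temperature smoothing) that forces $\beta \in (1-\alpha, \min\{3\alpha-2, 2-2\alpha\}]$ and $\alpha$ close to $1$. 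A secondary technical point is the logarithmic factor $(\log(I-\Delta))^\gamma$ in the Biot–Savart law: it is mild and only costs a factor $(1+|j|)^\gamma$ at frequency $2^j$, absorbed by summing against a slightly-better Besov index (the generalized Besov spaces $B^{s,\gamma}_{p,q}$), so it does not affect the range of exponents but must be carried along in the estimates.
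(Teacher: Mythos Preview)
Your overall strategy --- $L^2$ energy estimate on \eqref{equation for G}, split the right-hand side into the commutator and the $\Lambda^{\beta}\mathcal{R}_{\alpha}\theta$ term, then Gronwall --- matches the paper. However, the way you propose to close the commutator term contains a genuine circularity that the paper's proof is specifically engineered to avoid.

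You invoke Theorem~\ref{crucial commutators}(1), which bounds $\|[\mathcal{R}_\alpha,u\cdot\nabla]\theta\|$ in terms of $\|u\|_{\dot B^{1-\epsilon}_{2,\infty}}$, and then estimate the latter via $\omega = G + \mathcal{R}_\alpha\theta$ by $\|G\|_{L^2} + \|\theta\|_{\dot H^{1-\alpha}}$. The problem is that $\|\theta(t)\|_{\dot H^{1-\alpha}}$ is \emph{not} available a~priori at this stage: Lemma~\ref{Lp} only gives $\|\theta\|_{L^m_t\dot H^{\beta/m}}$, and since $\beta\le 2-2\alpha$ in the hypothesis one has $\beta/2\le 1-\alpha$, so $L^2_t\dot H^{1-\alpha}$ is out of reach except at the boundary. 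Appealing to Lemma~\ref{expo} for $\|\theta\|_{L^\infty_t H^{1-\alpha}}$ or $\|\theta\|_{L^1_t\dot H^{1-\alpha+\beta}}$ is circular, because Lemma~\ref{expo} requires $\int_0^t\|\nabla u\|_{L^\infty}<\infty$, which is established only much later in the bootstrap (Subsection~4.6). The paper's fix is to use part~(2) of Theorem~\ref{crucial commutators}, which is tailored to the structure $u=\nabla^\perp\Delta^{-1}(\log(I-\Delta))^\gamma(G+\mathcal{R}_\alpha\theta)$: writing the commutator as $\text{div}[\mathcal{R}_\alpha,u]\theta$ and applying part~(2) with $s=1-\alpha/2$, the contribution coming from the $\mathcal{R}_\alpha\theta$ piece of $u$ lands on $\|\theta\|_{L^\infty}\|\theta\|_{H^{2+\epsilon-5\alpha/2}}$ rather than on $\|\theta\|_{\dot H^{1-\alpha}}$. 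The index $2-\tfrac{5}{2}\alpha$ is strictly below $1-\alpha$ for $\alpha>\tfrac{2}{3}$, and for $\alpha>\tfrac{3}{4}$ one has $2-\tfrac{5}{2}\alpha<\tfrac{\beta}{2}$, so this \emph{is} controlled in $L^2_t$ by Lemma~\ref{Lp}. That extra gain of $\alpha$ derivatives, coming from the second $\mathcal{R}_\alpha$ hidden inside $u$, is the whole point of part~(2) and is what makes the estimate close.

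A second, smaller issue: your handling of the term $II=\int\Lambda^{\beta}\mathcal{R}_\alpha\theta\,G\,dx$ is off. Pairing at $\dot H^{\alpha/2}$ puts $\theta$ in $\dot H^{1+\beta-3\alpha/2}$ (not $\dot H^{1+\beta-\alpha/2}$), and the paper does not fix this pairing but introduces a free parameter $s_1\in[0,\alpha/2]$, interpolates $\|G\|_{\dot H^{s_1}}$ between $L^2$ and $\dot H^{\alpha/2}$, and then chooses $s_1=\alpha/2$ for $\alpha\in(\tfrac34,\tfrac45]$ and $s_1=2-2\alpha$ for $\alpha\in(\tfrac45,1)$. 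It is this choice of $s_1$ (not the commutator term) that produces the two constraints $\beta\le 3\alpha-2$ and $\beta\le 2-2\alpha$ respectively; the commutator term is responsible only for the lower bound $\alpha>\tfrac34$.
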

\begin{proof}
Multiplying $G$ to equation (\ref{equation for G}) and integrating with spatial variable, we obtain
\begin{align*}
\frac{1}{2}\frac{d}{dt}\|G\|_{L^{2}}^{2} + \|\Lambda^{\frac{\alpha}{2}}G\|_{L^{2}}^{2} & =
\int [\mathcal{R}_{\alpha},u\cdot \nabla]\theta \, G \, dx + \int \Lambda^{\beta-\alpha} \partial_{1}\theta \, G \, dx \\
& = I+ II
\end{align*}
For I, we have
\begin{align*}
|I| & = \left| \int \text{div} [\mathcal{R}_{\alpha},u]\theta \, G \, dx \right| \\
& \leq \|\Lambda^{\frac{\alpha}{2}}G\|_{L^{2}} \|[\mathcal{R}_{\alpha},u]\theta\|_{\dot{H}^{1-\frac{\alpha}{2}}}.
\end{align*}
Choosing $p_{3}>\frac{2}{\frac{3}{2}\alpha -1 -\epsilon}$ and using Theorem \ref{crucial commutators},
we obtain
\begin{align*}
& \|[\mathcal{R}_{\alpha},u]\theta\|_{\dot{H}^{1-\frac{\alpha}{2}}} \\
\lesssim & \|G\|_{L^{2}} \|\theta\|_{B_{\infty,2}^{1+\epsilon-\frac{3}{2}\alpha}} + \|\theta\|_{L^{\infty}}
\|\theta\|_{H^{2+\epsilon-\frac{5}{2}\alpha}}+\|G\|_{L^{2}}\|\theta\|_{L^{2}} + \|\theta\|_{L^{2}}\|\theta\|_{L^{2}}   \\
\lesssim & \|G\|_{L^{2}} \|\theta\|_{L^{p_{3}}} + \|\theta\|_{L^{\infty}}
\|\theta\|_{H^{2+\epsilon-\frac{5}{2}\alpha}}+\|G\|_{L^{2}}\|\theta\|_{L^{2}} + \|\theta\|_{L^{2}}\|\theta\|_{L^{2}},
\end{align*}
where $\epsilon>0$ is an arbitrary small number.
For II, choosing $s_{1}\in [0,\frac{\alpha}{2}]$, we have
\begin{align*}
|II| \leq \|\Lambda^{s_{1}}G\|_{L^{2}}\|\Lambda^{1+\beta -\alpha -s_{1}}\theta\|_{L^{2}}.
\end{align*}
From above statements, we can obtain
\begin{align*}
& \frac{1}{2}\frac{d}{dt}\|G\|_{L^{2}}^{2}+\|\Lambda^{\frac{\alpha}{2}}G\|_{L^{2}}^{2} \\
\lesssim & \|G\|_{L^{2}} \|\theta\|_{L^{p_{3}}} + \|\theta\|_{L^{\infty}} \|\theta\|_{H^{2+\epsilon-\frac{5}{2}\alpha}} \\
& + \|G\|_{L^{2}}\|\theta\|_{L^{2}} + \|\theta\|_{L^{2}}\|\theta\|_{L^{2}} +
\|\Lambda^{s_{1}}G\|_{L^{2}}\|\theta\|_{\dot{H}^{1+\beta-\alpha-s_{1}}}
\end{align*}
From interpolation inequality and Young's inequality, we obtain
\begin{align*}
& \|\theta\|_{\dot{H}^{1+\beta -\alpha -s_{1}}} \|G\|_{\dot{H}^{s_{1}}}   \\
\leq & C \|\theta\|_{\dot{H}^{1+\beta -\alpha -s_{1}}}\|G\|_{\dot{H}^{\frac{\alpha}{2}}}^{\frac{2s_{1}}{\alpha}}
\|G\|_{L^{2}}^{1-\frac{2s_{1}}{\alpha}} \\
\leq & C \|\theta\|_{\dot{H}^{1+\beta-\alpha-s_{1}}}^{2} + C \|G\|_{L^{2}}^{2} + \frac{1}{4} \|G\|_{\dot{H}^{\frac{\alpha}{2}}}^{2}.
\end{align*}
Using Young's inequality and the above inequality, we have
\begin{align*}
& \frac{d}{dt}\|G\|_{L^{2}}^{2}+\|\Lambda^{\frac{\alpha}{2}}G\|_{L^{2}}^{2} \\
\leq & C \|G\|_{L^{2}}^{2} + C\|\theta_{0}\|_{L^{2}\cap L^{\infty}} + C \|\theta\|_{H^{2+\epsilon -\frac{5}{2}\alpha}}^{2}
+ C\|\theta\|_{\dot{H}^{1+\beta-\alpha-s_{1}}}^{2}.
\end{align*}
Gronwall's inequality thus leads to
\begin{align*}
& \|G(t)\|_{L^{2}}^{2}+\int_{0}^{t}\|\Lambda^{\frac{\alpha}{2}}G(\tau)\|_{L^{2}}^{2}\, d\tau  \\
\leq & C e^{Ct} \left( t+\|\theta\|_{L_{t}^{2}H^{2+\epsilon-\frac{5}{2}\alpha}}^{2}+\|\theta\|^{2}_{L_{t}^{2}
\dot{H}^{1+\beta-\alpha-s_{1}}} \right).
\end{align*}
If $\frac{3}{4} < \alpha \leq \frac{4}{5}$, for $1-\alpha < \beta \leq 3\alpha -2$,
then clearly for $\epsilon>0$ small enough we have
\begin{align*}
& 0\leq 1+\beta-\frac{3}{2}\alpha \leq \frac{\beta}{2}, \\
& 0 \leq 2+\epsilon-\frac{5}{2}\alpha \leq \frac{\beta}{2},
\end{align*}
Using Lemma \ref{Lp} and interpolation inequality we easily get
\begin{align*}
\|\theta\|_{L_{t}^{2}H^{2-\frac{5}{2}\alpha+\epsilon}}^{2} +
\|\theta\|_{L_{t}^{2}\dot{H}^{1+\beta-\frac{3}{2}\alpha}}^{2}\lesssim 1+t
\end{align*}
If $\frac{4}{5} < \alpha < 1$, we choose $s_{1} = 2-2\alpha \in (0,\frac{\alpha}{2})$,
and for $1-\alpha < \beta \leq 2-2\alpha$, then
\begin{align*}
0\leq \beta-1+\alpha \leq \frac{\beta}{2}.
\end{align*}
Using Lemma \ref{Lp} and interpolation inequality we get
\begin{align*}
\|\theta\|_{L_{t}^{2}\dot{H}^{\beta-1+\alpha}}^{2} + \|\theta\|_{L_{t}^{2}H^{2 -\frac{5}{2}\alpha +\epsilon}}^{2}\lesssim 1+t.
\end{align*}
Hence, the proof is complete.
\end{proof}

\subsection{Estimation of $\|G\|_{L^{q}}$ for $q$ in suitable range}

This subsection presents the estimate of $\|G\|_{L^{q}}$ for $q$ in some suitable range. Before the main theorem, we need two Lemmas \cite{TMPATGAFTDQE,OTGWPOACOBNSS}.
\begin{lemma}\label{fractional inequality}
Suppose that $s\in [0,1]$, and $f$, $(-\Delta)^{^{s}}f \in L^{p} (\mathbb{R}^{2})$, $p\geq 2$. Then
\begin{align*}
\int_{\mathbb{R}^{2}} |f|^{p-2} f (-\Delta)^{s} f \, dx \geq \frac{2}{p} \int_{\mathbb{R}^{2}} ((-\Delta)^{\frac{s}{2}}|f|^{\frac{p}{2}})^{2} \, dx
\end{align*}
\end{lemma}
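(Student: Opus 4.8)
The plan is to reduce the inequality to an elementary pointwise inequality for real numbers by means of the singular integral representation of the fractional Laplacian, after disposing of the two endpoint exponents $s\in\{0,1\}$ directly. For $s=0$ the operator $(-\Delta)^{0}$ is the identity, so the claim is $\int_{\mathbb{R}^{2}}|f|^{p}\,dx\ge\frac{2}{p}\int_{\mathbb{R}^{2}}|f|^{p}\,dx$, which holds since $p\ge 2$. For $s=1$ one integrates by parts: $\int |f|^{p-2}f(-\Delta f)\,dx=(p-1)\int |f|^{p-2}|\nabla f|^{2}\,dx$, while $|\nabla|f|^{p/2}|^{2}=\frac{p^{2}}{4}|f|^{p-2}|\nabla f|^{2}$ almost everywhere, so the right-hand side equals $\frac{p}{2}\int|f|^{p-2}|\nabla f|^{2}\,dx$, and $p-1\ge\frac{p}{2}$ for $p\ge2$.

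For $0<s<1$ I would use $(-\Delta)^{s}f(x)=c_{s}\,\mathrm{p.v.}\!\int_{\mathbb{R}^{2}}\frac{f(x)-f(y)}{|x-y|^{2+2s}}\,dy$ with $c_{s}>0$. Setting $g:=|f|^{p/2}$ and symmetrizing the resulting double integrals in the variables $x$ and $y$ gives
\[
\int_{\mathbb{R}^{2}}|f|^{p-2}f(-\Delta)^{s}f\,dx=\frac{c_{s}}{2}\iint\frac{\bigl(|f(x)|^{p-2}f(x)-|f(y)|^{p-2}f(y)\bigr)\bigl(f(x)-f(y)\bigr)}{|x-y|^{2+2s}}\,dx\,dy,
\]
\[
\int_{\mathbb{R}^{2}}\bigl((-\Delta)^{s/2}g\bigr)^{2}\,dx=\int_{\mathbb{R}^{2}}g(-\Delta)^{s}g\,dx=\frac{c_{s}}{2}\iint\frac{\bigl(|f(x)|^{p/2}-|f(y)|^{p/2}\bigr)^{2}}{|x-y|^{2+2s}}\,dx\,dy.
\]
Thus it suffices to prove the pointwise algebraic inequality
\[
\bigl(|a|^{p-2}a-|b|^{p-2}b\bigr)(a-b)\ \ge\ \frac{2}{p}\bigl(|a|^{p/2}-|b|^{p/2}\bigr)^{2}\qquad\text{for all }a,b\in\mathbb{R}.
\]

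To prove this I would split according to the signs of $a$ and $b$. If $ab\le 0$, say $a\ge0\ge b$, the left side equals $(a^{p-1}+|b|^{p-1})(a+|b|)\ge a^{p}+|b|^{p}$, while $(|a|^{p/2}-|b|^{p/2})^{2}\le a^{p}+|b|^{p}$ and $\frac{2}{p}\le1$, so the inequality is clear. If $a,b$ have the same sign then by symmetry assume $a\ge b\ge0$; the case $b=0$ is immediate, and for $a\ge b>0$ I would write $a^{p-1}-b^{p-1}=(p-1)\int_{b}^{a}r^{p-2}\,dr$ and $a^{p/2}-b^{p/2}=\frac{p}{2}\int_{b}^{a}r^{p/2-1}\,dr$, apply Cauchy--Schwarz to the second to get $(a^{p/2}-b^{p/2})^{2}\le\frac{p^{2}}{4}(a-b)\int_{b}^{a}r^{p-2}\,dr$, and conclude with $p-1\ge\frac{p}{2}$. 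Plugging back, the integrand in the symmetrized expression above is nonnegative, which yields the lemma.

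The only point requiring care is justifying the symmetrization and the use of Fubini under the stated hypotheses $f,(-\Delta)^{s}f\in L^{p}(\mathbb{R}^{2})$; these steps are transparent for Schwartz functions, and the general case follows by a standard density argument (mollification and truncation) together with the lower semicontinuity of the right-hand side, so I expect this to be routine. The substantive content is the same-sign case of the algebraic inequality, and the Cauchy--Schwarz trick disposes of it in one line; I do not anticipate a genuine obstacle.
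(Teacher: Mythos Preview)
The paper does not prove this lemma at all: it merely cites it from the literature (Ju \cite{TMPATGAFTDQE} and Miao--Xue \cite{OTGWPOACOBNSS}), so there is no ``paper's own proof'' to compare against. Your argument is a correct, self-contained proof and is essentially the standard one going back to C\'ordoba--C\'ordoba and Ju: the endpoint cases $s=0,1$ are disposed of directly, and for $s\in(0,1)$ the singular-integral representation of $(-\Delta)^{s}$ reduces the claim to the pointwise inequality $(|a|^{p-2}a-|b|^{p-2}b)(a-b)\ge\frac{2}{p}(|a|^{p/2}-|b|^{p/2})^{2}$, which you verify cleanly via the Cauchy--Schwarz trick in the same-sign case and by a crude bound in the opposite-sign case. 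The only delicate point you flag---justifying Fubini and symmetrization under the bare hypothesis $f,(-\Delta)^{s}f\in L^{p}$---is indeed routine by mollification and Fatou, exactly as you say.
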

\begin{lemma}\label{iequality}
Let $\gamma \in [2,\infty)$, $s\in (0,1)$, $\alpha \in (\frac{\gamma -4}{\gamma -2},2)$. Then for every smooth function $f$
we have
\begin{align*}
\||f|^{\gamma-2}f\|_{\dot{H}^{s}} \lesssim \|f\|_{L^{\frac{2\gamma}{2-\alpha}}}^{\gamma -2}\|f\|_{\dot{H}^{s+(\frac{d}{2}-\frac{d}{\gamma})(2-\alpha)}}.
\end{align*}
\end{lemma}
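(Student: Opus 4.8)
The plan is to exploit the restriction $s\in(0,1)$ and argue directly with the Gagliardo--Slobodeckij realization of $\dot{H}^{s}=\dot{B}^{s}_{2,2}$ recorded in Section~2, namely $\|g\|_{\dot H^s}^2\simeq\int_{\mathbb R^d}\int_{\mathbb R^d}|g(x)-g(y)|^2|x-y|^{-d-2s}\,dx\,dy$. Write $F(t):=|t|^{\gamma-2}t$; for $\gamma\ge 2$ one has $F\in C^1(\mathbb R)$ with $|F'(t)|=(\gamma-1)|t|^{\gamma-2}$, so the fundamental theorem of calculus gives the elementary pointwise bound $|F(a)-F(b)|\le(\gamma-1)\big(|a|^{\gamma-2}+|b|^{\gamma-2}\big)|a-b|$. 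Taking $a=f(x)$, $b=f(y)$, squaring, using $(|a|^{\gamma-2}+|b|^{\gamma-2})^2\lesssim|a|^{2(\gamma-2)}+|b|^{2(\gamma-2)}$ and the symmetry of the kernel in $(x,y)$, I reduce the claim to bounding
\[
J:=\int_{\mathbb R^d}|f(x)|^{2(\gamma-2)}\Big(\int_{\mathbb R^d}\frac{|f(x)-f(y)|^2}{|x-y|^{d+2s}}\,dy\Big)\,dx .
\]
The case $\gamma=2$ is trivial (both sides equal $\|f\|_{\dot H^s}$), so from now on assume $\gamma>2$.

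Set $h(x):=\big(\int_{\mathbb R^d}|f(x)-f(y)|^2|x-y|^{-d-2s}\,dy\big)^{1/2}$, so that $J=\int|f|^{2(\gamma-2)}h^2$. I would then apply H\"older in $x$ with exponents $a,a'$ fixed by $2(\gamma-2)a=p_1:=\tfrac{2\gamma}{2-\alpha}$; this forces $2a'=r$ with $\tfrac1r=\tfrac12-\tfrac{(\gamma-2)(2-\alpha)}{2\gamma}$, and a short computation shows $a,a'\in(1,\infty)$ — equivalently $2\le r<\infty$ — precisely under the hypothesis $\alpha\in(\tfrac{\gamma-4}{\gamma-2},2)$. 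This yields $J\le C\|f\|_{L^{p_1}}^{2(\gamma-2)}\|h\|_{L^r}^2$. Next I invoke the classical fact that, for $1<r<\infty$ and $0<s<1$, the square function $h$ is controlled by fractional differentiation, $\|h\|_{L^r}\lesssim\|\Lambda^s f\|_{L^r}$ (the $L^r$-version, for $r\neq 2$, of the Gagliardo identity quoted above), and finish with the Sobolev inequality $\|\Lambda^s f\|_{L^r}=\|\Lambda^{-(s'-s)}(\Lambda^{s'}f)\|_{L^r}\lesssim\|\Lambda^{s'}f\|_{L^2}=\|f\|_{\dot{H}^{s'}}$, where $s':=s+(\tfrac d2-\tfrac d\gamma)(2-\alpha)$. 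This embedding is legitimate exactly because the exponent relation $\tfrac1r=\tfrac12-\tfrac{s'-s}{d}$ holds (one checks $\tfrac{(\gamma-2)(2-\alpha)}{2\gamma}=\tfrac1d(\tfrac d2-\tfrac d\gamma)(2-\alpha)$) and $0\le s'-s<\tfrac d2$. Chaining the three estimates and taking square roots gives the claimed bound.

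The pointwise bound, the symmetrization and the exponent bookkeeping are all routine; the one genuine external ingredient is the square-function control of $\|h\|_{L^r}$ in the penultimate step, and the point to watch is that the admissible range forced there and in the H\"older step ($2\le r<\infty$) coincides \emph{exactly} with the range $\alpha\in(\tfrac{\gamma-4}{\gamma-2},2)$ in the statement — which is precisely why the hypothesis takes that form. A fully equivalent alternative, which compresses the last two steps into a single citation, is to apply the fractional chain rule $\|\Lambda^s(F(f))\|_{L^2}\lesssim\|F'(f)\|_{L^{p_1}}\|\Lambda^s f\|_{L^r}$ with the same $p_1,r$ and then use $\|F'(f)\|_{L^{p_1}}=(\gamma-1)\|f\|_{L^{(\gamma-2)p_1}}^{\gamma-2}$ together with the same Sobolev embedding; I would present whichever route is shorter in light of the references already attached to this lemma.
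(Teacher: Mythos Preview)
The paper does not supply a proof of this lemma; it is simply quoted from \cite{OTGWPOACOBNSS}, so there is nothing in the present text to compare against directly.

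Your argument is correct. The pointwise estimate $|F(a)-F(b)|\lesssim(|a|^{\gamma-2}+|b|^{\gamma-2})|a-b|$ together with the Gagliardo form of $\dot H^s$ reduces the question to bounding $J$; your H\"older split with $2(\gamma-2)a=p_1=\tfrac{2\gamma}{2-\alpha}$ is admissible precisely on the stated range of $\alpha$ (your check that $a,a'\in(1,\infty)$ iff $\alpha\in(\tfrac{\gamma-4}{\gamma-2},2)$ is accurate), and the exponent bookkeeping for the final Sobolev step $\dot H^{s'}\hookrightarrow \dot H^{s,r}$ is correct, including the verification that $s'-s<\tfrac d2$ follows from the same constraint. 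The one nontrivial external input you flag, namely $\|h\|_{L^r}\lesssim\|\Lambda^s f\|_{L^r}$ for $0<s<1$ and $1<r<\infty$, is standard: it is the identification $\dot F^s_{r,2}=\dot H^{s,r}$ combined with the first-difference characterization of Triebel--Lizorkin spaces. The alternative packaging via the Kato--Ponce/Christ--Weinstein fractional chain rule that you sketch at the end is equally valid and is the form in which this estimate is most often cited; either presentation is acceptable here.
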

The main theorem in this subsection can be stated as follows.
\begin{theorem}\label{estimateGq}
Consider (\ref{GB}) with $\sigma =0$ and $\gamma \geq 0$.
Assume that $(\omega_{0},\theta_{0})$ satisfies the conditions in Theorem $\ref{global result}$.
Let $(\omega,\theta)$ be the corresponding solution of (\ref{GB}), $G$ is defined as in (\ref{equation for G}).
If $\alpha$, $\beta$ satisfies
\begin{align*}
(\alpha,\beta) \in \left( \frac{9q-12}{8q-8},1 \right) \times
\left(1-\alpha, \min{\{ 2-2\alpha, \frac{5q-4}{3q-4}\alpha -2, \frac{1-\alpha}{\frac{4}{\alpha}(1-\frac{1}{q})-2} \}}\right)
\end{align*}
for some $q \in [2, \frac{20}{9})$.
Then for every $\widetilde{q} \in [2,q]$, we have for every $t \in \mathbb{R}^{+}$
\begin{align}
\|G(t)\|_{L^{\widetilde{q}}}^{\widetilde{q}} + \int_{0}^{t} \|G(\tau)\|_{L^{\frac{2\widetilde{q}}{2-\alpha}}}^{\widetilde{q}} \, d\tau
\leq B(t).
\end{align}
\end{theorem}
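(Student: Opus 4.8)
The plan is to run a standard $L^{\widetilde q}$-energy estimate on the equation
\[
\partial_t G + u\cdot\nabla G + \Lambda^{\alpha}G = [\mathcal R_\alpha,u\cdot\nabla]\theta + \Lambda^{\beta}\mathcal R_\alpha\theta =: F_1 + F_2,
\]
by multiplying by $|G|^{\widetilde q-2}G$ and integrating. The transport term drops by the divergence-free condition, and Lemma~\ref{fractional inequality} converts the dissipative contribution into $\tfrac{2}{\widetilde q}\|\Lambda^{\alpha/2}|G|^{\widetilde q/2}\|_{L^2}^2$; then the fractional Gagliardo--Nirenberg--Sobolev inequality in $\mathbb R^2$ gives $\|\,|G|^{\widetilde q/2}\|_{\dot H^{\alpha/2}}^2 \gtrsim \|\,|G|^{\widetilde q/2}\|_{L^{2/(1-\alpha/2)}}^2 = \|G\|_{L^{2\widetilde q/(2-\alpha)}}^{\widetilde q}$ up to lower-order terms, which is exactly the gain recorded in the statement. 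So the inequality reduces to estimating $\int F_i\,|G|^{\widetilde q-2}G\,dx$ for $i=1,2$, absorbing a fraction of the coercive term $\|G\|_{L^{2\widetilde q/(2-\alpha)}}^{\widetilde q}$, and closing by Gronwall using the bounds already available: $\|G\|_{L_t^\infty L^2}$ and $\|G\|_{L_t^2\dot H^{\alpha/2}}$ from Theorem~\ref{GL2}, $\|\theta(t)\|_{L^p}\le\|\theta_0\|_{L^p}$ and $\|\theta\|_{L_t^m\dot H^{\beta/m}}\lesssim\|\theta_0\|_{L^2}$ from Lemma~\ref{Lp}, and $\|\theta\|_{L_t^\infty(H^{1-\alpha}\cap B^{1-\alpha+\epsilon}_{\infty,1})}$ with $\|\theta\|_{L_t^1(H^{1-\alpha+\beta}\cap\cdots)}$ which is propagated alongside.

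For the linear diffusion-type term $F_2 = \Lambda^{\beta-\alpha}\partial_1\theta$, I would bound $|\int F_2\,|G|^{\widetilde q-2}G\,dx|$ by Hölder as $\|\Lambda^{1+\beta-\alpha}\theta\|_{L^{r}}\,\|\,|G|^{\widetilde q-1}\|_{L^{r'}}$ with $\|\,|G|^{\widetilde q-1}\|_{L^{r'}} = \|G\|_{L^{(\widetilde q-1)r'}}^{\widetilde q-1}$, choosing $r$ so that $(\widetilde q-1)r' = \tfrac{2\widetilde q}{2-\alpha}$, i.e.\ $\tfrac{1}{r} = 1 - \tfrac{(2-\alpha)(\widetilde q-1)}{2\widetilde q}$; then Young's inequality with exponents $(\widetilde q/(\widetilde q-1),\widetilde q)$ peels off $\tfrac14\|G\|_{L^{2\widetilde q/(2-\alpha)}}^{\widetilde q}$ plus a term $\|\Lambda^{1+\beta-\alpha}\theta\|_{L^r}^{\widetilde q}$; interpolating $\Lambda^{1+\beta-\alpha}\theta$ between the $L^2$ information coming from Lemma~\ref{Lp} (which controls $\dot H^{\beta/m}$ norms in $L^2_tL^2$ and $L^\infty_tL^2$) and an $L^\infty$-type Besov bound on $\theta$, one checks the resulting time exponent is integrable precisely under the constraint $\beta \le \tfrac{1-\alpha}{(4/\alpha)(1-1/q)-2}$. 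For the commutator term $F_1 = [\mathcal R_\alpha,u\cdot\nabla]\theta = \operatorname{div}[\mathcal R_\alpha,u]\theta$, I would move the divergence onto $|G|^{\widetilde q-2}G$ at the price of $\|\nabla(|G|^{\widetilde q-2}G)\|$, or more cleanly pair $[\mathcal R_\alpha,u]\theta$ against $\nabla(|G|^{\widetilde q-2}G)$ and use $\|\nabla|G|^{\widetilde q/2}\|_{L^2}\sim\|G\|_{\dot H^{\alpha/2}\text{-type}}$; then invoke Theorem~\ref{crucial commutators}(2) (with $\sigma=0$) to bound $\|[\mathcal R_\alpha,u]\theta\|_{H^s}$ by $\|G\|_{L^2}\|\theta\|_{B^{s+\epsilon-\alpha}_{\infty,2}} + \|\theta\|_{L^\infty}\|\theta\|_{H^{s+1+\epsilon-2\alpha}} + \|G\|_{L^2}\|\theta\|_{L^2} + \|\theta\|_{L^2}^2$, choosing $s$ (around $1-\alpha/2$-ish, adjusted for the $L^{\widetilde q}$ rather than $L^2$ framework) so each factor is controlled. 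This is where the remaining two constraints $\beta \le \tfrac{5q-4}{3q-4}\alpha-2$ and $\beta \le 2-2\alpha$ enter, from requiring the Sobolev indices $s+1+\epsilon-2\alpha$ and $s+\epsilon-\alpha$ to fall inside the range $[0,\beta/m]$ on which $\|\theta\|_{L_t^m\dot H^{\cdot}}$ is known to be finite.

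The main obstacle I anticipate is exactly this bookkeeping of exponents in the commutator term: the $L^{\widetilde q}$-energy method forces working with $\|G\|_{L^{\widetilde q}}$ and $\|G\|_{L^{2\widetilde q/(2-\alpha)}}$ rather than $L^2$ norms, so Theorem~\ref{crucial commutators}(2) (stated in $H^s=B^s_{2,2}$) cannot be applied verbatim — one needs either an $L^{\widetilde q}$-version of that commutator estimate (proved by the same Bony decomposition with $L^2$ replaced by $L^{\widetilde q}$ and $G$-factors estimated in $L^2$ via the embedding, using $\widetilde q<\tfrac{20}{9}$ so that $\tfrac{2\widetilde q}{2-\alpha}$ stays in a workable range), or a careful Hölder splitting of $\int[\mathcal R_\alpha,u]\theta\cdot\nabla(|G|^{\widetilde q-2}G)\,dx$ into pieces each matched to an available norm. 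Once the three auxiliary inequalities $1+\beta-\tfrac32\alpha\le\tfrac\beta2$-type conditions (now $q$-dependent) are verified under the hypothesis on $(\alpha,\beta)$, Lemma~\ref{Lp} and interpolation give the needed time-integrability, and Gronwall closes the estimate, yielding $B(t)$ smooth in $t$ and depending only on the initial data. The case $\widetilde q<q$ follows because the hypothesis region for $q$ is contained in that for every smaller $\widetilde q$, so one may simply run the argument with $\widetilde q$ in place of $q$.
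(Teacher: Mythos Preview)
Your overall strategy is right, but there is a genuine missing ingredient that resolves exactly the obstacle you flag at the end. You correctly observe that the commutator bound of Theorem~\ref{crucial commutators}(2) lives in $H^{s}$ while the energy method puts you in an $L^{\widetilde q}$ framework; your proposed fixes (either proving an $L^{\widetilde q}$-version of the commutator estimate, or moving a full divergence onto $|G|^{\widetilde q-2}G$) are, respectively, unnecessary and too costly. The second option fails outright: the dissipation only furnishes control of $\|\Lambda^{\alpha/2}|G|^{\widetilde q/2}\|_{L^{2}}$, not $\|\nabla|G|^{\widetilde q/2}\|_{L^{2}}$, so pairing against $\nabla(|G|^{\widetilde q-2}G)$ loses $1-\alpha/2$ derivatives you cannot recover.

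The correct bridge is the fractional chain-rule estimate (Lemma~\ref{iequality} in the paper), which gives
\[
\big\||G|^{\widetilde q-2}G\big\|_{\dot H^{s_{i}}} \lesssim \|G\|_{L^{2\widetilde q/(2-\alpha)}}^{\widetilde q-2}\,\|G\|_{\dot H^{\,s_{i}+(1-2/\widetilde q)(2-\alpha)}}.
\]
With this in hand one uses the $\dot H^{1-s_{2}}$--$\dot H^{s_{2}}$ duality (after writing $F_{1}=\operatorname{div}[\mathcal R_{\alpha},u]\theta$) to get
\[
\Big|\int F_{1}\,|G|^{\widetilde q-2}G\,dx\Big|\le \|[\mathcal R_{\alpha},u]\theta\|_{\dot H^{1-s_{2}}}\,\big\||G|^{\widetilde q-2}G\big\|_{\dot H^{s_{2}}},
\]
and the same for $F_{2}$ with an exponent $s_{3}\le s_{2}$. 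Now Theorem~\ref{crucial commutators}(2) applies verbatim in $H^{1-s_{2}}$, and choosing $s_{2}=\tfrac{\alpha}{2}-(1-\tfrac{2}{\widetilde q})(2-\alpha)$ makes the $G$-factor on the right exactly $\|G\|_{\dot H^{\alpha/2}}$, which is already controlled in $L^{2}_{t}$ by Theorem~\ref{GL2}. Young's inequality with three factors then absorbs the $\|G\|_{L^{2\widetilde q/(2-\alpha)}}^{\widetilde q-2}$ piece into the coercive term. The exponent bookkeeping you anticipate is precisely the verification that $1-s_{2}\in(0,\alpha)$, $s_{2}\in(0,\alpha/2]$, and that the resulting $\theta$-norms $\|\theta\|_{\dot H^{2-2\alpha-s_{2}+\epsilon}}$, $\|\theta\|_{\dot H^{1+\beta-\alpha-s_{3}}}$ land in the range $[0,\beta/m]$ covered by Lemma~\ref{Lp}; this is where the three upper bounds on $\beta$ in the hypothesis arise. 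A secondary issue: do not invoke $\|\theta\|_{L^{\infty}_{t}(H^{1-\alpha}\cap B^{1-\alpha+\epsilon}_{\infty,1})}$ here---that estimate is established only \emph{after} the present theorem, so using it would be circular. At this stage the only inputs on $\theta$ are those from Lemma~\ref{Lp}.
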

\begin{proof}
Multiplying ($\ref{equation for G}$) by $|G|^{q-2}G$ and integrating in the spatial variable we obtain for every
$s_{2},s_{3} \in (0,\frac{\alpha}{2}]$ ($s_{3}\leq s_{2}$ and both will be chosen later)
\begin{align*}
& \frac{1}{q}\frac{d}{dt}\|G(t)\|_{L^{q}}^{q} + \int_{\mathbb{R}^{2}} \Lambda^{\alpha} G |G|^{q-2}G(t) \, dx \\
\leq & \int_{\mathbb{R}^{2}} \text{div}[\mathcal{R}_{\alpha},u]\theta |G|^{q-2}G(t) \, dx
+ \int_{\mathbb{R}^{2}} \Lambda^{\beta - \alpha} \partial_{1}\theta |G|^{q-2}G(t) \, dx \\
\leq & \|[\mathcal{R}_{\alpha},u]\theta(t)\|_{\dot{H}^{1-s_{2}}} \||G|^{q-2}G(t)\|_{\dot{H}^{s_{2}}}
+ \|\theta(t)\|_{\dot{H}^{1+\beta-\alpha-s_{3}}}\||G|^{q-2}G(t)\|_{\dot{H}^{s_{3}}}.
\end{align*}
Lemma \ref{fractional inequality} and continuous embedding $\dot{H}^{\frac{\alpha}{2}}\hookrightarrow L^{\frac{4}{2-\alpha}}$
lead to
\begin{align*}
\int_{\mathbb{R}^{2}} \Lambda^{\alpha}G |G|^{q-2}G(t)\, dx \gtrsim \|G\|_{L^{\frac{2q}{2-\alpha}}}^{q}.
\end{align*}
Using Lemma \ref{iequality} we obtain
\begin{align*}
\||G|^{q-2}G\|_{\dot{H}^{s_{i}}} \lesssim \|G\|_{L^\frac{2q}{2-\alpha}}^{q-2} \|G\|_{\dot{H}^{s_{i}+(1-\frac{2}{q})(2-\alpha)}},
\quad i=2,3.
\end{align*}
From the above statements, we have
\begin{align*}
& \frac{d}{dt}\|G(t)\|_{L^{q}}^{q} + \|G\|_{L^{\frac{2q}{2-\alpha}}}^{q} \\
\lesssim & \|[\mathcal{R}_{\alpha},u]\theta(t)\|_{\dot{H}^{1-s_{2}}}\|G(t)\|_{\dot{H}^{s_{2}+(1-\frac{2}{q})(2-\alpha)}}\|G(t)\|_{L^{\frac{2q}{2-\alpha}}}^{q-2}   \\
& + \|\theta(t)\|_{\dot{H}^{1+\beta -\alpha -s_{3}}} \|G(t)\|_{\dot{H}^{s_{3}+(1-\frac{2}{q})(2-\alpha)}} \|G(t)\|_{L^{\frac{2q}{2-\alpha}}}^{q-2}.
\end{align*}
Then we choose $s_{2}$ such that $s_{2}+(1-\frac{2}{q})(2-\alpha) = \frac{\alpha}{2}$ which calls for
$s_{2} = \frac{\alpha}{2} - (1-\frac{2}{q})(2-\alpha) \in (0,\frac{\alpha}{2}]$, this is plausible if
$\alpha \in (\frac{4q-8}{3q-4},1)$ for $q \in [2,4)$. Since $s_{3} \leq s_{2}$ by interpolation we have
\begin{align*}
\|G(t)\|_{\dot{H}^{s_{3}+(1-\frac{2}{q})(2-\alpha)}} & \lesssim \|G(t)\|_{\dot{H}^{\frac{\alpha}{2}}}^{\delta}
\|G(t)\|_{L^{2}}^{1-\delta}  \\
& \lesssim B(t) \|G(t)\|_{\dot{H}^{\frac{\alpha}{2}}}^{\delta},
\end{align*}
where $\delta := \frac{2}{\alpha}\left( s_{3}+(1-\frac{2}{q})(2-\alpha) \right)$. Form the definition, we know that $\delta \leq 1$.
Also noting that if $\alpha \in (\frac{6q-8}{5q-4},1)$, we have $1-s_{2} \in (0,\alpha)$,
then form Theorem \ref{crucial commutators} and estimation of $\|G\|_{L^{2}}$, we further get
\begin{align*}
& \|[\mathcal{R}_{\alpha},u]\theta(t)\|_{H^{1-s_{2}}} \\
\lesssim & \|G(t)\|_{L^{2}}\|\theta\|_{B_{\infty,2}^{1-s_{2}-\alpha+\epsilon}} + \|\theta(t)\|_{L^{\infty}}\|\theta(t)\|_{H^{2-s_{2}-2\alpha  +\epsilon}}  \\
& + \|G(t)\|_{L^{2}}\|\theta(t)\|_{L^{2}} + \|\theta(t)\|_{L^{2}}\|\theta\|_{L^{2}}.
\end{align*}
Since $\alpha > \frac{2}{3}$ and $2\leq q < \frac{4-2\alpha}{3-\frac{5}{2}\alpha}$, we know that
$1-s_{2}-\alpha +\epsilon < 0$. Hence, we further get
\begin{align*}
\|[\mathcal{R}_{\alpha},u]\theta(t)\|_{H^{1-s_{2}}} \lesssim B(t) + \|\theta(t)\|_{H^{2-s_{2}-2\alpha +\epsilon}}.
\end{align*}
Therefore we further have
\begin{align*}
& \frac{d}{dt}\|G(t)\|_{L^{q}}^{q} + \|G(t)\|_{L^{\frac{2q}{2-\alpha}}}^{q} \\
\lesssim & \left( B(t) + \|\theta(t)\|_{H^{2-2\alpha-s_{2}+\epsilon}} \right) \|G(t)\|_{L^{\frac{2q}{2-\alpha}}}^{q-2}
\|G(t)\|_{\dot{H}^{\frac{\alpha}{2}}} \\
& + B(t) \|\theta(t)\|_{\dot{H}^{1+\beta-\alpha-s_{3}}} \|G(t)\|_{L^{\frac{2q}{2-\alpha}}}^{q-2} \|G(t)\|_{\dot{H}^{\frac{\alpha}{2}}}^{\delta}.
\end{align*}
Using Young inequality as follows
\begin{align*}
|A_{1}A_{2}A_{3}| \leq C_{1} |A_{1}|^{\frac{2q}{4-q\delta}} + C_{2} |A_{2}|^{\frac{2}{\delta}} + \frac{c}{4}|A_{3}|^{\frac{q}{q-2}},
\quad \text{for all} \, \delta \in (0,1],
\end{align*}
For $\frac{2q}{4-\delta q} \geq 2$, we have
\begin{align*}
& \frac{d}{dt}\|G(t)\|_{L^{q}}^{q} + \|G(t)\|_{L^{\frac{2q}{2-\alpha}}}^{q} \\
\lesssim & B(t) + \|\theta(t)\|_{H^{2-2\alpha -s_{2} +\epsilon}}^{\frac{2q}{4-q}} + \|G(t)\|_{\dot{H}^{\frac{\alpha}{2}}}^{2}
+ B(t) \|\theta(t)\|_{\dot{H}^{1+\beta -\alpha -s_{3}}}^{\frac{2q}{4-\delta q}},
\end{align*}
For other cases, we have
\begin{align*}
& \frac{d}{dt}\|G(t)\|_{L^{q}}^{q} + \|G(t)\|_{L^{\frac{2q}{2-\alpha}}}^{q} \\
\lesssim & B(t) + \|\theta(t)\|_{H^{2-2\alpha -s_{2} +\epsilon}}^{\frac{2q}{4-q}} + \|G(t)\|_{\dot{H}^{\frac{\alpha}{2}}}^{2}
+ \|\theta(t)\|^{2}_{\dot{H}^{1+\beta-\alpha-s_{3}}}.
\end{align*}
Integrating in time yields
\begin{align*}
& \|G(t)\|_{L^{q}}^{q} + \int_{0}^{t} \|G(\tau)\|_{L^{\frac{2q}{2-\alpha}}}^{q} \, d\tau \\
\lesssim & B(t) + \|\theta(t)\|_{L_{t}^{\frac{2q}{4-q}}H^{2-2\alpha -s_{2} +\epsilon}}^{\frac{2q}{4-q}}
+ B(t)\|\theta(t)\|_{L_{t}^{\frac{2q}{4-q\delta}}\dot{H}^{1+\beta-\alpha -s_{3}}}^{\frac{2q}{4-q\delta}},
\end{align*}
for $\frac{2q}{4-q\delta} \geq 2$, and
\begin{align*}
& \|G(t)\|_{L^{q}}^{q} + \int_{0}^{t} \|G(\tau)\|_{L^{\frac{2q}{2-\alpha}}}^{q} \, d\tau \\
\lesssim & B(t) + \|\theta(t)\|_{L_{t}^{\frac{2q}{4-q}}H^{2-2\alpha -s_{2}+\sigma +\epsilon}}^{\frac{2q}{4-q}}
+ \|\theta(t)\|^{2}_{L_{t}^{2}\dot{H}^{1+\beta -\alpha-s_{3}}},
\end{align*}
for other cases. In the above calculus we use the conclusion of Theorem \ref{GL2}, so the range of $\alpha$ and $\beta$
must satisfy the conditions in Theorem \ref{GL2}.

Let $q \in [2,\frac{20}{9})$.
If $\alpha \in (\frac{9q-12}{8q-8},\frac{8q-8}{7q-4}]$, we choose $s_{3}=s_{2}=\frac{3q-4}{2q}\alpha + \frac{4}{q} -2$,
for $\beta \in (1-\alpha,\frac{5q-4}{3q-4}\alpha - 2]$ and for small enough $\epsilon >0$, we have
\begin{align*}
& 0\leq 1+\beta -\alpha -s_{2} \leq \frac{4-q}{2q}\beta, \\
& 0\leq 2 +\epsilon -2\alpha -s_{2} \leq \frac{4-q}{2q}\beta.
\end{align*}
From Lemma \ref{Lp} and interpolation inequality we find
\begin{align*}
\|\theta\|_{L_{t}^{\frac{2q}{4-q}}\dot{H}^{1+\beta -\alpha-s_{2}}} + \|\theta\|_{L_{t}^{\frac{2q}{4-q}}H^{2+\sigma+\epsilon-2\alpha-s_{2}}} \lesssim 1+t.
\end{align*}
Let $q \in [2,\frac{20}{9})$.
If $\alpha \in (\frac{8q-8}{7q-4},1)$, we choose $s_{3}=2-2\alpha <s_{2}$, then
$\delta = \frac{2}{\alpha}\left( 2-2\alpha + \frac{q-2}{q}(2-\alpha) \right)$
and for $\beta \in \left( 1-\alpha, \min{\{ 2-2\alpha, \frac{1-\alpha}{\frac{4}{\alpha}(1-\frac{1}{q})-2} \}} \right)$
we can get
\begin{align*}
& 0\leq \beta -1 +\alpha \leq \frac{4-q\delta}{2q}\beta, \\
& 0\leq \beta -1 +\alpha \leq \frac{\beta}{2}.
\end{align*}
Hence, we have
\begin{align*}
\|\theta\|_{L_{t}^{\frac{2q}{4-q\delta}}\dot{H}^{1+\beta -\alpha -s_{3}}}
+ \|\theta\|_{L_{t}^{2}\dot{H}^{1+\beta -\alpha -s_{3}}} \lesssim 1+t.
\end{align*}
The range of $\alpha$ and $\beta$ will monotonously shrink when $q$ increase.
Hence for some $q \in [2,\frac{20}{9})$ and for every
$\widetilde{q} \in [2,q]$ we have for every $t\in \mathbb{R}^{+}$
\begin{align*}
\|G(t)\|_{L^{\widetilde{q}}}^{\widetilde{q}} + \int_{0}^{t} \|G(\tau)\|_{L^{\frac{2\widetilde{q}}{2-\alpha}}}^{\widetilde{q}} \,d\tau
\leq B(t).
\end{align*}
\end{proof}

\subsection{Estimation of $\|\omega\|_{L_{t}^{1}L^{\widetilde{q}}}$ for every $\widetilde{q}\in [2,q]$ and for some $q \in [2,\frac{20}{9})$}

In this subsection we give the estimate of $\|\omega\|_{L_{t}^{1}L^{\widetilde{q}}}$ for $\widetilde{q} \in [2,q]$
for some $q \in [2,\frac{20}{9})$.
\begin{theorem}\label{omega}
Consider (\ref{GB}) with $\sigma = 0$ and $\gamma \geq 0$. Assume that $(\omega_{0},\theta_{0})$ satisfies the conditions
in Theorem \ref{global result}. Let $(\omega,\theta)$ be the corresponding solution of (\ref{GB}). For some
$q \in [2,\frac{20}{9})$ and for all $\widetilde{q} \in [2,q]$, when $(\alpha,\beta)$ satisfies the
same conditions as in Theorem \ref{estimateGq}, we have
\begin{align*}
\|\omega\|_{L_{t}^{1}L^{\widetilde{q}}} \leq B(t).
\end{align*}
\end{theorem}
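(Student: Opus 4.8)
The plan is to use the splitting $\omega = G + \mathcal{R}_\alpha\theta$ coming from the definition $G = \omega - \mathcal{R}_\alpha\theta$, and to bound the two pieces separately in $L^1_t L^{\widetilde q}$. For the first piece there is nothing to do beyond invoking Theorem \ref{estimateGq}: under the stated hypotheses on $(\alpha,\beta,q)$ it yields $\|G(t)\|_{L^{\widetilde q}}^{\widetilde q}+\int_0^t\|G(\tau)\|_{L^{2\widetilde q/(2-\alpha)}}^{\widetilde q}\,d\tau\leq B(t)$ for every $\widetilde q\in[2,q]$, so in particular $G\in L^\infty_tL^{\widetilde q}$ and hence $\|G\|_{L^1_tL^{\widetilde q}}\leq t\,\|G\|_{L^\infty_tL^{\widetilde q}}\leq B(t)$. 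Thus everything reduces to showing $\int_0^t\|\mathcal{R}_\alpha\theta(\tau)\|_{L^{\widetilde q}}\,d\tau\leq B(t)$.

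For the term $\mathcal{R}_\alpha\theta=\Lambda^{1-\alpha}\mathcal{R}\theta$ I would first treat the low frequencies: by Bernstein's inequality ($\widetilde q\geq2$) and Theorem \ref{generalized riesz}(1) with $s=0$, $\|\Delta_{-1}\mathcal{R}_\alpha\theta\|_{L^{\widetilde q}}\lesssim\|\Delta_{-1}\mathcal{R}_\alpha\theta\|_{L^2}\lesssim\|\theta\|_{L^2}\leq\|\theta_0\|_{L^2}$, which contributes only $t\|\theta_0\|_{L^2}$ after integration in time. For the high frequencies the Riesz transform is bounded on $L^{\widetilde q}$ for $1<\widetilde q<\infty$, so the task becomes to estimate $\int_0^t\|\Lambda^{1-\alpha}\theta(\tau)\|_{L^{\widetilde q}}\,d\tau$. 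Here I would interpolate by a Gagliardo--Nirenberg inequality in $\mathbb{R}^2$ of the shape $\|\Lambda^{1-\alpha}\theta\|_{L^{\widetilde q}}\lesssim\|\theta\|_{L^2}^{1-\lambda}\|\theta\|_{\dot H^{s}}^{\lambda}$, where $s$ and $\lambda$ obey the scaling identity $2/\widetilde q=2-\alpha-\lambda s$ and the admissibility conditions $(1-\alpha)/s\leq\lambda\leq1$, and then use Hölder in time together with the bounds on $\theta$ that are already available, namely $\|\theta\|_{L^m_t\dot H^{\beta/m}}\lesssim\|\theta_0\|_{L^2}$ from Lemma \ref{Lp}, the space--time bounds $\|\theta\|_{L^r_t\dot H^{\varrho}}\lesssim 1+t$ for the particular exponents $(r,\varrho)$ produced inside the proofs of Theorems \ref{GL2} and \ref{estimateGq}, and the extra integrability gain $\|G\|_{L^{\widetilde q}_tL^{2\widetilde q/(2-\alpha)}}\leq B(t)$. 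After optimising over $s$ (which frequency band to treat with which Sobolev exponent) and over the time exponents, one obtains $\int_0^t\|\Lambda^{1-\alpha}\theta\|_{L^{\widetilde q}}\,d\tau\leq B(t)$, and combining with the $G$-bound gives $\|\omega\|_{L^1_tL^{\widetilde q}}\leq B(t)$.

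The main obstacle is precisely this last interpolation step. The only smoothing available for $\theta$ at this stage is the very weak $L^2$--based gain $\Lambda^{\beta/2}\theta\in L^2_tL^2$ (with $\beta<2-2\alpha$, and no control yet on $\int_0^t\|\nabla u\|_{L^\infty}$ that would let one propagate the Besov regularity of $\theta_0$ via Lemma \ref{expo}). Consequently the scaling and admissibility constraints above are essentially saturated: balancing the loss of $1-\alpha$ derivatives in $\mathcal{R}_\alpha$ against the $\beta/2$ gain for $\theta$, against the $\alpha/2$ gain hidden in the $G$-estimate, and against the gained integrability exponent $2\widetilde q/(2-\alpha)$, leaves essentially no slack, and it is exactly this bookkeeping that forces $\alpha\in[\tfrac{19}{20},1)$, $\beta$ below the three--way minimum $g(\alpha)=\min\{2-2\alpha,\tfrac83\alpha-2,\tfrac{5\alpha(1-\alpha)}{11-10\alpha}\}$, and $q<\tfrac{20}{9}$. (If instead one prefers to work from the transport--diffusion equation $\partial_t(\mathcal{R}_\alpha\theta)+u\cdot\nabla(\mathcal{R}_\alpha\theta)+\Lambda^\beta(\mathcal{R}_\alpha\theta)=-\nabla\cdot[\mathcal{R}_\alpha,u]\theta$ and the $L^{\widetilde q}$ bound of Lemma \ref{Lpestimate}, the commutator on the right must be controlled by Theorem \ref{crucial commutators}(1) together with $B^0_{\widetilde q,1}\hookrightarrow L^{\widetilde q}$, after which a Grönwall argument in $\|\mathcal{R}_\alpha\theta\|_{L^{\widetilde q}}$ using $\|u\|_{\dot B^{1-\epsilon}_{\widetilde q,\infty}}\lesssim\|\omega\|_{L^{\widetilde q}}=\|G+\mathcal{R}_\alpha\theta\|_{L^{\widetilde q}}$ closes the loop; the decisive difficulty, and the source of the same parameter restrictions, is again the time integrability of the low-order $L^\infty$-Besov norm of $\theta$.) With those restrictions in hand the estimate closes for every $\widetilde q\in[2,q]$, which yields the theorem.
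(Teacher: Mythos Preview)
Your main line of argument has a genuine gap. After handling $G$ (which is fine) you try to bound $\int_0^t\|\mathcal{R}_\alpha\theta\|_{L^{\widetilde q}}\,d\tau$ by interpolating the $L^2$--based smoothing of $\theta$ from Lemma~\ref{Lp}. But the best spatial regularity that lemma provides is $\theta\in L^2_t\dot H^{\beta/2}$, and under the standing hypothesis $\beta<2-2\alpha$ one has $\beta/2<1-\alpha$. Since $\mathcal{R}_\alpha=\Lambda^{1-\alpha}\mathcal{R}$ costs exactly $1-\alpha$ derivatives, no Gagliardo--Nirenberg bookkeeping with endpoints $L^2$ and $\dot H^{\beta/2}$ (or, equivalently, any $L^m_t\dot H^{\beta/m}$ with $m\ge 2$) can produce $\Lambda^{1-\alpha}\theta\in L^1_tL^{\widetilde q}$: the admissibility constraint $\lambda\ge(1-\alpha)/(\beta/2)>1$ is violated. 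The auxiliary $L^r_t\dot H^{\varrho}$ bounds you invoke from the proofs of Theorems~\ref{GL2} and~\ref{estimateGq} are themselves interpolants of the same $L^2$ energy identity and cannot exceed $\beta/2$ in spatial regularity either. Your alternative Gr\"onwall route via the transport--diffusion equation for $\mathcal{R}_\alpha\theta$ runs into the same wall: the commutator estimate from Theorem~\ref{crucial commutators}(1) requires control of $\|\theta\|_{B^{1-\alpha+\epsilon}_{\infty,q}}$, which is not available until $\int_0^t\|\nabla u\|_{L^\infty}$ is finite (Lemma~\ref{expo}), i.e.\ until after this theorem.

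The paper closes the loop by a \emph{bootstrap} rather than a direct estimate. The key input is Lemma~\ref{est}, which gives a smoothing bound for $\theta$ that depends on the very quantity being sought: $\sup_{j\in\mathbb N}2^{j\beta/\rho}\|\Delta_j\theta\|_{L^\rho_tL^{\widetilde q}}\lesssim \|\theta_0\|_{L^{\widetilde q}}+\|\theta_0\|_{L^\infty}\|\omega\|_{L^1_tL^{\widetilde q}}$. Choosing $\rho>1$ with $\beta/\rho>1-\alpha$ (possible precisely because $\alpha+\beta>1$) and using the embedding $B^{\beta/\rho}_{\widetilde q,\infty}\hookrightarrow B^{1-\alpha}_{\widetilde q,1}$ gives $\|\mathcal{R}_\alpha\theta\|_{\widetilde L^\rho_tB^0_{\widetilde q,1}}$ in terms of $\|\omega\|_{L^1_tL^{\widetilde q}}$; after H\"older in time this produces the recursive inequality $\|\omega\|_{L^1_tL^{\widetilde q}}\le B(t)+C\,t^{1-1/\rho}\|\omega\|_{L^1_tL^{\widetilde q}}$. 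The small factor $t^{1-1/\rho}$ lets one absorb on a short interval $[0,T_0]$, and time--translation invariance (using $\|\theta(T_0)\|_{L^{\widetilde q}}\le\|\theta_0\|_{L^{\widetilde q}}$) iterates this to all $t$. Note also that this step does \emph{not} generate the parameter restrictions you attribute to it; those come entirely from Theorem~\ref{estimateGq} (and later Theorem~\ref{GBq}).
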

\begin{proof}
we choose $q$ as in Theorem \ref{estimateGq}. Since $\beta > 1 - \alpha$, there exists a fixed constant $\rho > 1$ such that
$\frac{\beta}{\rho} > 1-\alpha$. From the explicit formula of $G$ we have for every $\widetilde{q} \in [2,q]$
\begin{align*}
\|\omega\|_{L_{t}^{1}L^{\widetilde{q}}} & \leq \|G\|_{L_{t}^{1}L^{\widetilde{q}}} + \|\mathcal{R}_{\alpha}\theta\|_{L_{t}^{1}B_{\widetilde{q},1}^{0}} \\
& \leq B(t) + t^{1-\frac{1}{\rho}} \|\mathcal{R}_{\alpha}\theta\|_{\widetilde{L}_{t}^{\rho}B_{\widetilde{q},1}^{0}}.
\end{align*}
By a high-low frequency decomposition and a continuous embedding
$B_{\widetilde{q},\infty}^{\frac{\beta}{\rho}} \hookrightarrow B_{\widetilde{q},q}^{1-\alpha}$ we find
\begin{align*}
\|\mathcal{R}_{\alpha}\theta\|_{\widetilde{L}_{t}^{\rho}B_{\widetilde{q},1}^{0}} & \leq
\|\Delta_{-1}\mathcal{R}_{\alpha}\theta\|_{\widetilde{L}_{t}^{\rho}B_{\widetilde{q},1}^{0}} +
\|(\text{Id}-\Delta_{-1})\theta\|_{\widetilde{L}_{t}^{\rho}B_{\widetilde{q},1}^{1-\alpha}}  \\
& \lesssim \|\Delta_{-1}\theta\|_{L_{t}^{\rho}L^{\widetilde{q}}} +
\|(\text{Id}-\Delta_{-1})\theta\|_{\widetilde{L}_{t}^{\rho}B_{\widetilde{q},\infty}^{\frac{\beta}{\rho}}}  \\
& \lesssim t^{\frac{1}{\rho}} \|\theta_{0}\|_{L^{\widetilde{q}}}
+ \sup_{j \in \mathbb{N}} 2^{j\frac{\beta}{\rho}} \|\Delta_{j}\theta\|_{L_{t}^{\rho}L^{\widetilde{q}}}.
\end{align*}
Inserting the above estimate into the previous one and applying Lemma \ref{est} we obtain
\begin{align*}
\|\omega\|_{L_{t}^{1}L^{\widetilde{q}}} \leq B(t) + C t^{1-\frac{1}{\rho}}\|\omega\|_{L_{t}^{1}L^{\widetilde{q}}},
\end{align*}
where $C$ is an absolute constant depending only on $\widetilde{q},\rho$ and $\|\theta_{0}\|_{L^{2}\cap L^{\infty}}$.
If $C t^{1-\frac{1}{\rho}} = \frac{1}{2}$ equivalently, $t=(\frac{1}{2C})^{\frac{\rho}{\rho -1}}:=T_{0}$, then
for every $t\leq T_{0}$
\begin{align*}
\|\omega\|_{L_{t}^{1}L^{\widetilde{q}}} \leq B(t).
\end{align*}
Furthermore, if we evolve the system from the initial data $(u(T_{0}),\theta(T_{0}))$, then using the time
translation invariance and the fact that $\|\theta(T_{0})\|_{L^{\widetilde{q}}} \leq \|\theta_{0}\|_{L^{\widetilde{q}}}$,
we have for every $t \leq T_{0}$
\begin{align*}
\|\omega\|_{L^{1}_{[T_{0},T_{0}+t]}L^{\widetilde{q}}} \leq B(T_{0}+t).
\end{align*}
Iterating like this, we finally get for every $t \in \mathbb{R}^{+}$
\begin{align*}
\|\omega\|_{L_{t}^{1}L^{\widetilde{q}}} \leq B(t).
\end{align*}
\end{proof}

\subsection{Estimation of $\|G\|_{L_{t}^{1}B_{q,1}^{s}}$}

In this subsection, we give the estimation of $\|G\|_{L_{t}^{1}B_{q,1}^{s}}$. First we give a Lemma which is proved in \cite{ANBIAT2DQGE}
\begin{lemma}\label{Bernstein new}
Let $p \in [2,\infty)$ and $\alpha \in [0,1]$. Then there exist two positive constants $c_{p}$ and $C_{p}$ such that for
any $f \in \mathcal{S}^{'}$ and $j \in \mathbb{Z}$, we have
\begin{align*}
c_{p}2^{\frac{2\alpha j}{p}}\|\Delta_{j}f\|_{L^{p}} \leq \|\Lambda^{\alpha}(|\Delta_{j}f|^{\frac{p}{2}})\|_{L^{2}}^{\frac{2}{p}}
\leq C_{p}2^{\frac{2\alpha j}{p}} \|\Delta_{j}f\|_{L^{p}}.
\end{align*}
\end{lemma}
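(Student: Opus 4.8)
The plan is to prove the two inequalities separately: the upper one by a direct computation, and the lower one after rescaling to unit frequency.

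\emph{Upper bound.} Write $g:=|\Delta_j f|^{p/2}$, so that $\|g\|_{L^2}=\|\Delta_j f\|_{L^p}^{p/2}$; it suffices to show $\|\Lambda^\alpha g\|_{L^2}\lesssim_p 2^{\alpha j}\|g\|_{L^2}$. The case $\alpha=0$ is an equality. For $\alpha=1$ one uses $|\nabla g|\le\tfrac p2|\Delta_j f|^{p/2-1}|\nabla\Delta_j f|$ a.e., Hölder with exponents $(\tfrac{2p}{p-2},p)$, and the Bernstein bound $\|\nabla\Delta_j f\|_{L^p}\lesssim 2^j\|\Delta_j f\|_{L^p}$, to get $\|\Lambda g\|_{L^2}=\|\nabla g\|_{L^2}\lesssim 2^j\|\Delta_j f\|_{L^p}^{p/2}$. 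For $0<\alpha<1$, I would use the Gagliardo formula $\|\Lambda^\alpha g\|_{L^2}^2\simeq\iint\frac{|g(x)-g(y)|^2}{|x-y|^{d+2\alpha}}\,dx\,dy$ together with the pointwise bound $\bigl||a|^{p/2}-|b|^{p/2}\bigr|\le\tfrac p2\bigl(|a|^{p/2-1}+|b|^{p/2-1}\bigr)|a-b|$ (valid since $p\ge2$), which by symmetry reduces the estimate to controlling $\iint\frac{|\Delta_j f(x)|^{p-2}\,|\Delta_j f(x)-\Delta_j f(y)|^2}{|x-y|^{d+2\alpha}}$. Splitting this at $|x-y|=2^{-j}$ finishes it: on $\{|x-y|>2^{-j}\}$ bound the increment by $|\Delta_j f(x)|+|\Delta_j f(y)|$ and integrate the (integrable) tail of the kernel, treating the cross term by Young's inequality; on $\{|x-y|\le 2^{-j}\}$ set $z=y-x$, apply Hölder with exponents $(\tfrac p{p-2},\tfrac p2)$ and $\|\Delta_j f(\cdot)-\Delta_j f(\cdot+z)\|_{L^p}\lesssim 2^j|z|\|\Delta_j f\|_{L^p}$, then integrate $\int_{|z|\le2^{-j}}|z|^{2-d-2\alpha}\,dz\simeq 2^{-j(2-2\alpha)}$, which is finite precisely because $\alpha<1$. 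Both pieces are $\lesssim 2^{2\alpha j}\|\Delta_j f\|_{L^p}^p$; taking a square root followed by a $p$-th root gives the upper bound.

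\emph{Lower bound.} By the scaling $f(x)\mapsto f(2^{-j}x)$, which turns $\Delta_j$ into $\Delta_0$ and is homogeneous of the correct degree on both sides, it suffices to take $j=0$; write $v:=\Delta_0 f$, with $\widehat v$ supported in the fixed annulus $\mathcal C=\operatorname{supp}\varphi$. Decomposing $h:=|v|^{p/2}=h_{\mathrm{low}}+h_{\mathrm{high}}$ with $\widehat{h_{\mathrm{low}}}=\mathbf 1_{\{|\xi|\le 2^{-N}\}}\widehat h$, one has $\|\Lambda^\alpha h\|_{L^2}\ge 2^{-N\alpha}\|h_{\mathrm{high}}\|_{L^2}$, so the bound $\|\Lambda^\alpha(|v|^{p/2})\|_{L^2}\gtrsim_p\|v\|_{L^p}^{p/2}=\|h\|_{L^2}$ will follow once one establishes a \emph{spectral gap}: for some fixed $N=N(p,d)$ and $\delta>0$, $\|h_{\mathrm{low}}\|_{L^2}\le(1-\delta)\|h\|_{L^2}$, uniformly over all such $v$. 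Morally this holds because $\widehat v$ misses a neighbourhood of the origin, so $v$ genuinely oscillates and changes sign, and the rectifier $t\mapsto|t|^{p/2}$ then forces $|v|^{p/2}$ to carry a non‑negligible amount of $L^2$‑energy at frequencies $\sim 1$. To make it rigorous I would argue by contradiction and compactness: a sequence $v_n$ with $\widehat{v_n}$ in $\mathcal C$, $\|v_n\|_{L^p}=1$ and $\Lambda^\alpha(|v_n|^{p/2})\to0$ in $L^2$ would force $|v_n|^{p/2}$ to behave, up to translations, like a purely low‑frequency profile, which is incompatible with $v_n=\psi*v_n$ for the fixed Schwartz function $\psi$ with $\widehat\psi\equiv1$ on $\mathcal C$; alternatively, one can extract the frequency‑$\sim1$ part of $|v|^{p/2}$ by pairing it with a fixed oscillatory test function adapted to $\partial\mathcal C$. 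Rescaling back yields the lower bound for all $j$.

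I expect the spectral‑gap step to be the only genuine obstacle: since $|\Delta_j f|^{p/2}$ is not frequency‑localized, Lemma~\ref{bernstein}(2) cannot be applied to it directly, and one must instead quantify the high‑frequency energy created by taking the modulus. (Throughout I take $f$ real‑valued, as holds for $\omega$ and $\theta$ in this paper; for complex‑valued $f$ the modulus can be genuinely low‑frequency and the lower bound need not hold.)
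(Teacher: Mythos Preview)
The paper does not prove this lemma at all: it simply records the statement and cites Chen--Miao--Zhang \cite{ANBIAT2DQGE}. So there is no ``paper's proof'' to compare against; your attempt is a genuine proof from scratch.

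Your \emph{upper bound} is correct and is essentially the standard argument. The case split on $\alpha$, the pointwise bound $\bigl||a|^{p/2}-|b|^{p/2}\bigr|\le \tfrac{p}{2}(|a|^{p/2-1}+|b|^{p/2-1})|a-b|$, and the near/far splitting of the Gagliardo integral at scale $2^{-j}$ all go through exactly as you indicate.

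Your \emph{lower bound}, however, has a real gap, and you are right to flag it. After rescaling to $j=0$ you reduce to a uniform ``spectral gap'' for $h=|v|^{p/2}$ over all real $v$ with $\widehat v\subset\mathcal C$, and you propose to obtain it by contradiction and compactness. On $\mathbb R^d$ this does not work by soft arguments: a normalized sequence $v_n$ with spectrum in a fixed annulus has no reason to converge in $L^p$, and invoking ``up to translations'' is a concentration--compactness step that itself requires justification (the profiles can split, escape to infinity, etc.). Your alternative suggestion of pairing with a fixed oscillatory test function is also only heuristic: such a pairing yields a nontrivial lower bound for \emph{one} $v$, not a uniform one. In short, neither sketch closes the argument, and as written the lower bound is not proved.

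For orientation, what Chen--Miao--Zhang actually establish (and what the paper really needs in the proof of Theorem~\ref{GBq}) is the direct estimate
\[
\int_{\mathbb R^d}|\Delta_j f|^{p-2}\Delta_j f\,\Lambda^{2\alpha}\Delta_j f\,dx\;\ge\;c_p\,2^{2\alpha j}\|\Delta_j f\|_{L^p}^{p},
\]
which, combined with Lemma~\ref{fractional inequality}, is how the paper obtains the coercivity $\int \Lambda^{\alpha}\Delta_j G\,|\Delta_j G|^{q-2}\Delta_j G\ge c\,2^{j\alpha}\|\Delta_j G\|_{L^q}^q$. Their proof is constructive: it uses the singular–integral representation of $\Lambda^{2\alpha}$ and an explicit lower bound on the kernel, avoiding any compactness. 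If you want to prove the lower half of the lemma as stated (with $|\Delta_j f|^{p/2}$ rather than the pairing), you would either have to make the spectral–gap argument quantitative, or go through the Chen--Miao--Zhang inequality together with a pointwise two–sided comparison; note, though, that the naive pointwise reverse of C\'ordoba--C\'ordoba fails (take $a=1$, $b=-1$), so some extra care is required.
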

\begin{theorem}\label{GBq}
Consider (\ref{GB}) with $\sigma = 0$ and $\gamma \geq 0$. Assume that $(\omega_{0},\theta_{0})$ satisfies the
conditions in Theorem \ref{global result}. Let $(\omega,\theta)$ be the corresponding solution of (\ref{GB}). Let
$G$ defined as in (\ref{equation for G}). For $\alpha \in [\frac{19}{20},1)$,
$\beta \in (1-\alpha, \min\{ 2-2\alpha,\frac{8}{3}\alpha-2, $ $\frac{5 \alpha(1-\alpha)}{11-10\alpha} \})$
and $\frac{9}{10}\leq s < 2\alpha-1$. We have
\begin{align*}
\|G\|_{L_{t}^{1}B_{q,1}^{s}}\leq B(t),
\end{align*}
where $q = \frac{20}{9}-\epsilon_{1}$ for $\epsilon_{1} > 0$ is arbitrary small.
In particular,
\begin{align*}
\|G\|_{L_{t}^{1}B_{\infty,1}^{0}}\leq B(t).
\end{align*}
\end{theorem}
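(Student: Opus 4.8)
The plan is to apply a Littlewood--Paley energy method directly to equation (\ref{equation for G}), observing that it is precisely the linear transport--diffusion equation (\ref{linear transport}) with $\beta$ replaced by $\alpha$ and forcing
\[
f:=[\mathcal{R}_{\alpha},u\cdot\nabla]\theta+\Lambda^{\beta}\mathcal{R}_{\alpha}\theta=[\mathcal{R}_{\alpha},u\cdot\nabla]\theta+\Lambda^{\beta-\alpha}\partial_{1}\theta .
\]
Localizing (\ref{equation for G}) with $\Delta_{j}$, testing against $|\Delta_{j}G|^{q-2}\Delta_{j}G$, using Lemma \ref{fractional inequality} together with the generalized Bernstein inequality Lemma \ref{Bernstein new} to bound the dissipation from below by $c\,2^{\alpha j}\|\Delta_{j}G\|_{L^{q}}^{q}$, and integrating the resulting fractional--heat Duhamel inequality in time (the transport commutator $[\Delta_{j},u\cdot\nabla]G$ being absorbed; equivalently, one applies Lemma \ref{expo} with $\beta\rightsquigarrow\alpha$ and forcing $f$ to the $G$-equation), the gained factor $2^{-\alpha j}$ turns the $B^{s}_{q,1}$-norm of $G$ into the $B^{s-\alpha}_{q,1}$-norm of data and forcing:
\[
\|G\|_{L^{1}_{t}B^{s}_{q,1}}\lesssim e^{CU(t)}\Bigl(\|G_{0}\|_{B^{s-\alpha}_{q,1}}+\bigl\|[\mathcal{R}_{\alpha},u\cdot\nabla]\theta\bigr\|_{L^{1}_{t}B^{s-\alpha}_{q,1}}+\bigl\|\Lambda^{\beta-\alpha}\partial_{1}\theta\bigr\|_{L^{1}_{t}B^{s-\alpha}_{q,1}}\Bigr)+B(t),
\]
where $U(t):=\int_{0}^{t}\|\nabla u(\tau)\|_{L^{\infty}}\,d\tau$ and $q=\tfrac{20}{9}-\epsilon_{1}$, so that $s-\alpha\in(-1,0)$ by $\tfrac{9}{10}\le s<2\alpha-1<\alpha$. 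It then remains to bound the three terms and to remove the factor $e^{CU(t)}$.

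For the datum, $q>2$ and $s-\alpha<0$ give $\omega_{0}\in L^{q}\hookrightarrow B^{s-\alpha}_{q,1}$, while $\mathcal{R}_{\alpha}\theta_{0}=\Lambda^{1-\alpha}\mathcal{R}\theta_{0}$ is placed in $B^{s-\alpha}_{q,1}$ by first interpolating the hypotheses $\theta_{0}\in H^{1-\alpha}$ and $\theta_{0}\in B^{1-\alpha+\epsilon}_{\infty,1}$ to an $L^{q}$-based Besov space of regularity just above $1-\alpha$, then applying the order-$(1-\alpha)$ operator $\Lambda^{1-\alpha}\mathcal{R}$. For the commutator, Theorem \ref{crucial commutators}(1) with $\sigma=0$ and the admissible index $s-\alpha$ yields
\[
\bigl\|[\mathcal{R}_{\alpha},u\cdot\nabla]\theta\bigr\|_{B^{s-\alpha}_{q,1}}\lesssim\|u\|_{\dot B^{1-\epsilon}_{q,\infty}}\bigl(\|\theta\|_{B^{s+1+\epsilon-2\alpha}_{\infty,1}}+\|\theta\|_{L^{\infty}}\bigr);
\]
since $s<2\alpha-1$ forces $s+1+\epsilon-2\alpha<0$, the Besov factor is dominated by $\|\theta\|_{L^{\infty}}\le\|\theta_{0}\|_{L^{\infty}}$ (Lemma \ref{Lp}), and $\|u\|_{\dot B^{1-\epsilon}_{q,\infty}}\lesssim\|\omega\|_{L^{q}}$ (modulo lower-order terms absorbed by $\|G\|_{L^{\infty}_{t}L^{2}}$; the logarithmic loss in $u=\nabla^{\bot}\Delta^{-1}(\log(I-\Delta))^{\gamma}\omega$ being swallowed by the $\epsilon$), which lies in $L^{1}_{t}$ by Theorem \ref{omega}. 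Hence this term is $\le B(t)$.

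For the linear forcing, $\Lambda^{\beta-\alpha}\partial_{1}\theta\in B^{s-\alpha}_{q,1}$ reduces to $\theta\in B^{s+\beta-2\alpha+1}_{q,1}$, and again $s<2\alpha-1$ makes the regularity index strictly below the full diffusion gain $\beta$; accordingly $\theta\in L^{1}_{t}B^{s+\beta-2\alpha+1}_{q,1}$ follows from the $\beta$-smoothing of Lemma \ref{expo} for the temperature equation, interpolating the conclusions obtained from $\theta_{0}\in H^{1-\alpha}$ and from $\theta_{0}\in B^{1-\alpha+\epsilon}_{\infty,1}$ down to the exponent $q$. The $(\alpha,\beta)$-range allowed here is exactly that of Theorems \ref{GL2}, \ref{estimateGq}, \ref{omega} evaluated at $q=\tfrac{20}{9}$, whose a priori bounds are used above; the additional demands $\alpha\ge\tfrac{19}{20}$ and $s\in[\tfrac{9}{10},2\alpha-1)$ come from simultaneously needing $s<2\alpha-1$ (twice above) and $s\ge 2/q$ (next), which is possible only when $2\alpha-1\ge\tfrac{9}{10}$.

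It remains to remove $e^{CU(t)}$ and the analogous factors hidden in the temperature estimates. Writing $\omega=G+\mathcal{R}_{\alpha}\theta$, using $\|\nabla u\|_{L^{\infty}}\lesssim\|\omega\|_{B^{0,\gamma}_{\infty,1}}$ and the embedding $B^{s}_{q,1}\hookrightarrow B^{s-2/q}_{\infty,1}\hookrightarrow B^{0,\gamma}_{\infty,1}$ — valid since $s\ge 2/q$ with a strictly positive gap $s-2/q>0$ for $q=\tfrac{20}{9}-\epsilon_{1}$, a gap that beats the logarithm — one gets $U(t)\lesssim\|G\|_{L^{1}_{t}B^{s}_{q,1}}+\|\theta\|_{L^{1}_{t}B^{1-\alpha+\delta}_{\infty,1}}$ for small $\delta>0$, the last term controlled by temperature smoothing. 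Inserting this back, $\|G\|_{L^{1}_{t}B^{s}_{q,1}}$ reappears on the right; a continuity/bootstrap argument on a short interval $[0,T_{0}]$ (where the contribution of $\|G\|_{L^{1}_{t}B^{s}_{q,1}}$ to $U$ is absorbed), followed by iteration using the time-translation invariance of the bounds of Theorems \ref{GL2}--\ref{omega} and the maximum principle $\|\theta(t)\|_{L^{p}}\le\|\theta_{0}\|_{L^{p}}$, gives $\|G\|_{L^{1}_{t}B^{s}_{q,1}}\le B(t)$ for all $t>0$; the ``in particular'' claim is the same embedding $B^{s}_{q,1}\hookrightarrow B^{0}_{\infty,1}$. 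I expect this closure to be the main obstacle: the Lipschitz norm of $u$, not controlled a priori, re-enters through the transport, and the argument can only close because the pair $(q,s)$ just barely places $B^{s}_{q,1}$ inside $B^{0}_{\infty,1}$ — the true source of the narrow ranges for $\alpha$, $\beta$ and $s$.
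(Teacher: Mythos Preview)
Your treatment of the data, of the commutator $[\mathcal{R}_{\alpha},u\cdot\nabla]\theta$ via Theorem \ref{crucial commutators}(1), and of the linear forcing $\Lambda^{\beta-\alpha}\partial_{1}\theta$ is essentially the paper's argument. The gap is in how you deal with the transport commutator $[\Delta_{j},u\cdot\nabla]G$.

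You absorb it by invoking Lemma \ref{expo}, which produces the factor $e^{CU(t)}$ with $U(t)=\int_{0}^{t}\|\nabla u\|_{L^{\infty}}$. Since $\|\nabla u\|_{L^{\infty}}$ is not yet controlled at this stage, you are forced into an inequality of the shape $X(t)\le B(t)\,e^{C(X(t)+Y(t))}$, with $X(t)=\|G\|_{L^{1}_{t}B^{s}_{q,1}}$. The continuity/iteration scheme you outline does not close: the feedback is exponential, not linear, and $B(t)$ contains the initial-data norms, so it is not small even for small $t$. A short-time bound would require $B(T_{0})e^{CX(T_{0})}\le X(T_{0})$ for some $X(T_{0})$, which fails once $B(T_{0})$ exceeds $(Ce)^{-1}$; iterating forward only makes $B$ larger. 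The Lipschitz norm of $u$ is exactly what the whole argument is trying to reach, and Lemma \ref{expo} puts it in the worst possible place.

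The paper avoids this entirely. It does \emph{not} apply Lemma \ref{expo} to the $G$-equation. Instead it keeps $[\Delta_{j},u\cdot\nabla]G$ as an explicit forcing and bounds it with the dedicated commutator estimate Lemma \ref{te}(2), which exploits the specific structure $u=\nabla^{\perp}\Delta^{-1}(\log(I-\Delta))^{\gamma}(G+\mathcal{R}_{\alpha}\theta)$ to get
\[
2^{j(\alpha-1-\epsilon)}\|[\Delta_{j},u\cdot\nabla]G\|_{L^{q}}\lesssim\bigl(\|G\|_{B^{\alpha-1}_{q,\infty}}+\|G\|_{L^{2}}+\|\theta_{0}\|_{L^{p}\cap L^{2}}\bigr)\|G\|_{B^{0}_{\infty,\infty}}.
\]
The first factor is already $\le B(\tau)$ by Theorems \ref{GL2}--\ref{estimateGq}; only the second factor feeds back, via $\|G\|_{B^{0}_{\infty,\infty}}\le\|G\|_{B^{s}_{q,1}}$. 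After multiplying by $2^{-j\alpha}$ from the dissipation and summing $2^{js}$ over the high frequencies $j\ge N$, the feedback carries a \emph{linear} coefficient $\sum_{j\ge N}2^{j(s+1+\epsilon-2\alpha)}B(t)$, which is made $\le\tfrac12$ by choosing $N$ large (here the hypothesis $s<2\alpha-1$ is used). The low frequencies are bounded directly by $\|G\|_{L^{1}_{t}L^{q}}\le B(t)$. This gives a closed inequality $\|G\|_{L^{1}_{t}B^{s}_{q,1}}\le B(t)$ without any Lipschitz assumption on $u$. You should replace the appeal to Lemma \ref{expo} by this mechanism.
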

\begin{proof}
Applying the frequency localization operator $\Delta_{j}$ to the equation (\ref{equation for G}) we get
\begin{align*}
\partial_{t}\Delta_{j}G + u\cdot \nabla\Delta_{j}G + \Lambda^{\alpha}\Delta_{j}G
= & -[\Delta_{j},u\cdot \nabla]G - \Delta_{j}([\mathcal{R}_{\alpha},u\cdot \nabla]\theta) + \Delta_{j}\Lambda^{\beta -\alpha}\partial_{1}\theta \\
= & f_{j}(t).
\end{align*}
Multiplying the above equation by $|\Delta_{j}G|^{q-2}\Delta_{j}G$ and integrating in the spatial variable we obtain
\begin{align*}
\frac{1}{q}\frac{d}{dt}\|\Delta_{j}G(t)\|_{L^{q}}^{q} + \int_{\mathbb{R}^{2}}\Delta_{j}G|\Delta_{j}G|^{q-2}\Lambda^{\alpha}\Delta_{j}G\, dx
= \int_{\mathbb{R}^{2}}f_{j}\Delta_{j}G|\Delta_{j}G|^{q-2}.
\end{align*}
Using Lemma \ref{Bernstein new} and Lemma \ref{fractional inequality}, we can obtain
\begin{align*}
\int_{\mathbb{R}^{2}}\Lambda^{\alpha}\Delta_{j}G |\Delta_{j}G|^{q-2} \, dx \geq c2^{j\alpha} \|\Delta_{j}G\|_{L^{q}}^{q},
\end{align*}
with some positive constant $c$ independent of $j$. So we can obtain
\begin{align*}
\frac{1}{q}\frac{d}{dt}\|\Delta_{j}G(t)\|_{L^{q}}^{q} + c2^{j\alpha}\|\Delta_{j}G(t)\|_{L^{q}}^{q} \leq \|f_{j}\|_{L^{q}}
\|\Delta_{j}G\|_{L^{q}}^{q-1}.
\end{align*}
Furthermore, we have
\begin{align*}
\frac{d}{dt}\|\Delta_{j}G(t)\|_{L^{q}} + c2^{j\alpha}\|\Delta_{j}G(t)\|_{L^{q}} \leq \|f_{j}\|_{L^{q}},
\end{align*}
then
\begin{align}\label{temp inequality}
\|\Delta_{j}G\|_{L_{t}^{1}L^{q}} \lesssim & 2^{-j\alpha}\|\Delta_{j}G(0)\|_{L^{q}} + 2^{j(1+\beta-2\alpha)}\|\Delta_{j}\theta\|_{L_{t}^{1}L^{q}} \nonumber \\
& + 2^{-j\alpha}\int_{0}^{t} \|\Delta_{j}([\mathcal{R}_{\alpha},u\cdot \nabla]\theta)\|_{L^{q}} \, d\tau
+ 2^{-j\alpha} \int_{0}^{t} \|[\Delta_{j},u\cdot \nabla]G\|_{L^{q}}\, d\tau.
\end{align}
Now we deal with the second term on the right hand side of the above inequality.
For every $j \in \mathbb{N}$, by Lemma \ref{est} we have
\begin{align*}
\|\Delta_{j}\theta\|_{L_{t}^{1}L^{q}} \leq 2^{-j\beta}B(t).
\end{align*}
For the third term on the right hand side of the inequality (\ref{temp inequality}).
Using Theorem \ref{crucial commutators} we have
\begin{align*}
& 2^{-j\alpha}\int_{0}^{t} \|\Delta_{j}([\mathcal{R}_{\alpha},u\cdot \nabla]\theta)\|_{L^{q}} \, d\tau  \\
\lesssim & 2^{j(1+\epsilon -2\alpha)} \int_{0}^{t} \|[\mathcal{R}_{\alpha},u\cdot \nabla]\theta\|_{B_{q,\infty}^{\alpha-1-\epsilon}}\,d\tau   \\
\lesssim & 2^{j(1+\epsilon -2\alpha)} \int_{0}^{t} \|u\|_{\dot{B}_{q,\infty}^{1-\epsilon}}
\left( \|\theta\|_{B_{\infty,\infty}^{0}} + \|\theta\|_{L^{\infty}} \right) \, d\tau   \\
\lesssim & 2^{j(1+\epsilon -2\alpha)} \|\omega\|_{L_{t}^{1}L^{q}} \|\theta_{0}\|_{L^{\infty}} \\
\lesssim & 2^{j(1+\epsilon -2\alpha)} B(t).
\end{align*}
For the fourth term on the right hand side of the inequality (\ref{temp inequality}).
Using Lemma \ref{te} in the appendix, we have
\begin{align*}
& 2^{-j\alpha} \int_{0}^{t} \|[\Delta_{j},u\cdot \nabla]G\|_{L^{q}}\, d\tau   \\
\lesssim & 2^{j(1+\epsilon -2\alpha)}\int_{0}^{t} 2^{j(\alpha-1-\epsilon)}\|[\Delta_{j},u\cdot\nabla]G\|_{L^{q}} \, d\tau   \\
\lesssim & 2^{j(1+\epsilon-2\alpha)} \int_{0}^{t} \left( \|G\|_{B_{q,\infty}^{\alpha-1}}+\|G\|_{L^{2}}+\|\theta_{0}\|_{L^{p}\cap L^{2}} \right)\|G\|_{B_{\infty,\infty}^{0}}\, d\tau    \\
\lesssim & 2^{j(1+\epsilon-2\alpha)} \int_{0}^{t} \left( \|G\|_{L^{q}}+\|G\|_{L^{2}}+\|\theta_{0}\|_{L^{p}\cap L^{2}} \right)
\|G\|_{B_{q,1}^{s}} \, d\tau    \\
\lesssim & 2^{j(1+\epsilon-2\alpha)} \int_{0}^{t}B(\tau)\|G\|_{B_{q,1}^{s}}\, d\tau,
\end{align*}
where $s\geq \frac{2}{q}$.
Let $q \in \mathbb{N}$ be a number chosen later, then we have
\begin{align*}
& \|G\|_{L_{t}^{1}B_{q,1}^{s}}    \\
= & \sum_{m<q}2^{ms}\|\Delta_{m}G\|_{L_{t}^{1}L^{q}} + \sum_{m\geq q}2^{ms}\|\Delta_{m}G\|_{L_{t}^{1}L^{q}} \\
\lesssim & 2^{qs}B(t) + \sum_{m\geq q}2^{ms} \{ 2^{-m\alpha}\|\Delta_{m}G(0)\|_{L^{q}} + 2^{m(1-2\alpha)}B(t) \\
& + 2^{m(1+\epsilon-2\alpha)}B(t) + 2^{m(1+\epsilon-2\alpha)}B(t)\|G\|_{L_{t}^{1}B_{q,1}^{s}} \}    \\
\lesssim & 2^{qs}B(t) + \sum_{m\geq q}2^{m(s-\alpha)}\|\Delta_{m}G(0)\|_{L^{q}} + \sum_{m\geq q}2^{m(s+1-2\alpha)}B(t) \\
& + \sum_{m\geq q}2^{m(s+1+\epsilon-2\alpha)}B(t) + \sum_{m\geq q}2^{m(s+1+\epsilon-2\alpha)}B(t)\|G\|_{L_{t}^{1}B_{q,1}^{s}}.
\end{align*}
If $\alpha > \frac{2+q}{2q}$ and $\frac{2}{q}\leq s<2\alpha -1$ we can take $\epsilon > 0$ so small in the above statements
 such that $s < \alpha$ and $s+1+\epsilon-2\alpha < 0$.
From Theorem \ref{estimateGq}, we know that for $2\leq q < \frac{20}{9}$
\begin{align*}
\max{\{ \frac{9q-12}{8q-8},\frac{2+q}{2q} \}} < \alpha < 1.
\end{align*}
Through simple calculations, we easily know that the range of $\alpha$ can be the largest one when we choose $q=\frac{20}{9} - \epsilon_{1}$ for $\epsilon_{1} > 0$ is arbitrary small.
So, we have
\begin{align*}
& \alpha \in \left[\frac{19}{20},1 \right) \\
& \beta \in \left(1-\alpha, \min{\left\{ 2-2\alpha,\frac{8}{3}\,\alpha-2,\frac{5 \alpha(1-\alpha)}{11-10\alpha} \right\}}\right) \\
& \frac{9}{10}\leq s < 2\alpha-1.
\end{align*}
Hence, we finally obtain
\begin{align*}
\|G\|_{L_{t}^{1}B_{q,1}^{s}} \leq B(t)2^{qs} + 2^{-q(2\alpha -s -s-\epsilon)} B(t) \|G\|_{L_{t}^{1}B_{q,1}^{s}}.
\end{align*}
Choosing $q$ such that $2^{-q(2\alpha-s-1-\epsilon)} B(t) \approx \frac{1}{2}$.
Thus we obtain for every $t \in \mathbb{R}^{+}$
\begin{align*}
\|G\|_{L_{t}^{1}B_{q,1}^{s}}\leq B(t).
\end{align*}
By embedding this immediately leads to
\begin{align*}
\|G\|_{L_{t}^{1}B_{\infty,1}^{s-\frac{9}{10}}} \leq B(t).
\end{align*}
\end{proof}

\subsection{Estimation of $\|\omega\|_{L_{t}^{1}B_{\infty,1}^{0,\gamma}}$ and $\|\theta\|_{L_{t}^{1}B_{\infty,1}^{0,\gamma}}$}

\begin{lemma}
Consider (\ref{GB}) with $\sigma = 0$ and $\gamma \geq 0$. Assume that $(\omega_{0},\theta_{0})$ satisfies the conditions in
Theorem \ref{global result}. Let $(\omega,\theta)$ be the corresponding solution of (\ref{GB}). Take $p$ large enough such that
$\frac{2}{p}+1<\alpha+\beta$. Then we have
\begin{align*}
\|\omega\|_{L_{t}^{1}L^{p}}\leq B(t).
\end{align*}
\end{lemma}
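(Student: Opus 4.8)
The plan is to imitate, at the large exponent $p$ fixed in the statement, the time‑iteration argument already used to prove Theorem~\ref{omega}. Write $\omega=G+\mathcal{R}_{\alpha}\theta$ with $G$ as in (\ref{equation for G}). Note first that the hypothesis $\tfrac{2}{p}+1<\alpha+\beta$ forces $p$ to be large, so in particular $\theta_{0}\in L^{p}$ because $\theta_{0}\in H^{1-\alpha}\cap B_{\infty,1}^{1-\alpha+\epsilon}\hookrightarrow L^{2/\alpha}\cap L^{\infty}$. The genuinely new ingredient compared with the earlier subsections is that $G$ is now bounded in $L^{1}_{t}L^{p}$ for \emph{every} $p\in[2,\infty]$: Theorem~\ref{GBq} gives $\|G\|_{L^{1}_{t}B_{\infty,1}^{0}}\le B(t)$, hence $\|G\|_{L^{1}_{t}L^{\infty}}\le B(t)$, and Theorem~\ref{GL2} gives $\|G\|_{L^{\infty}_{t}L^{2}}\le B(t)$; interpolating pointwise in time, $\|G(\tau)\|_{L^{p}}\le\|G(\tau)\|_{L^{2}}^{2/p}\|G(\tau)\|_{L^{\infty}}^{1-2/p}$, and H\"older in $\tau$ yields
\[
\|G\|_{L^{1}_{t}L^{p}}\le \|G\|_{L^{\infty}_{t}L^{2}}^{2/p}\,t^{2/p}\,\|G\|_{L^{1}_{t}L^{\infty}}^{1-2/p}\le B(t).
\]
Thus the task reduces to bounding $\|\mathcal{R}_{\alpha}\theta\|_{L^{1}_{t}L^{p}}$.

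For that term I would fix $\rho>1$ with $\beta/\rho>1-\alpha$ (possible since $\beta>1-\alpha$), and use $\|f\|_{L^{p}}\le\|f\|_{B^{0}_{p,1}}$ together with $\|g\|_{L^{1}_{t}}\le t^{1-1/\rho}\|g\|_{L^{\rho}_{t}}$, applied frequency by frequency, to get $\|\mathcal{R}_{\alpha}\theta\|_{L^{1}_{t}L^{p}}\le t^{1-1/\rho}\|\mathcal{R}_{\alpha}\theta\|_{\widetilde{L}^{\rho}_{t}B^{0}_{p,1}}$. Then split into low and high frequencies. For the low part, Theorem~\ref{generalized riesz} shows $\Delta_{-1}\mathcal{R}_{\alpha}$ is a convolution operator bounded on every $L^{p}$, so Lemma~\ref{Lp} gives $\|\Delta_{-1}\mathcal{R}_{\alpha}\theta\|_{\widetilde{L}^{\rho}_{t}B^{0}_{p,1}}\lesssim\|\theta\|_{L^{\rho}_{t}L^{p}}\le t^{1/\rho}\|\theta_{0}\|_{L^{p}}$. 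For the high part, Bernstein's inequality and Theorem~\ref{generalized riesz} give $\|\Delta_{j}\mathcal{R}_{\alpha}\theta\|_{L^{p}}\lesssim 2^{j(1-\alpha)}\|\Delta_{j}\theta\|_{L^{p}}$ for $j\ge0$, hence
\[
\|(\mathrm{Id}-\Delta_{-1})\mathcal{R}_{\alpha}\theta\|_{\widetilde{L}^{\rho}_{t}B^{0}_{p,1}}\lesssim\sum_{j\ge0}2^{j(1-\alpha-\beta/\rho)}\left(2^{j\beta/\rho}\|\Delta_{j}\theta\|_{L^{\rho}_{t}L^{p}}\right)\lesssim\sup_{j\ge0}2^{j\beta/\rho}\|\Delta_{j}\theta\|_{L^{\rho}_{t}L^{p}},
\]
the geometric factor being summable since $1-\alpha-\beta/\rho<0$. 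Applying Lemma~\ref{est} to the temperature equation (whose forcing term vanishes), exactly as in the proof of Theorem~\ref{omega}, bounds the last supremum by $\lesssim\|\theta_{0}\|_{L^{p}}+\|\theta_{0}\|_{L^{\infty}}\|\omega\|_{L^{1}_{t}L^{p}}$.

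Collecting everything gives a self‑improving inequality
\[
\|\omega\|_{L^{1}_{t}L^{p}}\le B(t)+C\,t^{1-1/\rho}\,\|\theta_{0}\|_{L^{\infty}}\,\|\omega\|_{L^{1}_{t}L^{p}},\qquad C=C(p,\rho).
\]
Defining $T_{0}$ by $C\,T_{0}^{1-1/\rho}\|\theta_{0}\|_{L^{\infty}}=\tfrac12$, the last term is absorbed on $[0,T_{0}]$, so $\|\omega\|_{L^{1}_{t}L^{p}}\le B(t)$ there; and since $\|\theta(T_{0})\|_{L^{p}}\le\|\theta_{0}\|_{L^{p}}$ and $\|\theta(T_{0})\|_{L^{\infty}}\le\|\theta_{0}\|_{L^{\infty}}$ by Lemma~\ref{Lp}, the same step run from $(u(T_{0}),\theta(T_{0}))$ works on $[T_{0},2T_{0}]$, and iterating over $[kT_{0},(k+1)T_{0}]$ yields $\|\omega\|_{L^{1}_{t}L^{p}}\le B(t)$ for all $t$.

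I expect the one non‑routine point to be precisely the $G$‑contribution at the large exponent $p$: this is where the full‑strength Besov bound $\|G\|_{L^{1}_{t}B^{0}_{\infty,1}}\le B(t)$ from Theorem~\ref{GBq} is indispensable, since the earlier subsections only provide $G\in L^{\infty}_{t}L^{q}$ with $q<\tfrac{20}{9}$, which is far from the required range. Everything else is, verbatim, the smoothing estimate of Lemma~\ref{est} plus the $t^{1-1/\rho}$ absorption already carried out for Theorem~\ref{omega}, so no new analytic obstacle should arise.
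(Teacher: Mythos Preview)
Your proposal is correct and follows essentially the same approach as the paper. The paper's own proof is extremely terse: it bounds $\|G\|_{L^{1}_{t}L^{p}}\le\|G\|_{L^{1}_{t}(B^{0}_{\infty,1}\cap L^{2})}\le B(t)$ in one line (using the embedding $B^{0}_{\infty,1}\cap L^{2}\hookrightarrow L^{p}$, which is your interpolation argument packaged differently), then writes $\|\mathcal{R}_{\alpha}\theta\|_{L^{1}_{t}B^{0}_{p,1}}\le t^{1-1/\rho}\|\mathcal{R}_{\alpha}\theta\|_{\widetilde{L}^{\rho}_{t}B^{0}_{p,1}}$ and simply says ``through the same idea in the proof of Theorem~\ref{omega}, we can easily get the conclusion.'' You have unpacked precisely that idea --- the low/high frequency split, the application of Lemma~\ref{est}, and the $T_{0}$--iteration --- so your argument and the paper's coincide.
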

\begin{proof}
For $\alpha +\beta >1$, we choose $\rho > 1$ such that $\frac{\beta}{\rho} > 1-\alpha$
From the definition of $G$ as in (\ref{equation for G}), we have
\begin{align*}
\|\omega\|_{L_{t}^{1}L^{p}} \leq & \|G\|_{L_{t}^{1}(B_{\infty,1}^{0}\cap L^{2})} + \|\mathcal{R}_{\alpha}\theta\|_{L_{t}^{1}B_{p,1}^{0}} \\
\leq & B(t) + t^{1-\frac{1}{\rho}} \|\mathcal{R}_{\alpha}\theta\|_{\widetilde{L}_{t}^{\rho}B_{p,1}^{0}}.
\end{align*}
Then through the same idea in the proof of Theorem \ref{omega}, we can easily get the conclusion.
\end{proof}
Now we state the main theorem in this section.
\begin{theorem}\label{88}
Consider (\ref{GB}) with $\sigma = 0$, $\gamma \geq 0$ and
$(\alpha,\beta)$ satisfies conditions as in Theorem \ref{GBq}. Assume that $(\omega_{0},\theta_{0})$ satisfies the conditions in
Theorem \ref{global result}. Let $(\omega,\theta)$ be the corresponding solution of (\ref{GB}). Then we have
\begin{align*}
& \|\omega\|_{L_{t}^{1}B_{\infty,1}^{0,\gamma}} \leq B(t) \\
& \|\theta\|_{L_{t}^{1}B_{\infty,1}^{0,\gamma}} \leq B(t).
\end{align*}
\end{theorem}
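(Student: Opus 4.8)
The plan is to estimate $\|\omega\|_{L_t^1 B_{\infty,1}^{0,\gamma}}$ first and then transfer the bound to $\theta$ through the modified Riesz transform. Since $\omega = G + \mathcal{R}_\alpha\theta$, and Theorem~\ref{GBq} already supplies $\|G\|_{L_t^1 B_{\infty,1}^0} \leq B(t)$, the extra logarithmic weight $(1+|j|)^\gamma$ costs only a factor growing polynomially in the frequency index. So I would split $\omega$ as $\omega = \Delta_{-1}\omega + (\mathrm{Id}-\Delta_{-1})G + (\mathrm{Id}-\Delta_{-1})\mathcal{R}_\alpha\theta$. The low-frequency block $\Delta_{-1}\omega$ is controlled by $\|\omega\|_{L_t^1 L^p}$ via Bernstein, which is bounded by $B(t)$ from the preceding lemma. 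For the high-frequency part of $G$, I would use that $\|G\|_{L_t^1 B_{q,1}^s} \leq B(t)$ for $\frac{9}{10}\le s<2\alpha-1$ (Theorem~\ref{GBq}); since $s-\frac{2}{q}>0$ with room to spare, the embedding $B_{q,1}^s \hookrightarrow B_{\infty,1}^{s-2/q}$ with $s-2/q>0$ gives summability of $\sum_j 2^{j(s-2/q)}(1+|j|)^\gamma\|\Delta_j G\|_{L^\infty}$ after absorbing $(1+|j|)^\gamma$ into a slightly smaller power of $2^j$; hence $\|(\mathrm{Id}-\Delta_{-1})G\|_{L_t^1 B_{\infty,1}^{0,\gamma}} \leq B(t)$.

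The remaining and genuinely coupled piece is $\mathcal{R}_\alpha\theta$. Here I would use a high–low frequency decomposition together with Lemma~\ref{est}, exactly as in the proof of Theorem~\ref{omega}, but carrying the weight $(1+|j|)^\gamma$ through. Pick $\rho>1$ with $\frac{\beta}{\rho}>1-\alpha$, which is possible because $\alpha+\beta>1$. For each dyadic block, $\|\Delta_j \mathcal{R}_\alpha\theta\|_{L^\infty} \lesssim 2^{j(1-\alpha)}\|\Delta_j\theta\|_{L^\infty}$ by Theorem~\ref{generalized riesz}, so that
\begin{align*}
\|\mathcal{R}_\alpha\theta\|_{\widetilde{L}_t^\rho B_{\infty,1}^{0,\gamma}}
\lesssim \|\Delta_{-1}\theta\|_{L_t^\rho L^\infty}
+ \sup_{j\in\mathbb{N}} 2^{j\frac{\beta}{\rho}} (1+j)^{\gamma}\|\Delta_j\theta\|_{L_t^\rho L^\infty} \cdot \sum_{j\ge 0} 2^{-j(\frac{\beta}{\rho}-1+\alpha)}(1+j)^{-\gamma}\cdot(1+j)^{2\gamma},
\end{align*}
where the geometric-type series converges because $\frac{\beta}{\rho}-1+\alpha>0$ dominates any polynomial weight. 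The supremum term is handled by Lemma~\ref{est} (applied with the $L^\infty$ endpoint, or with a large finite $L^p$ and then Bernstein), which bounds it by $\|\theta_0\|_{L^\infty} + \|\theta_0\|_{L^\infty}\|\omega\|_{L_t^1 L^p} \leq B(t)$ using the lemma just proved. Multiplying by $t^{1-1/\rho}$ from the embedding $\widetilde{L}_t^\rho \hookrightarrow L_t^1$ (on a finite interval) gives $\|\mathcal{R}_\alpha\theta\|_{L_t^1 B_{\infty,1}^{0,\gamma}} \leq B(t)$, completing the bound for $\omega$.

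Finally, for $\theta$ I would write $\mathcal{R}_\alpha^{-1}$ is not available, so instead I use $\theta$'s own transport–diffusion structure: $\theta$ solves \eqref{linear transport} with $f=0$, $u$ divergence-free, and by Lemma~\ref{expo} (or directly Lemma~\ref{est}) the smoothing in $\Lambda^\beta$ upgrades the regularity of $\theta$ by $\beta/\rho$ derivatives in $\widetilde{L}_t^\rho$. Since $\theta_0 \in B_{\infty,1}^{1-\alpha+\epsilon}$ and $1-\alpha+\epsilon+\beta \ge 1-\alpha+\beta > 0$, a decomposition identical to the one above (now with the weight $(1+|j|)^\gamma$ absorbed into the surplus $\epsilon$ and the surplus $\alpha+\beta-1$) yields $\|\theta\|_{L_t^1 B_{\infty,1}^{0,\gamma}} \leq B(t)$; the Lipschitz bound $\int_0^t\|\nabla u\|_{L^\infty}$ needed to apply Lemma~\ref{expo} is itself controlled by $\|\omega\|_{L_t^1 B_{\infty,1}^{0,\gamma}}$, which we have just bounded, since $\nabla u = \nabla\nabla^\perp\Delta^{-1}(\log(I-\Delta))^\gamma\omega$ and the $(\log(I-\Delta))^\gamma$ multiplier is exactly what the generalized Besov weight $(1+|j|)^\gamma$ is designed to absorb.

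The main obstacle is the bookkeeping of the logarithmic weight: one must verify that at every step the factor $(1+|j|)^\gamma$ (and even $(1+|j|)^{2\gamma}$, which appears when one multiplies the velocity's log-loss against the norm's log-weight) is harmless, i.e. strictly dominated by the geometric decay coming from $\frac{\beta}{\rho}-1+\alpha>0$ and from $s-\frac{2}{q}>0$. This is where the precise hypotheses $\alpha+\beta>1$ and $\alpha\in[\frac{19}{20},1)$ enter, guaranteeing a strictly positive spectral gap into which any finite power of $|j|$ can be absorbed. The transport estimates themselves (Lemmas~\ref{est},~\ref{expo}) are black boxes here; the work is purely in the dyadic summation with weights.
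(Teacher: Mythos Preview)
Your approach is essentially the paper's: split $\omega=G+\mathcal{R}_\alpha\theta$, control the $G$ part by the embedding $B_{q,1}^s\hookrightarrow B_{\infty,1}^{0,\gamma}$ (valid because $s-2/q>0$ absorbs any polynomial weight) together with Theorem~\ref{GBq}, and control $\mathcal{R}_\alpha\theta$ via Lemma~\ref{est} plus a convergent series $\sum_j 2^{-j(\alpha+\beta-1-2/p)}(1+|j|)^\gamma$. Two minor simplifications the paper makes that you could adopt: (i) Lemma~\ref{est} is applied with $\rho=1$ directly, so no detour through $\widetilde{L}_t^\rho$ and the factor $t^{1-1/\rho}$ is unnecessary; (ii) for $\theta$ itself the paper simply repeats the Lemma~\ref{est} argument (now with decay $2^{-j(\beta-2/p)}$ in the series), avoiding Lemma~\ref{expo} and the Lipschitz bound on $u$ altogether---your route via Lemma~\ref{expo} is correct but more elaborate than needed. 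Also, the ``$(1+|j|)^{2\gamma}$'' you flag does not actually arise in the $\mathcal{R}_\alpha\theta$ step, since the velocity enters Lemma~\ref{est} only through $\|\omega\|_{L_t^1 L^p}$, which carries no logarithmic loss.
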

\begin{proof}
Since for $s>\frac{2}{q}$ where $q$ as in Theorem \ref{GBq}, we have
\begin{align*}
\|G\|_{B_{\infty,1}^{0,\gamma}} = & \sum_{j\geq -1}(1+|j|)^{\gamma}\|\Delta_{j}G\|_{L^{\infty}} \\
\leq & \sum_{j \geq -1}(1+|j|)^{\gamma}2^{\frac{2}{q}j}2^{-js}2^{js}\|\Delta_{j}G\|_{L^{q}} \\
\leq & C \|G\|_{B_{q,1}^{s}}.
\end{align*}
From Theorem \ref{GBq}, we obtain
\begin{align*}
\|G\|_{L_{t}^{1}B_{\infty,1}^{0,\gamma}} \leq C \|G\|_{L_{t}^{1}B_{q,1}^{s}} \leq B(t).
\end{align*}
Using the definition of $G$ as in (\ref{equation for G}), we have
\begin{align*}
\|\omega\|_{L_{t}^{1}B_{\infty,1}^{0,\gamma}} & \leq \|G\|_{L_{t}^{1}B_{\infty,1}^{0,\gamma}}+\|\mathcal{R}_{\alpha}\theta\|_{L_{t}^{1}B_{\infty,1}^{0,\gamma}}  \\
& \leq B(t) + \|\mathcal{R}_{\alpha}\theta\|_{L_{t}^{1}B_{\infty,1}^{0,\gamma}}.
\end{align*}
For the second term, we have
\begin{align*}
& \|\mathcal{R}_{\alpha}\theta\|_{L_{t}^{1}B_{\infty,1}^{0,\gamma}}   \\
= & \sum_{j \geq -1} (1+|j|)^{\gamma}\|\Delta_{j}\mathcal{R}_{\alpha}\theta\|_{L_{t}^{1}L^{\infty}} \\
\lesssim & \|\Delta_{-1}\mathcal{R}_{\alpha}\theta\|_{L_{t}^{1}L^{\infty}} + \sum_{j \geq 0}(1+|j|)^{\gamma}\|\Delta_{j}\mathcal{R}_{\alpha}\theta\|_{L_{t}^{1}L^{\infty}}    \\
\lesssim & \|\Delta_{-1}\theta\|_{L_{t}^{1}L^{\infty}} + \sum_{j=0}^{\infty}2^{j(1-\alpha)}(1+|j|)^{\gamma}\|\Delta_{j}\theta\|_{L_{t}^{1}L^{\infty}}    \\
\lesssim & t\|\theta_{0}\|_{L^{2}} + \sum_{j=0}^{\infty}2^{-j(\beta+\alpha-1-\frac{2}{p})}(1+|j|)^{\gamma}2^{j\beta}\|\Delta_{j}\theta\|_{L_{t}^{1}L^{p}}    \\
\lesssim & t\|\theta_{0}\|_{L^{2}} + \|\theta_{0}\|_{L^{p}} + \|\theta_{0}\|_{L^{\infty}}\|\omega\|_{L_{t}^{1}L^{p}}    \\
\lesssim & B(t),
\end{align*}
where $\frac{2}{p}+1<\alpha+\beta$ and we have used Lemma \ref{est} and the estimation of $\|\omega\|_{L_{t}^{1}L^{p}}$.
Hence we obtain
\begin{align*}
\|\omega\|_{L_{t}^{1}B_{\infty,1}^{0,\gamma}} \leq B(t).
\end{align*}
For $\theta$ we have
\begin{align*}
\|\theta\|_{L_{t}^{1}B_{\infty,1}^{0,\gamma}} = & \sum_{j=-1}^{\infty}(1+|j|)^{\gamma}\|\Delta_{j}\theta\|_{L_{t}^{1}L^{\infty}}    \\
\lesssim & t\|\theta_{0}\|_{L^{2}} + \sum_{j=0}^{\infty} (1+|j|)^{\gamma}2^{j\frac{2}{p}}2^{-j\beta}2^{j\beta}\|\Delta_{j}\theta\|_{L_{t}^{1}L^{p}}  \\
\lesssim & t \|\theta_{0}\|_{L^{2}} + \|\theta_{0}\|_{L^{p}} + \|\theta_{0}\|_{L^{\infty}} \|\omega\|_{L_{t}^{1}L^{p}}  \\
\lesssim & B(t),
\end{align*}
where $\frac{2}{p}+1 <\alpha+\beta$. Thus, the proof is complete.
\end{proof}

\subsection{Estimation of $\|\theta\|_{\widetilde{L}_{t}^{\infty}(H^{1-\alpha}\cap
B_{\infty,1}^{1-\alpha+\epsilon})}$, $\|\omega\|_{L_{t}^{\infty}L^{p}}$ and so on}

The following is the main result of this subsection.
\begin{theorem}
Consider (\ref{GB}) with $\sigma = 0$, $\gamma \geq 0$ and
$(\alpha,\beta)$ satisfies conditions as in Theorem \ref{GBq}. Assume that $(\omega_{0},\theta_{0})$ satisfies
the conditions in Theorem \ref{global result}. Let $(\omega,\theta)$ be the corresponding solution of (\ref{GB}).
Then for arbitrary small $\epsilon >0$ and any $p\geq 2$ we have
\begin{align*}
& \|\theta\|_{\widetilde{L}_{t}^{\infty}(H^{1-\alpha}\cap B_{\infty,1}^{1-\alpha +\epsilon})}
+ \|\theta\|_{\widetilde{L}_{t}^{1}(H^{1-\alpha +\beta}\cap B_{\infty,1}^{1-\alpha+\beta+\epsilon})} \leq B(t)  \\
& \|\omega\|_{L_{t}^{\infty}L^{p}} \leq B(t).
\end{align*}
\end{theorem}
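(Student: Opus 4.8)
The plan is to establish the three estimates in the order forced by their dependencies: a Lipschitz bound on $u$, then the transport--diffusion estimates for $\theta$, and finally the $L^{p}$ bound for $\omega$.

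\emph{Step 1 (velocity gradient).} Since $\sigma=0$, $u=\nabla^{\bot}\Delta^{-1}\bigl(\log(\mathrm{Id}-\Delta)\bigr)^{\gamma}\omega$, so on each dyadic block $\nabla u$ is a zeroth order Fourier multiplier composed with $\bigl(\log(\mathrm{Id}-\Delta)\bigr)^{\gamma}$; this gives $\|\Delta_{j}\nabla u\|_{L^{\infty}}\lesssim(1+|j|)^{\gamma}\|\Delta_{j}\omega\|_{L^{\infty}}$ for $j\ge0$, while for the lowest block $\|\Delta_{-1}\nabla u\|_{L^{\infty}}\lesssim\|\Delta_{-1}\omega\|_{L^{2}}\lesssim\|\Delta_{-1}G\|_{L^{2}}+\|\Delta_{-1}\mathcal{R}_{\alpha}\theta\|_{L^{2}}\lesssim\|G\|_{L^{2}}+\|\theta_{0}\|_{L^{2}}$ (on frequencies $\lesssim1$ the symbol of $\mathcal{R}_{\alpha}=\Lambda^{1-\alpha}\mathcal{R}$ is bounded, and $\|\theta(t)\|_{L^{2}}\le\|\theta_{0}\|_{L^{2}}$ by Lemma~\ref{Lp}). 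Summing in $j$ and invoking Theorem~\ref{GL2} and Theorem~\ref{88} yields $U(t):=\int_{0}^{t}\|\nabla u(\tau)\|_{L^{\infty}}\,d\tau\le B(t)$.

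\emph{Step 2 (bounds for $\theta$).} I would then feed $U(t)\le B(t)$ into Lemma~\ref{expo}, applied to the temperature equation (a transport--diffusion equation with $\Lambda^{\beta}$ and $f=0$), with $\rho=1$ and two parameter choices. Taking $(s,p,q)=(1-\alpha,2,2)$, admissible because $0<1-\alpha\le\tfrac1{20}<1$, gives $\|\theta\|_{\widetilde L_{t}^{\infty}H^{1-\alpha}}+\|(\mathrm{Id}-\Delta_{-1})\theta\|_{\widetilde L_{t}^{1}H^{1-\alpha+\beta}}\le Ce^{CU(t)}\|\theta_{0}\|_{H^{1-\alpha}}$; taking $(s,p,q)=(1-\alpha+\epsilon,\infty,1)$ gives the analogous bound in $B_{\infty,1}^{1-\alpha+\epsilon}$ and $B_{\infty,1}^{1-\alpha+\beta+\epsilon}$. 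The two missing $\Delta_{-1}$-pieces of the $\widetilde L^{1}_{t}$ norms are absorbed by $\|\Delta_{-1}\theta\|_{L^{1}_{t}L^{2}}\le t\|\theta_{0}\|_{L^{2}}$ (Bernstein for the $B_{\infty,1}$ one). This is exactly the claimed bound on $\theta$.

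\emph{Step 3 ($\omega$ in $L^{p}$).} I would split $\omega=G+\mathcal{R}_{\alpha}\theta=G+\mathcal{R}(\Lambda^{1-\alpha}\theta)$. Boundedness of $\mathcal{R}$ on $L^{p}$ reduces $\|\mathcal{R}_{\alpha}\theta\|_{L^{p}}$ to $\|\Lambda^{1-\alpha}\theta\|_{L^{p}}$, which for $p=2$ equals $\|\theta\|_{\dot H^{1-\alpha}}\le B(t)$ and for $p>2$ follows from a low/high frequency split together with $\|\Delta_{j}\theta\|_{L^{p}}\le\|\Delta_{j}\theta\|_{L^{2}}^{2/p}\|\Delta_{j}\theta\|_{L^{\infty}}^{1-2/p}$ and Step~2 (the factor $2^{-j\epsilon(1-2/p)}$ produced makes the series converge). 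Hence $\|\mathcal{R}_{\alpha}\theta\|_{L_{t}^{\infty}L^{p}}\le B(t)$ for all $p\ge2$, and in particular $\omega\in L^{\infty}_{t}L^{2}$. For $G$, the range $p\in[2,\tfrac{20}{9})$ is already covered by Theorems~\ref{GL2} and~\ref{estimateGq}; for $p>2$ I would run the $L^{p}$ maximum principle (Lemma~\ref{Lpestimate}) on $(\ref{equation for G})$,
\begin{align*}
\|G(t)\|_{L^{p}}\le\|G_{0}\|_{L^{p}}+\int_{0}^{t}\bigl(\|[\mathcal{R}_{\alpha},u\cdot\nabla]\theta\|_{L^{p}}+\|\Lambda^{1+\beta-\alpha}\theta\|_{L^{p}}\bigr)\,d\tau ,
\end{align*}
noting $G_{0}=\omega_{0}-\mathcal{R}_{\alpha}\theta_{0}\in L^{p}$. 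The term $\|\Lambda^{1+\beta-\alpha}\theta\|_{L_{t}^{1}L^{p}}\le B(t)$ by the same split and interpolation, now from $\theta\in\widetilde L^{1}_{t}H^{1-\alpha+\beta}\cap\widetilde L^{1}_{t}B_{\infty,1}^{1-\alpha+\beta+\epsilon}$ (the two exponents coincide up to $\epsilon$). For the commutator, Theorem~\ref{crucial commutators}(1) with $\sigma=0$, $s=0$, $q=1$ and $B_{p,1}^{0}\hookrightarrow L^{p}$ gives
\begin{align*}
\|[\mathcal{R}_{\alpha},u\cdot\nabla]\theta\|_{L^{p}}\lesssim\|u\|_{\dot B_{p,\infty}^{1-\epsilon}}\bigl(\|\theta\|_{B_{\infty,1}^{1-\alpha+\epsilon}}+\|\theta\|_{L^{\infty}}\bigr),
\end{align*}
and $\|u\|_{\dot B_{p,\infty}^{1-\epsilon}}\lesssim\|\omega\|_{L^{2}}+\|\omega\|_{L^{p}}$ (the low homogeneous frequencies being controlled by Bernstein $L^{2}\to L^{p}$ for $p>2$ with $\epsilon$ small, using $\omega\in L^{\infty}_{t}L^{2}$). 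Since $\|\omega\|_{L^{p}}\le\|G\|_{L^{p}}+B(t)$ and the $\theta$-factors are $\le B(t)$ by Step~2 and Lemma~\ref{Lp}, the inequality closes as $\|G(t)\|_{L^{p}}\le B(t)+\int_{0}^{t}B(\tau)\|G(\tau)\|_{L^{p}}\,d\tau$, and Gronwall finishes it; adding back $\mathcal{R}_{\alpha}\theta$ gives $\|\omega\|_{L_{t}^{\infty}L^{p}}\le B(t)$ for every $p\ge2$.

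\emph{Main obstacle.} The hard point is Step~3 for large $p$: the naive bound $\|\omega\|_{L^{p}}\le\|\omega_{0}\|_{L^{p}}+\|\partial_{1}\theta\|_{L^{1}_{t}L^{p}}$ from the vorticity equation is useless, because it costs a full derivative while the $\Lambda^{\beta}$ smoothing in time supplies only $\approx\beta<1$. Working with $G=\omega-\mathcal{R}_{\alpha}\theta$ replaces the forcing $\partial_{1}\theta$ by $\Lambda^{\beta-\alpha}\partial_{1}\theta$, which costs only $\approx\beta$ derivatives — exactly what the regularity of $\theta$ from Step~2 delivers — at the price of the modified Riesz commutator, which is then absorbed by Gronwall. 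The $\epsilon$-losses in the Besov indices are harmless: they lie within the slack in the hypotheses on $(\alpha,\beta)$ and, for $p>2$, actually furnish the geometric decay that makes the interpolation series summable, the borderline $p=2$ being handled directly.
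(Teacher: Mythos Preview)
Your proposal is correct and follows essentially the same three-step strategy as the paper (Lipschitz control on $u$, then Lemma~\ref{expo} for $\theta$, then the $L^p$ maximum principle on the $G$-equation closed by Gronwall). The only noteworthy difference is in Step~1: the paper rederives an estimate $\|\omega\|_{L_t^1 B_{\infty,1}^{\epsilon}}\le B(t)$ (splitting $\omega=G+\mathcal{R}_\alpha\theta$ and using Theorem~\ref{GBq} plus Lemma~\ref{est}) to reach $\|u\|_{L_t^1 B_{\infty,1}^1}\le B(t)$, whereas you invoke Theorem~\ref{88} directly to get $U(t)\le B(t)$; your route is shorter and equally valid. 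In Step~3 the paper closes Gronwall on $\|\omega\|_{L^p}$ rather than on $\|G\|_{L^p}$, and bounds $\|\Lambda^{\beta}\mathcal{R}_\alpha\theta\|_{L_t^1 L^p}$ by a fresh application of Lemma~\ref{expo} in $B_{p,1}^{1-\alpha}$ rather than by interpolating the Step~2 bounds; both variants work, and your interpolation argument with the $2^{-j\epsilon(1-2/p)}$ factor is a clean way to handle all $p>2$ simultaneously (the case $p=2$ being immediate from $\|G\|_{L_t^\infty L^2}\le B(t)$ and $\|\mathcal{R}_\alpha\theta\|_{L^2}\le\|\theta\|_{\dot H^{1-\alpha}}$).
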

\begin{proof}
Our proof can be divided into three steps.
Step 1: let us give the estimation of $\|u\|_{L_{t}^{1}B_{\infty,1}^{1}}$.
Since for $s>\frac{2}{q}$ where $q$ as in Theorem \ref{GBq}, take $\epsilon>0$ such that
$s-\frac{2}{q}-\epsilon >0$, we have
\begin{align*}
\|G\|_{B_{\infty,1}^{\epsilon}} & = \sum_{j\geq -1}2^{j\epsilon}\|\Delta_{j}G\|_{L^{\infty}} \\
& \leq \sum_{j\geq -1}2^{j\epsilon}2^{j\frac{2}{q}}2^{-js}2^{js}\|\Delta_{j}G\|_{L^{q}} \\
& \leq \sum_{j\geq -1}2^{-j(s-\frac{2}{q}-\epsilon)}2^{js}\|\Delta_{j}G\|_{L^{q}}   \\
& \leq C \|G\|_{B_{q,1}^{s}}.
\end{align*}
From Theorem \ref{GBq}, we obtain $\|G\|_{L_{t}^{1}B_{\infty,1}^{\epsilon}} \lesssim \|G\|_{L_{t}^{1}B_{q,1}^{s}} \lesssim B(t)$.
Using the definition of $G$ as in (\ref{equation for G}), we have
\begin{align*}
\|\omega\|_{L_{t}^{1}B_{\infty,1}^{\epsilon}} \leq \|G\|_{L_{t}^{1}B_{\infty,1}^{\epsilon}} +
\|\mathcal{R}_{\alpha} \theta\|_{L_{t}^{1}B_{\infty,1}^{\epsilon}} \leq B(t) + \|\mathcal{R}_{\alpha}\theta\|_{L_{t}^{1}B_{\infty,1}^{\epsilon}}.
\end{align*}
For the second term, we have
\begin{align*}
\|\mathcal{R}_{\alpha}\theta\|_{L_{t}^{1}B_{\infty,1}^{\epsilon}} & = \sum_{j\geq -1} 2^{j\epsilon} \|\Delta_{j}\mathcal{R}_{\alpha} \theta\|_{L_{t}^{1}L^{\infty}} \\
& \lesssim \|\Delta_{-1}\mathcal{R}_{\alpha}\theta\|_{L_{t}^{1}L^{\infty}} + \sum_{j\geq 0}2^{j\epsilon}\|\Delta_{j}\mathcal{R}_{\alpha}\theta\|_{L_{t}^{1}L^{\infty}}  \\
& \lesssim \|\Delta_{-1}\theta\|_{L_{t}^{1}L^{\infty}} + \sum_{j=0}^{\infty}2^{j(1-\alpha)}2^{j\epsilon}
\|\Delta_{j}\theta\|_{L_{t}^{1}L^{\infty}}  \\
& \lesssim t\|\theta_{0}\|_{L^{2}} + \sum_{j=0}^{\infty}2^{-j(\beta+\alpha-1-\frac{2}{p})}2^{j\epsilon}2^{j\beta}
\|\Delta_{j}\theta\|_{L_{t}^{1}L^{p}}   \\
& \lesssim t\|\theta_{0}\|_{L^{2}} + \|\theta_{0}\|_{L^{p}} + \|\theta_{0}\|_{L^{\infty}}\|\omega\|_{L_{t}^{1}L^{p}} \\
& \lesssim B(t).
\end{align*}
where $\frac{2}{p}+1+ \epsilon < \alpha +\beta$ and we have used Lemma \ref{est} and the estimation of $\|\omega\|_{L_{t}^{1}L^{p}}$.
Hence, we obtain $\|\omega\|_{L_{t}^{1}B_{\infty,1}^{\epsilon}} \leq B(t)$.
On the other hand, by Hardy-Littlewood-Sobolev inequality, we obtain
\begin{align*}
\|\Delta_{-1}u\|_{L_{t}^{1}L^{\infty}} \lesssim \|\Delta_{-1}\Lambda^{\epsilon-1}\omega\|_{L_{t}^{1}L^{\infty}}
\lesssim \|\Delta_{-1}\Lambda^{\epsilon-1}\omega\|_{L_{t}^{1}L^{\frac{2}{\epsilon}}}
\lesssim \|\omega\|_{L_{t}^{1}L^{2}} \lesssim B(t).
\end{align*}
From the above statements, we finally get
\begin{align*}
\|u\|_{L_{t}^{1}B_{\infty,1}^{1}} & \lesssim \|\Delta_{-1}u\|_{L_{t}^{1}L^{\infty}}
+ \sum_{q \in \mathbb{N}}\|\Delta_{q}\nabla u\|_{L_{t}^{1}L^{\infty}} \\
& \lesssim \|\Delta_{-1}u\|_{L_{t}^{1}L^{\infty}} + \|\omega\|_{L_{t}^{1}B_{\infty,1}^{\epsilon}} \\
& \lesssim B(t).
\end{align*}
Step 2: estimation of $\theta$.
Using Lemma \ref{expo} and the result in step 1, we can obtain
\begin{align*}
& \|\theta\|_{\widetilde{L}_{t}^{\infty}(H^{1-\alpha}\cap B_{\infty,1}^{1-\alpha+\epsilon})}
+ \|\theta\|_{\widetilde{L}_{t}^{1}(H^{1-\alpha+\beta}\cap B_{\infty,1}^{1-\alpha+\beta+\epsilon})} \\
\lesssim & \|\theta\|_{\widetilde{L}_{t}^{\infty}(H^{1-\alpha}\cap B_{\infty,1}^{1-\alpha+\epsilon})}
+ \|(\text{Id}-\Delta_{-1})\theta\|_{\widetilde{L}_{t}^{1}(H^{1-\alpha+\beta}\cap B_{\infty,1}^{1-\alpha+\beta+\epsilon})} + \|\Delta_{-1}\theta\|_{L_{t}^{1}(L^{2}\cap L^{\infty})} \\
\lesssim & e^{C \|\nabla u\|_{L_{t}^{1}L^{\infty}}}\|\theta^{0}\|_{H^{1-\alpha}\cap B_{\infty,1}^{1-\alpha+\epsilon}}
+ t\|\theta^{0}\|_{L^{2}\cap L^{\infty}} \\
\lesssim & e^{C \|u\|_{L_{t}^{1}B_{\infty,1}^{1}}} \\
\lesssim & B(t).
\end{align*}
Step 3: estimation of $\omega$.
By the equation (\ref{equation for G}) and Lemma \ref{Lpestimate}, we have
\begin{align*}
\|G(t)\|_{L^{p}} \leq \|G_{0}\|_{L^{p}} + \int_{0}^{t}\| [\mathcal{R}_{\alpha},u\cdot \nabla]\theta(\tau) \|_{L^{p}}\, d\tau
+\int_{0}^{t}\| \Lambda^{\beta}\mathcal{R}_{\alpha}\theta(\tau) \|_{L^{p}} \, d\tau.
\end{align*}
For the first integral of the RHS, using Theorem \ref{crucial commutators} with $s=0$ yields
\begin{align*}
\|[\mathcal{R}_{\alpha},u\cdot \nabla]\theta(\tau)\|_{L^{p}} & \leq
\|[\mathcal{R}_{\alpha},u\cdot \nabla]\theta(\tau)\|_{B_{p,1}^{0}}  \\
& \lesssim \|u(\tau)\|_{\dot{B}_{p,\infty}^{1-\epsilon}}
\left( \|\theta(\tau)\|_{B_{\infty,1}^{1-\alpha+\epsilon}} + \|\theta(\tau)\|_{L^{\infty}} \right) \\
& \lesssim B(\tau) \|\omega(\tau)\|_{L^{p}}.
\end{align*}
For the second integral of the RHS, we have
\begin{align*}
& \int_{0}^{1}\| \Lambda^{\beta} \mathcal{R}_{\alpha} \theta(\tau) \|_{L^{p}} \, d\tau \\
\lesssim & \|\Delta_{-1}\theta\|_{L_{t}^{1}L^{p}} + \|(\text{Id}-\Delta_{-1})\theta\|_{L_{t}^{1}B_{p,1}^{1-\alpha+\beta}} \\
\lesssim & \|\theta\|_{L_{t}^{1}L^{p}} + e^{C \|\nabla u\|_{L_{t}^{1}L^{\infty}}} \|\theta_{0}\|_{B_{p,1}^{1-\alpha}} \\
\lesssim & B(t).
\end{align*}
Hence, gathering the upper estimates we obtain
\begin{align*}
\|\omega(t)\|_{L^{p}} & \leq \|G(t)\|_{L^{p}} + \|\mathcal{R}_{\alpha}\theta(t)\|_{L^{p}} \\
& \leq B(t) + \int_{0}^{t} B(\tau)\|\omega(\tau)\|_{L^{p}} \, d\tau.
\end{align*}
Gronwall's inequality yields
\begin{align*}
\|\omega(t)\|_{L^{p}} \leq B(t).
\end{align*}
\end{proof}

At this stage, we can construct approximation system and use similar methods in \cite{GWPFABNSSWCD}
to prove the existence of the solution.


\section{Uniqueness}

In this section, we prove the uniqueness.
For convenience of the reader, we clarify some notations.
Let $(w^{1},\theta^{1})$ and $(\omega^{2},\theta^{2})$ be two solutions of system (\ref{GB}) with $\sigma =0$, $\gamma \geq 0$.
$u^{1}$ and $u^{2}$ be the corresponding velocity fields, namely
\begin{align*}
u^{j}=\nabla^{\bot}\psi^{j}, \quad \Delta \psi^{j} = (\log(\text{Id}-\Delta))^{\gamma}\omega^{j}, \quad j=1,2.
\end{align*}
Let $v^{j} = (\log(\text{Id}-\Delta))^{-\gamma}u^{j}$, $j=1,2$.
Denote
\begin{align*}
u=u^{2}-u^{1},\quad \theta=\theta^{2}-\theta^{1},\quad v=v^{2}-v^{1},\quad p=p^{2}-p^{1}.
\end{align*}

Then we give two crucial estimates
\begin{lemma}\label{finallemma1}
Assume that $\theta$ satisfies
\begin{align}\label{difference theta}
\partial_{t}\theta + u\cdot \nabla \theta^{1} + u^{2}\cdot \nabla \theta + \Lambda^{\beta}\theta = 0, \quad 0\leq \beta \leq 2.
\end{align}
Then, for any $t > 0$,
\begin{align}
\|\theta(t)\|_{B_{2,\infty}^{-1}} \leq & \|\theta(0)\|_{B_{2,\infty}^{-1}} + C \int_{0}^{t} \|v(s)\|_{L^{2}}\|\theta^{1}\|_{B_{\infty,1}^{1-\alpha,\gamma}}\, ds   \nonumber \\
& + C \int_{0}^{1} \|\omega^{2}(s)\|_{B_{\infty,1}^{0,\gamma}} \|\theta(s)\|_{B_{2,\infty}^{-\alpha}} \, ds.
\end{align}
\end{lemma}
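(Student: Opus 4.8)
The plan is to run a dyadic $L^{2}$ energy estimate on \eqref{difference theta} and then measure everything in $B_{2,\infty}^{-1}$. Applying $\Delta_{j}$ to \eqref{difference theta} gives
\[
\partial_{t}\Delta_{j}\theta+u^{2}\cdot\nabla\Delta_{j}\theta+\Lambda^{\beta}\Delta_{j}\theta=-\Delta_{j}(u\cdot\nabla\theta^{1})-[\Delta_{j},u^{2}\cdot\nabla]\theta .
\]
Pairing with $\Delta_{j}\theta$ in $L^{2}$, the transport term drops since $\operatorname{div}u^{2}=0$ and $\langle\Lambda^{\beta}\Delta_{j}\theta,\Delta_{j}\theta\rangle\geq0$ for $\beta\geq0$, so after dividing by $\|\Delta_{j}\theta\|_{L^{2}}$ (or an $\varepsilon$-regularization) and integrating in time,
\[
\|\Delta_{j}\theta(t)\|_{L^{2}}\leq\|\Delta_{j}\theta(0)\|_{L^{2}}+\int_{0}^{t}\big(\|\Delta_{j}(u\cdot\nabla\theta^{1})\|_{L^{2}}+\|[\Delta_{j},u^{2}\cdot\nabla]\theta\|_{L^{2}}\big)\,ds .
\]
Multiplying by $2^{-j}$, taking $\sup_{j\geq-1}$, and pulling the supremum inside the time integral reduces the proof to the two estimates
\[
\|u\cdot\nabla\theta^{1}\|_{B_{2,\infty}^{-1}}\lesssim\|v\|_{L^{2}}\,\|\theta^{1}\|_{B_{\infty,1}^{1-\alpha,\gamma}},\qquad \sup_{j\geq-1}2^{-j}\|[\Delta_{j},u^{2}\cdot\nabla]\theta\|_{L^{2}}\lesssim\|\omega^{2}\|_{B_{\infty,1}^{0,\gamma}}\,\|\theta\|_{B_{2,\infty}^{-\alpha}} .
\]

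For the first estimate I would keep $u\cdot\nabla\theta^{1}$ as it stands (writing the whole product as a full divergence would destroy the decay needed below) and apply Bony's decomposition $u_{i}\partial_{i}\theta^{1}=T_{u_{i}}\partial_{i}\theta^{1}+T_{\partial_{i}\theta^{1}}u_{i}+R(u_{i},\partial_{i}\theta^{1})$. The structural input is $u=(\log(\mathrm{Id}-\Delta))^{\gamma}v$: on frequencies $\sim2^{k}$ the operator $(\log(\mathrm{Id}-\Delta))^{\gamma}$ only costs a weight $\lesssim(1+|k|)^{\gamma}$, concretely $\|S_{k-1}u\|_{L^{2}}\lesssim(1+|k|)^{\gamma}\|v\|_{L^{2}}$ and $\|\Delta_{k}u\|_{L^{2}}\lesssim(1+|k|)^{\gamma}\|\widetilde{\Delta}_{k}v\|_{L^{2}}$ by Plancherel. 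In the pieces whose output frequency is carried by $\theta^{1}$ (essentially $T_{u}\cdot\nabla\theta^{1}$) the weight $(1+|k|)^{\gamma}$ multiplies $\|\Delta_{k}\theta^{1}\|_{L^{\infty}}$ and is collected by $\|\theta^{1}\|_{B_{\infty,1}^{1-\alpha,\gamma}}$, using $\sum_{k}(1+|k|)^{\gamma}\|\Delta_{k}\theta^{1}\|_{L^{\infty}}\lesssim_{\alpha,\gamma}\sum_{k}2^{k(1-\alpha)}(1+|k|)^{\gamma}\|\Delta_{k}\theta^{1}\|_{L^{\infty}}$. In the pieces whose output frequency is carried by $u$ (namely $T_{\nabla\theta^{1}}u$, which after Bernstein involves $2^{-j}\|S_{j-1}\nabla\theta^{1}\|_{L^{\infty}}$, and the high--high term $R$, recast via $\operatorname{div}u=0$ as $\sum_{k}\operatorname{div}(\Delta_{k}u\,\widetilde{\Delta}_{k}\theta^{1})$), the $B_{2,\infty}^{-1}$ measurement supplies the decaying factor $2^{-j(1-\alpha)}$ which, thanks to $1-\alpha>0$, dominates $(1+|j|)^{\gamma}$ and makes the series converge; the block $\Delta_{-1}u$ is harmless since $\|\Delta_{-1}u\|_{L^{2}}\lesssim\|v\|_{L^{2}}$.

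For the commutator I would split $[\Delta_{j},u^{2}\cdot\nabla]\theta$ by Bony into the genuine commutator $\sum_{|k-j|\leq4}[\Delta_{j},S_{k-1}u^{2}\cdot\nabla]\Delta_{k}\theta$, the term $\sum_{|k-j|\leq4}[\Delta_{j},\Delta_{k}u^{2}\cdot\nabla]S_{k-1}\theta$, and the remainder $\sum_{k\geq j-3}\Delta_{j}\operatorname{div}(\Delta_{k}u^{2}\,\widetilde{\Delta}_{k}\theta)$. For the first two I apply Lemma~\ref{commutator}(2) with $\delta=\alpha\in(0,1)$, writing $\Delta_{j}$ as convolution with $2^{jd}h(2^{j}\cdot)$; this produces $2^{-j\alpha}\|S_{k-1}u^{2}\|_{\dot{B}_{\infty,\infty}^{\alpha}}\,\|\nabla\Delta_{k}\theta\|_{L^{2}}$, and since $u^{2}=\nabla^{\bot}\Delta^{-1}(\log(\mathrm{Id}-\Delta))^{\gamma}\omega^{2}$ one has $\|\Delta_{m}u^{2}\|_{L^{\infty}}\lesssim2^{-m}(1+|m|)^{\gamma}\|\Delta_{m}\omega^{2}\|_{L^{\infty}}$, hence $\|S_{k-1}u^{2}\|_{\dot{B}_{\infty,\infty}^{\alpha}}\lesssim\sum_{m<k}2^{-m(1-\alpha)}(1+|m|)^{\gamma}\|\Delta_{m}\omega^{2}\|_{L^{\infty}}\lesssim\|\omega^{2}\|_{B_{\infty,1}^{0,\gamma}}$ because $1-\alpha>0$; after the $2^{-j}$ and the $2^{k}$ from $\nabla$, what remains is $\sup_{j}2^{-j\alpha}\|\Delta_{j}\theta\|_{L^{2}}=\|\theta\|_{B_{2,\infty}^{-\alpha}}$, which is exactly why the right-hand side naturally sits at level $-\alpha$. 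The remainder is handled directly: $\|\Delta_{j}\operatorname{div}(\Delta_{k}u^{2}\widetilde{\Delta}_{k}\theta)\|_{L^{2}}\lesssim2^{j}\|\Delta_{k}u^{2}\|_{L^{\infty}}\|\widetilde{\Delta}_{k}\theta\|_{L^{2}}$, and writing $\|\widetilde{\Delta}_{k}\theta\|_{L^{2}}\lesssim2^{k\alpha}\|\theta\|_{B_{2,\infty}^{-\alpha}}$ turns the $2^{-k}$ from $\Delta_{k}u^{2}$ into a summable $2^{-k(1-\alpha)}$, again yielding $\|\omega^{2}\|_{B_{\infty,1}^{0,\gamma}}\|\theta\|_{B_{2,\infty}^{-\alpha}}$. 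Everywhere the low-frequency block $\Delta_{-1}u^{2}$ contributes terms bounded by $\|\omega^{2}\|_{L^{2}\cap L^{p}}$, which is a priori controlled and absorbed into $C$.

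The main obstacle is the bookkeeping of the logarithmic multiplier $(\log(\mathrm{Id}-\Delta))^{\gamma}$: one must organise every product and commutator so that the polynomial weights $(1+|j|)^{\gamma}$ it generates are collected by the factor living in a \emph{generalized} Besov space ($\theta^{1}\in B_{\infty,1}^{1-\alpha,\gamma}$, $\omega^{2}\in B_{\infty,1}^{0,\gamma}$) and never land on $v$ (controlled only in $L^{2}$) or $\theta$ (controlled only in $B_{2,\infty}^{-\alpha}$). This succeeds precisely because $1-\alpha>0$, so that every dangerous high-frequency piece carries an exponential factor $2^{-cj}$ that beats the polynomial weight. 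A secondary nuisance is the low-frequency part of the velocities, whose Biot--Savart representation is not $L^{2}$-subcritical in the plane and must be treated separately with the a priori bounds $v\in L^{2}$ and $\omega^{2}\in L^{2}\cap L^{p}$.
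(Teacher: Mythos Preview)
Your approach is essentially the same as the paper's: localize, run an $L^{2}$ energy estimate on $\Delta_{j}\theta$, and control the two forcing terms by Bony's paraproduct decomposition, routing the logarithmic weights $(1+|j|)^{\gamma}$ into the generalized Besov norms of $\theta^{1}$ and $\omega^{2}$. The paper organizes the transport piece slightly differently (it decomposes $\Delta_{j}(u^{2}\cdot\nabla\theta)$ into five pieces $K_{1},\dots,K_{5}$ and identifies the divergence-free cancellation as $K_{3}$ afterward, rather than pulling it out beforehand as you do), but this is cosmetic.

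Two points are worth flagging. First, the stated exponent $-1$ is a typo: the paper actually proves the estimate at level $B_{2,\infty}^{-\alpha}$ (every piece $J_{i}$, $K_{i}$ is shown to be $\lesssim 2^{j\alpha}\cdot(\text{stuff})$, and one then multiplies by $2^{-j\alpha}$). This is what is needed to close the Gronwall loop in the uniqueness proof, where $Y(t)=\|\theta\|_{B_{2,\infty}^{-\alpha}}+\|v\|_{B_{2,\infty}^{0}}$. Your argument, run at level $-1$, would give a weaker left-hand side that does not match the $\|\theta\|_{B_{2,\infty}^{-\alpha}}$ on the right; simply replace $2^{-j}$ by $2^{-j\alpha}$ throughout and your estimates go through unchanged.

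Second, for the commutator $[\Delta_{j},S_{k-1}u^{2}\cdot\nabla]\Delta_{k}\theta$ the paper uses the $\delta=1$ case of Lemma~\ref{commutator} (i.e.\ the classical first-order commutator), obtaining $\|\nabla S_{j-1}u^{2}\|_{L^{\infty}}\lesssim\|\omega^{2}\|_{B_{\infty,1}^{0,\gamma}}$ directly, including at the block $m=-1$ since $\nabla\nabla^{\perp}\Delta^{-1}$ is a bounded multiplier on $\Delta_{-1}$. Your choice $\delta=\alpha$ leads to $\|S_{k-1}u^{2}\|_{\dot{B}_{\infty,\infty}^{\alpha}}$, whose low-frequency part $\|\Delta_{-1}u^{2}\|_{L^{\infty}}$ is \emph{not} controlled by $\|\omega^{2}\|_{B_{\infty,1}^{0,\gamma}}$ alone; your remark that it is ``absorbed into $C$'' hides an extra a~priori bound on $\omega^{2}\in L^{2}\cap L^{p}$. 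This is harmless in the application but makes the lemma as stated slightly weaker than the paper's version; switching to $\delta=1$ removes the issue.
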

\begin{proof}
Let $j \geq -1$. Applying $\Delta_{j}$ to (\ref{difference theta}), taking the inner product of $\Delta_{j}\theta$ with
the resulting equation and applying H\"{o}lder's inequality, we obtain
\begin{align*}
\frac{d}{dt}\|\Delta_{j}\theta\|_{L^{2}} \leq \|\Delta_{j}(u\cdot \nabla \theta^{1})\|_{L^{2}} + \|\Delta_{j}(u^{2}\cdot \nabla \theta)\|_{L^{2}}.
\end{align*}
To estimate the first term, we write
\begin{align*}
\Delta_{j}(u\cdot \nabla \theta^{1}) = J_{1} + J_{2} +J_{3},
\end{align*}
where $J_{1}$, $J_{2}$ and $J_{3}$ are given by
\begin{align*}
& J_{1} = \sum_{|j-k|\leq 2}\Delta_{j}(S_{k-1}u \nabla\Delta_{k}\theta^{1}),  \\
& J_{2} = \sum_{|j-k|\leq 2}\Delta_{j}(\Delta_{k}u \nabla S_{k-1}\theta^{1}),   \\
& J_{3} = \sum_{k \geq j-1} \Delta_{j}(\Delta_{k}u \nabla \widetilde{\Delta}_{k}\theta^{1}).
\end{align*}
$J_{1}$, $J_{2}$ and $J_{3}$ can be estimated as follows.
\begin{align*}
\|J_{1}\|_{L^{2}} \leq & C 2^{j} \|S_{j-1}u\|_{L^{2}}\|\Delta_{j}\theta^{1}\|_{L^{\infty}}  \\
\leq & C 2^{\alpha j}2^{(1-\alpha)j} \|v\|_{L^{2}} (1+|j|)^{\gamma}\|\Delta_{j}\theta^{1}\|_{L^{\infty}}    \\
\leq & C 2^{j\alpha} \|v\|_{L^{2}}\|\theta^{2}\|_{B_{\infty,1}^{1-\alpha,\gamma}}.
\end{align*}
\begin{align*}
\|J_{2}\|_{L^{2}} \leq & C \|\Delta_{j}u\|_{L^{2}} \|S_{j-1}\nabla \theta\|_{L^{\infty}}  \\
\leq & C \|\Delta_{j}v\|_{L^{2}} (1+|j|)^{\gamma} \sum_{m\leq j-2}2^{m}\|\Delta_{m}\theta^{1}\|_{L^{\infty}}    \\
\leq & C 2^{j\alpha} \|v\|_{B_{2,\infty}^{0}} \sum_{m\leq j-2}\frac{2^{m\alpha}(1+|m|)^{-\gamma}}
{2^{j\alpha}(1+|j|)^{-\gamma}}2^{(1-\alpha)m}(1+|m|)^{\gamma}\|\Delta_{m}\theta^{1}\|_{L^{\infty}}  \\
\leq & C 2^{j\alpha} \|v\|_{B_{2,\infty}^{0}} \|\theta^{1}\|_{B_{\infty,1}^{1-\alpha,\gamma}}.
\end{align*}
\begin{align*}
\|J_{3}\|_{L^{2}} \leq &  C2^{j}\sum_{k \geq j-1} (1+|k|)^{\gamma} \|\Delta_{k}\theta^{1}\|_{L^{\infty}}
\|\Delta_{k}v\|_{L^{2}}    \\
\leq & C2^{j\alpha} 2^{j(1-\alpha)} \sum_{k\geq j-1} 2^{-k(1-\alpha)} (1+|k|)^{\gamma}\|\Delta_{k}\theta^{1}\|_{L^{\infty}}
2^{k(1-\alpha)}\|\Delta_{k}v\|_{L^{2}}  \\
\leq & C 2^{j\alpha}\sum_{k \geq j-1}2^{(j-k)(1-\alpha)}(1+|k|)^{\gamma}\|\Delta_{k}\theta^{1}\|_{L^{\infty}}
2^{k(1-\alpha)}\|\Delta_{k}v\|_{L^{2}}  \\
\leq & C 2^{j\alpha} \|v\|_{B_{2,\infty}^{0}} \|\theta^{1}\|_{B_{\infty,1}^{1-\alpha,\gamma}}.
\end{align*}
To estimate the second term, we write
\begin{align}\label{5.5}
\Delta_{j}(u^{2}\cdot \nabla \theta) = K_{1} + K_{2} + K_{3} + K_{4} + K_{5},
\end{align}
where
\begin{align*}
& K_{1} = \sum_{|j-k|\leq 2}[\Delta_{j},S_{k-1}u^{2}\cdot \nabla]\Delta_{k}\theta,  \\
& K_{2} = \sum_{|j-k|\leq 2}(S_{k-1}u^{2}-S_{j}u^{2})\cdot \nabla\Delta_{j}\Delta_{k}\theta,    \\
& K_{3} = S_{j}u^{2}\cdot \nabla\Delta_{j}\theta,   \\
& K_{4} = \sum_{|j-k|\leq 2}\Delta_{j}(\Delta_{k}u^{2}\cdot \nabla S_{k-1}\theta),  \\
& K_{5} = \sum_{k \geq j-1} \Delta_{j}(\Delta_{k}u^{2}\cdot \nabla \widetilde{\Delta}_{k}\theta).
\end{align*}
Since $\nabla \cdot u^{2} = 0$, we know that
\begin{align*}
\int \Delta_{j}\theta K_{3}\, dx =0
\end{align*}
By a standard commutator estimate, we obtain
\begin{align*}
\|K_{1}\|_{L^{2}} \leq & C \|x \Phi_{j}(x)\|_{L^{1}}\|\nabla S_{j-1} u^{2}\|_{L^{\infty}} \|\nabla \Delta_{j}\theta\|_{L^{2}}   \\
\leq & C \|x\Phi_{j}(x)\|_{L^{1}}\|\omega^{2}\|_{B_{\infty,1}^{0,\gamma}} \|\Delta_{j}\theta\|_{L^{2}}.
\end{align*}
where $\Phi_{j}(x)$ is the kernel of the operator $\Delta_{j}$.
For $j \geq j_{0}$ with $j_{0} = 2$, we apply Bernstein's inequality to obtain
\begin{align*}
\|K_{2}\|_{L^{2}} \leq & C \|\Delta_{j}u^{2}\|_{L^{\infty}} \|\nabla \Delta_{j}\theta\|_{L^{2}} \\
\leq & C \|\Delta_{j}\nabla u^{2}\|_{L^{\infty}} \|\Delta_{j}\theta\|_{L^{2}}   \\
\leq & C \|\omega^{2}\|_{B_{\infty,1}^{0,\gamma}} \|\Delta_{j}\theta\|_{L^{2}}.
\end{align*}
Again, for $j \geq j_{0}$ with $j_{0} = 2$, we have
\begin{align*}
\|K_{4}\|_{L^{2}} \leq & C \|\Delta_{j}u^{2}\|_{L^{\infty}} \|S_{j-1}\nabla \theta\|_{L^{2}}    \\
\leq & C 2^{j\alpha} \|\Delta_{j}\nabla u^{2}\|_{L^{\infty}} \sum_{m\leq j-2} 2^{(1+\alpha)(m-j)}2^{-m\alpha} \|\Delta_{m}\theta\|_{L^{2}}    \\
\leq & C 2^{j\alpha} \|\omega^{2}\|_{B_{\infty,1}^{0,\gamma}} \|\theta\|_{B_{2,\infty}^{-\alpha}}.
\end{align*}
\begin{align*}
\|K_{5}\|_{L^{2}} \leq & C 2^{j} \sum_{k\geq j-1} \|\Delta_{k}u^{2}\|_{L^{\infty}} \|\Delta_{k}\theta\|_{L^{2}} \\
\leq & C 2^{j\alpha} \sum_{k \geq j-1} 2^{-k} 2^{k(1-\alpha)} \|\Delta_{k}\nabla u^{2}\|_{L^{\infty}} \|\Delta_{k}\theta\|_{L^{2}}  \\
\leq & C 2^{j\alpha} \sum_{k \geq j-1} \|\Delta_{k}\nabla u^{2}\|_{L^{\infty}} 2^{-k\alpha} \|\Delta_{k}\theta\|_{L^{2}}    \\
\leq & 2^{j\alpha} \|\omega^{2}\|_{B_{\infty,1}^{0,\gamma}} \|\theta\|_{B_{2,\infty}^{-\alpha}}.
\end{align*}
From all the above estimates, we obtain
\begin{align*}
\frac{d}{dt}\|\Delta_{j}\theta\|_{L^{2}} \leq C 2^{j\alpha} \|v\|_{L^{2}}\|\theta^{1}\|_{B_{\infty,1}^{1-\alpha,\gamma}}
+ C \|\omega^{2}\|_{B_{\infty,1}^{0,\gamma}} \|\Delta_{j}\theta\|_{L^{2}} + C 2^{j\alpha}\|\omega^{2}\|_{B_{\infty,1}^{0,\gamma}}
\|\theta\|_{B_{2,\infty}^{-\alpha}}.
\end{align*}
Integrating in time leads to
\begin{align*}
\|\Delta_{j}\theta(t)\|_{L^{2}} \leq & \|\Delta_{j}\theta(0)\|_{L^{2}} + C2^{j\alpha} \int_{0}^{t}\|v(s)\|_{L^{2}}
\|\theta^{1}(s)\|_{B_{\infty,1}^{1-\alpha,\gamma}}\, ds \\
& + C 2^{j\alpha}\int_{0}^{t}\|\omega^{2}(s)\|_{B_{\infty,1}^{0,\gamma}}\|\theta(s)\|_{B_{2,\infty}^{-\alpha}}\, ds
\end{align*}
Hence, we finally get
\begin{align*}
\|\theta(t)\|_{B_{2,\infty}^{-\alpha}} \leq & \|\theta(0)\|_{B_{2,\infty}^{-\alpha}} + C\int_{0}^{t}\|v(s)\|_{L^{2}}
\|\theta^{1}(s)\|_{B_{\infty,1}^{1-\alpha,\gamma}}\, ds \\
& + C \int_{0}^{t}\|\omega^{2}(s)\|_{B_{\infty,1}^{0,\gamma}}\|\theta(s)\|_{B_{2,\infty}^{-\alpha}} \, ds.
\end{align*}
\end{proof}

\begin{lemma}
Assume that $v$ satisfies
\begin{align}\label{vvv}
\partial_{t} v + u^{2}\cdot \nabla v + u\cdot \nabla v^{1} - \sum_{j=1}^{2} \left( u_{j}^{2}\nabla v_{j}
+u_{j}\nabla v_{j}^{1} \right) + \Lambda^{\alpha}v = -\nabla p + \theta e_{2},
\end{align}
for $\alpha \in (0,1]$. Then
\begin{align*}
\|v(t)\|_{B_{2,\infty}^{0}} \leq & \|v(0)\|_{B_{2,\infty}^{0}} + \sup_{0\leq s\leq t} \|\theta(s)\|_{B_{2,\infty}^{-\alpha}} \nonumber   \\
& + C\int_{0}^{t} \|v(s)\|_{L^{2}} \left( \|\omega^{1}(s)\|_{B_{\infty,1}^{0,\gamma}} + \|\omega^{2}(s)\|_{B_{\infty,1}^{0,\gamma}} \right)\, ds.
\end{align*}
\end{lemma}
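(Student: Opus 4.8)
The plan is a frequency--localized energy estimate on (\ref{vvv}) carried out block by block, in the same spirit as the treatment of $\Delta_{j}(u^{2}\cdot\nabla\theta)$ in the previous lemma, the new feature being that the dissipation $\Lambda^{\alpha}v$ is now retained and its smoothing exploited. First I would apply $\Delta_{j}$ to (\ref{vvv}) and take the $L^{2}$ inner product with $\Delta_{j}v$. Since $v=v^{2}-v^{1}$ is divergence free (recall that in the velocity formulation $\nabla\cdot v=0$, and with $\sigma=0$ one has $v^{i}=\nabla^{\bot}\Delta^{-1}\omega^{i}$, $u^{i}=(\log(\text{Id}-\Delta))^{\gamma}v^{i}$), the pressure term disappears, and the transport term $u^{2}\cdot\nabla v$ is handled exactly as $K_{1},\dots,K_{5}$ in Lemma \ref{finallemma1}: the piece $S_{j}u^{2}\cdot\nabla\Delta_{j}v$ integrates to zero by $\nabla\cdot u^{2}=0$, and the remaining commutator and low--high interactions are bounded by $\|\omega^{2}\|_{B_{\infty,1}^{0,\gamma}}$ times (local sums of) $\|\Delta_{j}v\|_{L^{2}}$. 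Using the positivity estimate $\int\Lambda^{\alpha}\Delta_{j}v\cdot\Delta_{j}v\,dx\gtrsim 2^{j\alpha}\|\Delta_{j}v\|_{L^{2}}^{2}$ for $j\geq 0$ (Lemma \ref{bernstein}) and dividing by $\|\Delta_{j}v\|_{L^{2}}$ one reaches, for each $j\geq -1$,
\begin{align*}
\frac{d}{dt}\|\Delta_{j}v\|_{L^{2}}+c\,2^{j\alpha}\|\Delta_{j}v\|_{L^{2}}\ \lesssim\ \big\|\Delta_{j}(u\cdot\nabla v^{1})\big\|_{L^{2}}+\Big\|\Delta_{j}\!\sum_{i=1}^{2}\big(u_{i}^{2}\nabla v_{i}+u_{i}\nabla v_{i}^{1}\big)\Big\|_{L^{2}}+R_{j}(t)+\|\Delta_{j}\theta\|_{L^{2}},
\end{align*}
where $R_{j}$ collects the $u^{2}\cdot\nabla v$ remainders and, by the above, $|R_{j}|\lesssim c_{j}\|v\|_{L^{2}}\|\omega^{2}\|_{B_{\infty,1}^{0,\gamma}}$ with $\sup_{j}c_{j}\lesssim 1$.

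Next I would bound the two genuinely new blocks by Bony's decomposition, keeping track of the generalized Besov weights. The identities $v^{i}=\nabla^{\bot}\Delta^{-1}\omega^{i}$ give $\|\Delta_{m}\nabla v^{i}\|_{L^{\infty}}\approx\|\Delta_{m}\omega^{i}\|_{L^{\infty}}$, while $\|\Delta_{m}u^{i}\|_{L^{2}}\approx(1+|m|)^{\gamma}\|\Delta_{m}v^{i}\|_{L^{2}}$; each paraproduct and remainder piece of $u\cdot\nabla v^{1}$ and of $\sum_{i}(u_{i}^{2}\nabla v_{i}+u_{i}\nabla v_{i}^{1})$ therefore pairs one factor of $v$, measured in $L^{2}$ and carrying at worst a weight $(1+|j|)^{\gamma}$, against a factor of $\omega^{1}$ or $\omega^{2}$ summable to $\|\omega^{i}\|_{B_{\infty,1}^{0,\gamma}}$ (here the $B^{0,\gamma}$ scale is exactly what absorbs the logarithmic loss carried by $u^{i}=(\log(\text{Id}-\Delta))^{\gamma}v^{i}$). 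Since $\alpha>0$, the weight $(1+|j|)^{\gamma}$ is dominated by the dissipation rate $2^{j\alpha}$ up to a fixed constant, so all these contributions can be written as $c_{j}\,2^{j\alpha}\|\Delta_{j}v\|_{L^{2}}(\|\omega^{1}\|_{B_{\infty,1}^{0,\gamma}}+\|\omega^{2}\|_{B_{\infty,1}^{0,\gamma}})$ with $\sup_{j}c_{j}\lesssim 1$, and the $\theta$-source obeys $\|\Delta_{j}\theta\|_{L^{2}}\leq 2^{j\alpha}\|\theta\|_{B_{2,\infty}^{-\alpha}}$.

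Then I would integrate in time by Duhamel. For $j\geq 0$ the integral $\int_{0}^{t}2^{j\alpha}e^{-c2^{j\alpha}(t-s)}\,ds\leq c^{-1}$ is bounded uniformly in $j$, which converts the $2^{j\alpha}$-weighted right-hand side terms into $\sup_{0\le s\le t}\|\theta(s)\|_{B_{2,\infty}^{-\alpha}}$ and into $C\int_{0}^{t}\|v(s)\|_{L^{2}}(\|\omega^{1}(s)\|_{B_{\infty,1}^{0,\gamma}}+\|\omega^{2}(s)\|_{B_{\infty,1}^{0,\gamma}})\,ds$; the block $j=-1$ is integrated directly and contributes terms of the same two types. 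With $\|\Delta_{j}v(0)\|_{L^{2}}\leq\|v(0)\|_{B_{2,\infty}^{0}}$, taking the supremum over $j\geq -1$ yields the claimed inequality. The step I expect to be the main obstacle is the treatment of the quadratic ``rotational'' terms $\sum_{i}(u_{i}^{2}\nabla v_{i}+u_{i}\nabla v_{i}^{1})$ together with the weight bookkeeping: these terms come from the $u^{\bot}(\nabla^{\bot}\cdot v)$ nonlinearity of the velocity formulation and have no analogue in the scalar transport estimate, so one must verify, block by block after Bony's splitting, that each retains exactly one factor of $v$ (in $L^{2}$), that the derivative falling on $v$ is compensated by the other factor being $\nabla v^{i}\approx\omega^{i}$, and that every logarithmic loss from the operator $(\log(\text{Id}-\Delta))^{\gamma}$ is either cancelled by the $B_{\infty,1}^{0,\gamma}$ norm on $\omega^{i}$ or absorbed by the $2^{j\alpha}$ gain from the dissipation.
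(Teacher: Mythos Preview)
Your overall strategy matches the paper's: localize in frequency, take the $L^{2}$ inner product with $\Delta_{j}v$, keep the dissipation as $c\,2^{j\alpha}\|\Delta_{j}v\|_{L^{2}}^{2}$, treat $u^{2}\cdot\nabla v$ exactly as $K_{1},\dots,K_{5}$ in the previous lemma, bound $\|\Delta_{j}\theta\|_{L^{2}}\le 2^{j\alpha}\|\theta\|_{B_{2,\infty}^{-\alpha}}$, apply Duhamel, and take the supremum over $j$.

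There is, however, a concrete bookkeeping error in your treatment of the ``new'' quadratic terms. You write that the logarithmic weight $(1+|j|)^{\gamma}$ coming from $u=(\log(\text{Id}-\Delta))^{\gamma}v$ must be ``dominated by the dissipation rate $2^{j\alpha}$'', and that the resulting bounds carry a factor $2^{j\alpha}$. This is not how the weight is disposed of, and if you followed it literally the Duhamel step would produce a supremum in time on the $\omega$-terms rather than the integral $\int_{0}^{t}\|v\|_{L^{2}}(\|\omega^{1}\|+\|\omega^{2}\|)\,ds$ that the lemma claims. The correct mechanism is that the weight pairs with the vorticity factor and is absorbed \emph{directly} into the norm $\|\omega^{i}\|_{B_{\infty,1}^{0,\gamma}}$: for instance, for the $J_{1}$-type piece of $\Delta_{j}(u\cdot\nabla v^{1})$ one has
\[
\|S_{j-1}u\|_{L^{2}}\,\|\Delta_{j}\nabla v^{1}\|_{L^{\infty}}
\lesssim (1+|j|)^{\gamma}\|v\|_{L^{2}}\,\|\Delta_{j}\omega^{1}\|_{L^{\infty}}
\le \|v\|_{L^{2}}\,\|\omega^{1}\|_{B_{\infty,1}^{0,\gamma}},
\]
with \emph{no} $2^{j\alpha}$ factor. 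All of $L_{1},\dots,L_{4}$ in the paper's notation are bounded by $C\|\Delta_{j}v\|_{L^{2}}\|v\|_{L^{2}}\|\omega^{i}\|_{B_{\infty,1}^{0,\gamma}}$; after dividing by $\|\Delta_{j}v\|_{L^{2}}$ and integrating (simply bounding $e^{-c2^{j\alpha}(t-s)}\le 1$) one obtains the time-integral term. The $2^{j\alpha}$-smoothing is used \emph{only} on the $\theta$-source, which is why that term alone ends up as a supremum.

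One further point you gloss over: for the rotational term $\sum_{n}u_{n}^{2}\nabla v_{n}$ the derivative lands on the difference $v$, and a direct paraproduct estimate would cost an extra derivative. The paper writes $\sum_{n}u_{n}^{2}\nabla v_{n}=\nabla(u^{2}\cdot v)-\sum_{n}v_{n}\nabla u_{n}^{2}$ and observes that the gradient part vanishes after pairing with $\Delta_{j}v$ because $\nabla\cdot v=0$; this reduces the term to $\sum_{n}v_{n}\nabla u_{n}^{2}$, which is then handled exactly like $u\cdot\nabla v^{1}$. You invoke $\nabla\cdot v=0$ only to kill the pressure, but it is needed a second time here.
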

\begin{proof}
Let $j \geq -1$. After applying $\Delta_{j}$ to equation (\ref{vvv}), taking the inner product with $\Delta_{j}v$
and integrating by parts, we find
\begin{align*}
\frac{1}{2}\frac{d}{dt}\|\Delta_{j}v\|_{L^{2}}^{2} + c2^{j\alpha}\|\Delta_{j}v\|_{L^{2}}^{2}
=L_{1}+L_{2}+L_{3}+L_{4}+L_{5},
\end{align*}
where
\begin{align*}
& L_{1}=-\int \Delta_{j}v\cdot \Delta_{j}(u^{2}\cdot \nabla v)\, dx   \\
& L_{2}=-\int \Delta_{j}v\cdot \Delta_{j}(u\cdot \nabla v^{1})\, dx   \\
& L_{3}=-\sum_{n=1}^{2} \int \Delta_{j}v\cdot \Delta_{j}(u_{n}^{2}\nabla v_{n})\, dx \\
& L_{4}=-\sum_{n=1}^{2} \int \Delta_{j}v\cdot \Delta_{j}(u_{n}\nabla v_{n}^{1})\, dx \\
& L_{5}=-\int \Delta_{j}v_{2}\cdot \Delta_{j}\theta.
\end{align*}
To estimate $L_{1}$, we decompose $\Delta_{j}(u^{2}\cdot \nabla v)$ as in (\ref{5.5}) and bound the
components in a similar fashion as the above Lemma. We obtain after applying H\"{o}lder's inequality
\begin{align*}
|L_{1}|\leq C \|\Delta_{j}v\|_{L^{2}}\|v\|_{L^{2}}\|\omega^{2}\|_{B_{\infty,1}^{0,\gamma}}.
\end{align*}
To deal with $L_{2}$, similar to the proof in the above Lemma we obtain
\begin{align*}
|L_{2}|\leq C\|\Delta_{j}v\|_{L^{2}}\|v\|_{L^{2}}\|\omega^{1}\|_{B_{\infty,1}^{0,\gamma}}.
\end{align*}
To handle $L_{3}$, we integrate by part and use the divergence-free condition to obtain
\begin{align*}
L_{3}=\sum_{n=1}^{2}\int \Delta_{j}v \cdot \Delta_{j} (v_{n}\nabla u_{n}^{2}).
\end{align*}
Then using the same idea as in Lemma \ref{finallemma1} we have
\begin{align*}
|L_{3}|\leq C \|\Delta_{j}v\|_{L^{2}}\|v\|_{L^{2}}\|\omega^{2}\|_{B_{\infty,1}^{0,\gamma}}.
\end{align*}
We can easily notice that $L_{4}$ admits the same bound as $L_{2}$.
$L_{5}$ can be bounded by applying H\"{o}lder's inequality
\begin{align*}
|L_{5}|\leq \|\Delta_{j}v\|_{L^{2}}\|\Delta_{j}\theta\|_{L^{2}} \leq 2^{j\alpha}\|\Delta_{j}v\|_{L^{2}}
\|\theta\|_{B_{2,\infty}^{-\alpha}}.
\end{align*}
From all the above statements, we find
\begin{align*}
& \frac{d}{dt}\|\Delta_{j}v\|_{L^{2}} + 2^{j\alpha}\|\Delta_{j}v\|_{L^{2}}    \\
\leq & C \|v\|_{L^{2}}\left( \|\omega^{1}\|_{B_{\infty,1}^{0,\gamma}}+\|\omega^{2}\|_{B_{\infty,1}^{0,\gamma}} \right)+2^{j\alpha}\|\theta\|_{B_{2,\infty}^{-\alpha}}.
\end{align*}
Integrating in time yields
\begin{align*}
\|\Delta_{j}v(t)\|_{L^{2}} \leq & e^{-2^{j\alpha}t} \|\Delta_{j}v(0)\|_{L^{2}}
+ \int_{0}^{t} e^{-2^{j\alpha}(t-s)} 2^{j\alpha} \|\theta(s)\|_{B_{2,\infty}^{-\alpha}} \, ds   \\
& + C \int_{0}^{t} e^{-2^{j\alpha}(t-s)} \|v(s)\|_{L^{2}}
\left( \|\omega^{1}(s)\|_{B_{\infty,1}^{0,\gamma}} + \|\omega^{2}(s)\|_{B_{\infty,1}^{0,\gamma}} \right) \, ds.
\end{align*}
Therefore,
\begin{align*}
\|v(t)\|_{B_{2,\infty}^{0}} \leq & \|v(0)\|_{B_{2,\infty}^{0}} + \sup_{0\leq s\leq t}\|\theta(s)\|_{B_{2,\infty}^{-\alpha}}    \\
& + C \int_{0}^{t} \|v(s)\|_{L^{2}} \left( \|\omega^{1}(s)\|_{B_{\infty,1}^{0,\gamma}}
+\|\omega^{2}(s)\|_{B_{\infty,1}^{0,\gamma}} \right) \, ds.
\end{align*}
This completes the proof.
\end{proof}

At the end, we give the main theorem of this section.
\begin{theorem}
Assume that $(\omega_{0},\theta_{0})$ satisfies the conditions stated in Theorem \ref{global result}.
Let $\sigma = 0$, $\gamma \geq 0$ and $q > 2$. Let $(\omega^{1},\theta^{1})$ and $(\omega^{2}, \theta^{2})$ be
two solutions of (\ref{GB}) satisfying for any $t > 0$,
\begin{align*}
\omega^{1},\omega^{2} \in L_{t}^{1}L^{2}\cap L_{t}^{1}B_{\infty,1}^{0,\gamma},\quad \theta^{1},\theta^{2}\in L_{t}^{1}L^{2}\cap L_{t}^{1}B_{\infty,1}^{0,\gamma}.
\end{align*}
Then they must coincide.
\end{theorem}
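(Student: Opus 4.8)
The plan is to run a Yudovich-type argument on the difference of the two solutions and to close the resulting estimates with Osgood's lemma (Lemma \ref{osgood}). With the notation already introduced, $\omega=\omega^{2}-\omega^{1}$, $\theta=\theta^{2}-\theta^{1}$, $u=u^{2}-u^{1}$, $v=v^{2}-v^{1}$, $p=p^{2}-p^{1}$, the pair $(v,\theta)$ solves the coupled system \eqref{difference theta}--\eqref{vvv}, and since the two solutions share the same data we have $v(0)=0$ and $\theta(0)=0$.

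First I would insert $(v,\theta)$ into the two lemmas of this section. Writing $S(t):=\sup_{0\le s\le t}\|\theta(s)\|_{B_{2,\infty}^{-\alpha}}$ and $Y(t):=\|v(t)\|_{B_{2,\infty}^{0}}+S(t)$, and using $v(0)=\theta(0)=0$, Lemma \ref{finallemma1} and the second lemma give, after taking suprema in time,
\begin{align*}
S(t) &\le C\int_{0}^{t}\|v(s)\|_{L^{2}}\,\|\theta^{1}(s)\|_{B_{\infty,1}^{1-\alpha,\gamma}}\,ds + C\int_{0}^{t}\|\omega^{2}(s)\|_{B_{\infty,1}^{0,\gamma}}\,S(s)\,ds, \\
\|v(t)\|_{B_{2,\infty}^{0}} &\le S(t) + C\int_{0}^{t}\|v(s)\|_{L^{2}}\big(\|\omega^{1}(s)\|_{B_{\infty,1}^{0,\gamma}}+\|\omega^{2}(s)\|_{B_{\infty,1}^{0,\gamma}}\big)\,ds,
\end{align*}
whence, adding and using $S\le Y$,
\begin{align*}
Y(t)\le C\int_{0}^{t}\big(a_{1}(s)\,\|v(s)\|_{L^{2}}+a_{2}(s)\,Y(s)\big)\,ds,
\end{align*}
with $a_{1}:=\|\theta^{1}\|_{B_{\infty,1}^{1-\alpha,\gamma}}+\|\omega^{1}\|_{B_{\infty,1}^{0,\gamma}}+\|\omega^{2}\|_{B_{\infty,1}^{0,\gamma}}$ and $a_{2}:=\|\omega^{2}\|_{B_{\infty,1}^{0,\gamma}}$. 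Both belong to $L^{1}_{\mathrm{loc}}(\mathbb{R}^{+})$ in view of the hypotheses on $\omega^{1},\omega^{2}$ and of the extra regularity $\theta^{j}\in L^{1}_{\mathrm{loc}}B_{\infty,1}^{1-\alpha,\gamma}$ enjoyed by the solutions, which for the solutions produced in the previous section follows from $\theta^{j}\in L^{\infty}_{t}B_{\infty,1}^{1-\alpha+\epsilon}$ via the embedding $B_{\infty,1}^{1-\alpha+\epsilon}\hookrightarrow B_{\infty,1}^{1-\alpha,\gamma}$.

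The only obstruction is that the right-hand side carries $\|v(s)\|_{L^{2}}$ while the left-hand side controls only the weaker norm $\|v(s)\|_{B_{2,\infty}^{0}}$ --- the classical logarithmic gap of borderline problems. I would bridge it with the interpolation inequality
\begin{align*}
\|v\|_{L^{2}}\le C\,\|v\|_{B_{2,\infty}^{0}}\log\!\Big(e+\frac{\|v\|_{B_{2,\infty}^{\eta}}}{\|v\|_{B_{2,\infty}^{0}}}\Big),\qquad \eta\in(0,1),
\end{align*}
together with a uniform bound $\|v(t)\|_{B_{2,\infty}^{\eta}}\le M$ on any $[0,T]$ coming from the a priori estimates on $\omega^{1},\omega^{2}$ (recall $v^{j}=\nabla^{\bot}\Delta^{-1}\omega^{j}$, which gains a degree of smoothness over $\omega^{j}$). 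Since $r\mapsto r\log(e+M/r)$ is nondecreasing and $\|v\|_{B_{2,\infty}^{0}}\le Y$, this produces an Osgood-type inequality
\begin{align*}
Y(t)\le C\int_{0}^{t} a(s)\,Y(s)\,\log\!\Big(e+\frac{C_{0}}{Y(s)}\Big)\,ds,\qquad a:=a_{1}+a_{2}\in L^{1}_{\mathrm{loc}}(\mathbb{R}^{+}).
\end{align*}
Because $\int_{0}^{1}\frac{dr}{r\log(e+C_{0}/r)}=+\infty$, Lemma \ref{osgood} with $a=0$ forces $Y\equiv 0$ on $[0,T]$ for every $T$, i.e. $v^{1}\equiv v^{2}$ and $\theta^{1}\equiv\theta^{2}$; applying $(\log(\text{Id}-\Delta))^{\gamma}$ gives $u^{1}\equiv u^{2}$, and hence $\omega^{1}\equiv\omega^{2}$, which is the asserted uniqueness.

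I expect the delicate point to be exactly this third step: making the logarithmic interpolation rigorous in the whole-space setting and, above all, securing the uniform control of $\|v\|_{B_{2,\infty}^{\eta}}$ (the low frequencies of $\nabla^{\bot}\Delta^{-1}\omega^{j}$ need care in $\mathbb{R}^{2}$), together with checking that every term generated by the Bony decompositions inside the two lemmas is genuinely bounded by $\|v\|_{L^{2}}$ and by no stronger norm of $v$, so that the whole scheme really collapses to a single Osgood inequality.
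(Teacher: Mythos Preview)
Your proposal is correct and follows essentially the same route as the paper: combine the two lemmas of this section into a single inequality for $Y(t)=\|v(t)\|_{B_{2,\infty}^{0}}+\|\theta(t)\|_{B_{2,\infty}^{-\alpha}}$, bridge the $\|v\|_{L^{2}}$ versus $\|v\|_{B_{2,\infty}^{0}}$ gap with a logarithmic interpolation, and close with Osgood. The only difference is that the paper takes $\eta=1$ and uses the clean identity $\|v\|_{H^{1}}\le \|\omega^{1}\|_{L^{2}}+\|\omega^{2}\|_{L^{2}}$ (immediate from $v^{j}=\nabla^{\bot}\Delta^{-1}\omega^{j}$), which directly disposes of the low-frequency worry you flagged.
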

\begin{proof}
Using the notations stated in the beginning of this section, we know that
$v$, $\theta$, $u$ and $p$ satisfy (\ref{difference theta}) and (\ref{vvv}).
In our deduction, we will use the following two inequalities
\begin{align*}
\|v\|_{L^{2}} \leq C \|v\|_{B_{2,\infty}^{0}} \log \left( 1 + \frac{\|v\|_{H^{1}}}{\|v\|_{B_{2,\infty}^{0}}} \right),
\end{align*}
and
\begin{align*}
\|v\|_{H^{1}} \leq \|\omega^{1}\|_{L^{2}} + \|\omega^{2}\|_{L^{2}}.
\end{align*}
Combining the inequalities above and setting
\begin{align*}
Y(t) = \|\theta(t)\|_{B_{2,\infty}^{-\alpha}} + \|v(t)\|_{B_{2,\infty}^{0}},
\end{align*}
we obtain
\begin{align*}
Y(t) \leq 2Y(0) + C \int_{0}^{t} D_{1}(s)\left[ Y(s)\log \left( 1+\frac{D_{2}(s)}{Y(s)} \right) + Y(s) \right] \, ds,
\end{align*}
where
\begin{align*}
& D_{1}(s) = \|\theta^{1}(s)\|_{B_{\infty,1}^{1-\alpha,\gamma}}+\|\omega^{1}(s)\|_{B_{\infty,1}^{0,\gamma}}+\|\omega^{2}(s)\|_{B_{\infty,1}^{0,\gamma}}  \\
& D_{2}(s) = \|\omega^{2}(s)\|_{L^{2}}+\|\omega^{2}(s)\|_{L^{2}}.
\end{align*}
Using the same idea in the proof of the integrable of $\|\theta\|_{B_{\infty,1}^{0,\gamma}}$, we can prove the
that $\|\theta^{1}(s)\|_{B_{\infty,1}^{1-\alpha,\gamma}}$ is integrable.
Hence, we know that $D_{1}$ and $D_{2}$ are integrable.
By Osgood's inequality we get $Y(t)=0$.
This completes the proof.
\end{proof}


\section{Appendix: Technical Lemmas}

Here we give some useful estimates in Besov framework.
\begin{lemma}\label{te}
Let $u$ be a smooth divergence-free vector field of $\mathbb{R}^{d}$ and $f$ be a smooth scalar function. Then

(1) for every $\alpha \in (\sigma+\epsilon,1)$ and $p \in [2,\infty]$
\begin{align*}
& \sup_{q\geq -1} 2^{q(\alpha -1 -\epsilon -\sigma)}\|[\Delta_{q},u\cdot \nabla]f\|_{L^{p}} \\
\lesssim & \left( \|\Lambda^{1-\sigma-\epsilon}u\|_{B_{p,\infty}^{\alpha-1}} + \|u\|_{L^{2}} \right) \|f\|_{B_{\infty,\infty}^{0}}
\end{align*}

(2) for a special $u = \nabla^{\bot}\Delta^{-1}\Lambda^{\sigma}(\log(\text{Id}-\Delta))^{\gamma}\omega$
\begin{align*}
& \sup_{q\geq -1}2^{q(\alpha -1-\epsilon -\sigma)}\|[\Delta_{q},u\cdot \nabla]f\|_{L^{p}} \\
\lesssim & \left( \|G\|_{B_{p,\infty}^{\alpha -1}} + \|G\|_{L^{2}} + \|\theta_{0}\|_{L^{p}\cap L^{2}} \right) \|f\|_{B_{\infty,\infty}^{0}}.
\end{align*}
\end{lemma}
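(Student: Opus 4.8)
The plan is to prove part (1) by a Bony--paraproduct decomposition of the commutator $[\Delta_{q},u\cdot\nabla]f=\Delta_{q}(u\cdot\nabla f)-u\cdot\nabla\Delta_{q}f$, and then to deduce part (2) from part (1) by inserting the explicit velocity $u=\nabla^{\bot}\Delta^{-1}\Lambda^{\sigma}(\log(\mathrm{Id}-\Delta))^{\gamma}\omega$ with $\omega=G+\mathcal{R}_{\alpha}\theta$ and estimating $\|\Lambda^{1-\sigma-\epsilon}u\|_{B_{p,\infty}^{\alpha-1}}+\|u\|_{L^{2}}$ by the right-hand side of (2). For the decomposition, since $\operatorname{div}u=0$ I would write $u\cdot\nabla f=\operatorname{div}(fu)$ and split the product with Bony's formula. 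After the diagonal paraproduct contribution cancels against $u\cdot\nabla\Delta_{q}f$ (as in the $K_{1},\dots,K_{5}$ splitting of Section 5), $[\Delta_{q},u\cdot\nabla]f$ becomes the sum of a genuine paracommutator $\sum_{|k-q|\le 4}[\Delta_{q},S_{k-1}u\cdot\nabla]\Delta_{k}f$, a transport-type term $\sum_{|k-q|\le 4}\Delta_{q}(\Delta_{k}u\cdot\nabla S_{k-1}f)$, a high-frequency remainder in divergence form $\sum_{k\ge q-3}\Delta_{q}\operatorname{div}(\Delta_{k}u\,\widetilde{\Delta}_{k}f)$, plus the standard lower-order pieces $(S_{k-1}u-S_{q}u)\cdot\nabla\Delta_{q}\Delta_{k}f$ ($|k-q|\le 1$) and $\sum_{k\ge q-1}\Delta_{k}u\cdot\nabla\Delta_{q}f$, together with the terms carrying $\Delta_{-1}u$. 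The only role of the divergence rewriting is that $f$ is never differentiated in the remainder, which is what makes the sum over $k\ge q$ converge.

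For the estimates in (1), set $\delta:=\alpha-\sigma-\epsilon$, which lies in $(0,1)$ exactly because $\alpha\in(\sigma+\epsilon,1)$. For the paracommutator I apply Lemma \ref{commutator}(1) with $\phi$ the kernel of $\Delta_{q}$ (so $\||x|^{\delta}\phi\|_{L^{1}}\simeq 2^{-q\delta}$), the $L^{p}$-factor being $S_{k-1}u$ and the $L^{\infty}$-factor $\nabla\Delta_{k}f$; since $\|S_{k-1}u\|_{\dot B_{p,\infty}^{\delta}}\lesssim\|u\|_{L^{2}}+\|u\|_{B_{p,\infty}^{\delta}}$ (Bernstein on the low blocks, using $p\ge 2$) and $\|\nabla\Delta_{k}f\|_{L^{\infty}}\lesssim 2^{k}\|f\|_{B_{\infty,\infty}^{0}}$, the $|k-q|\le 4$ piece is $\lesssim 2^{q(1-\delta)}(\|u\|_{L^{2}}+\|u\|_{B_{p,\infty}^{\delta}})\|f\|_{B_{\infty,\infty}^{0}}$. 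The transport term is controlled by $\|\Delta_{q}(\Delta_{k}u\cdot\nabla S_{k-1}f)\|_{L^{p}}\lesssim\|\Delta_{k}u\|_{L^{p}}\|\nabla S_{k-1}f\|_{L^{\infty}}$ with $\|\nabla S_{k-1}f\|_{L^{\infty}}\lesssim 2^{k}\|f\|_{B_{\infty,\infty}^{0}}$; the divergence remainder by gaining $2^{q}$ from $\Delta_{q}\operatorname{div}$, using $\|\Delta_{k}u\|_{L^{p}}\lesssim 2^{-k\delta}\|u\|_{B_{p,\infty}^{\delta}}$, and summing the geometric series $\sum_{k\ge q}2^{q-k\delta}\simeq 2^{q(1-\delta)}$ (convergent since $\delta>0$); the lower-order pieces identically. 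Every contribution is thus $\lesssim 2^{q(1-\delta)}(\|u\|_{L^{2}}+\|u\|_{B_{p,\infty}^{\delta}})\|f\|_{B_{\infty,\infty}^{0}}$, and the $\Delta_{-1}u$ terms are $\lesssim\|u\|_{L^{2}}\|f\|_{B_{\infty,\infty}^{0}}$ by Bernstein. Multiplying by the weight $2^{q(\alpha-1-\epsilon-\sigma)}=2^{-q(1-\delta)}$ and noting that for $k\ge 0$ one has $2^{k\delta}\|\Delta_{k}u\|_{L^{p}}\simeq 2^{k(\alpha-1)}\|\Delta_{k}\Lambda^{1-\sigma-\epsilon}u\|_{L^{p}}$ (so $\|u\|_{B_{p,\infty}^{\delta}}$ and $\|\Lambda^{1-\sigma-\epsilon}u\|_{B_{p,\infty}^{\alpha-1}}$ agree up to the low block, absorbed in $\|u\|_{L^{2}}$) yields (1).

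For the passage from (1) to (2), substitute $u=\nabla^{\bot}\Delta^{-1}\Lambda^{\sigma}(\log(\mathrm{Id}-\Delta))^{\gamma}(G+\mathcal{R}_{\alpha}\theta)$. The multiplier of $\Lambda^{1-\sigma-\epsilon}\nabla^{\bot}\Delta^{-1}\Lambda^{\sigma}$ has order $-\epsilon$, hence at frequency $2^{k}$ is bounded by $2^{-k\epsilon}(1+k)^{\gamma}\lesssim 1$; therefore $\|\Lambda^{1-\sigma-\epsilon}u\|_{B_{p,\infty}^{\alpha-1}}\lesssim\|\omega\|_{B_{p,\infty}^{\alpha-1}}\le\|G\|_{B_{p,\infty}^{\alpha-1}}+\|\mathcal{R}_{\alpha}\theta\|_{B_{p,\infty}^{\alpha-1}}$, and, since $\mathcal{R}_{\alpha}=\Lambda^{1-\alpha}\mathcal{R}$, Lemma \ref{Lp} gives $\|\mathcal{R}_{\alpha}\theta\|_{B_{p,\infty}^{\alpha-1}}\lesssim\|\theta\|_{L^{p}}\le\|\theta_{0}\|_{L^{p}}$. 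For $\|u\|_{L^{2}}$ I split into high and low frequencies: the high part is $\lesssim\|\omega\|_{L^{2}}\le\|G\|_{L^{2}}+\|\mathcal{R}_{\alpha}\theta\|_{L^{2}}$, and the low part is handled exactly as in Step 1 of the proof of Theorem \ref{88}, namely by combining Bernstein's inequality ($p\ge 2$) with the Hardy--Littlewood--Sobolev inequality to absorb the singular symbol $|\xi|^{\sigma-1}$ of $\nabla^{\bot}\Delta^{-1}\Lambda^{\sigma}$ near the origin; invoking once more $\|\theta(t)\|_{L^{p}}\le\|\theta_{0}\|_{L^{p}}$ (and $\alpha<1$ for the $\mathcal{R}_{\alpha}\theta$ piece) leaves $\lesssim\|G\|_{L^{2}}+\|\theta_{0}\|_{L^{p}\cap L^{2}}$. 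Feeding all this into (1) proves (2).

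I expect the main obstacle to be precisely this low-frequency estimate of $u$ in the last step: the Biot--Savart operator $\nabla^{\bot}\Delta^{-1}\Lambda^{\sigma}(\log(\mathrm{Id}-\Delta))^{\gamma}$ is \emph{not} bounded on $L^{2}$ near $\xi=0$ when $\sigma<2$, so one cannot simply replace $\|u\|_{L^{2}}$ by $\|\omega\|_{L^{2}}$; one is forced to use the splitting $\omega=G+\mathcal{R}_{\alpha}\theta$ together with the derivative gained from $\Delta^{-1}$, converting the low modes into $L^{p}$ information on $\theta$ via Bernstein and Hardy--Littlewood--Sobolev. By contrast, the high-frequency paraproduct analysis in part (1) is routine, the only care being the bookkeeping that keeps all sums over $k\ge q$ geometric, which is exactly why $\delta=\alpha-\sigma-\epsilon$ must be positive, i.e.\ why the hypothesis $\alpha\in(\sigma+\epsilon,1)$ is imposed.
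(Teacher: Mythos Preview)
Your argument for part (1) follows the same Bony paraproduct strategy as the paper, and the estimates are essentially equivalent; the only cosmetic difference is that the paper applies Lemma~\ref{commutator} with exponent $1-\sigma-\epsilon$ rather than your $\delta=\alpha-\sigma-\epsilon$, which produces $\|\Lambda^{1-\sigma-\epsilon}u\|_{B_{p,\infty}^{\alpha-1}}$ directly instead of $\|u\|_{B_{p,\infty}^{\delta}}$.

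For part (2), however, your route differs from the paper's and contains a genuine gap. The paper does \emph{not} deduce (2) from (1); it re-runs the Bony analysis with the specific $u$, inserting $\omega=G+\mathcal{R}_{\alpha}\theta$ block by block. The reason is precisely the low-frequency obstruction you flag: the term $\|u\|_{L^{2}}$ in (1) comes entirely from the $\Delta_{-1}u$ contribution in $III_{q}^{2}$, and for $u=\nabla^{\bot}\Delta^{-1}\Lambda^{\sigma}(\log(\mathrm{Id}-\Delta))^{\gamma}\omega$ with $\sigma<1$ the symbol $|\xi|^{\sigma-1}$ is singular at the origin, so $\|\Delta_{-1}u\|_{L^{2}}$ need not even be finite. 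Your proposed Hardy--Littlewood--Sobolev cure, borrowed from Step~1 of Theorem~\ref{88}, controls $\|\Delta_{-1}u\|_{L^{\infty}}$ by $\|\omega\|_{L^{2}}$, not $\|\Delta_{-1}u\|_{L^{2}}$, so it does not close the estimate. Separately, your high-frequency step ``$\|(\mathrm{Id}-\Delta_{-1})u\|_{L^{2}}\lesssim\|\omega\|_{L^{2}}\le\|G\|_{L^{2}}+\|\mathcal{R}_{\alpha}\theta\|_{L^{2}}$'' is formally correct but useless: $\|\mathcal{R}_{\alpha}\theta\|_{L^{2}}=\|\theta\|_{\dot H^{1-\alpha}}$ is not controlled by $\|\theta_{0}\|_{L^{p}\cap L^{2}}$, and the parenthetical ``$\alpha<1$'' does not help since $\mathcal{R}_{\alpha}$ has \emph{positive} order $1-\alpha$.

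The fix is to keep the $2^{j(\sigma-1)}$ weight from the Biot--Savart multiplier together with the $2^{j(1-\alpha)}$ from $\mathcal{R}_{\alpha}$ \emph{block by block}, giving $2^{j(\sigma-\alpha)}$, which is summable since $\sigma<\alpha$. This is exactly what the paper achieves by repeating the paraproduct computation with the explicit $u$: once the dyadic weights are retained, only $\|\Delta_{-1}u\|_{L^{p}}$ (not $\|u\|_{L^{2}}$) is needed in the low-frequency piece $III_{q}^{2}$, and that quantity \emph{is} bounded by $\|G\|_{L^{2}}+\|\theta_{0}\|_{L^{2}}$.
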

\begin{proof}
(1)
From Bony's decomposition we have
\begin{align*}
[\Delta_{q},u\cdot \nabla]f = & \sum_{|j-q|\leq 4} [\Delta_{q},S_{q-1}u\cdot\nabla]\Delta_{j}f
+ \sum_{|j-q|\leq 4} [\Delta_{q},\Delta_{j}u\cdot \nabla]S_{j-1}f \\
& + \sum_{j \geq q-3, 1\leq i \leq n} [\Delta_{q}\partial_{i},\Delta_{j}u^{i}]\widetilde{\Delta}_{j}f \\
= & I_{q} + II_{q} + III_{q}.
\end{align*}
Estimation of $I_{q}$.
Since $\Delta_{q}:=h_{q}(\cdot)* = 2^{qd}h(2^{q}\cdot)*$ with $h \in \mathcal{S}(\mathbb{R}^{d})$, then from Lemma \ref{commutator}
we get for every $\alpha < 1$
\begin{align*}
\|I_{q}\|_{L^{p}} \lesssim & \sum_{|j-q|\leq 4}\||x|^{1-\sigma-\epsilon}\phi_{q}\|_{L^{1}}\|\Lambda^{1-\sigma-\epsilon}S_{j-1}u\|_{L^{p}}2^{j}\|\Delta_{j}f\|_{L^{\infty}} \\
\lesssim & \|f\|_{B_{\infty,\infty}^{0}}\||x|^{1-\sigma-\epsilon}\phi\|_{L^{1}}
\sum_{|j-q|\leq 4}2^{(j-q)(1-\sigma-\epsilon)}2^{j(1+\epsilon+\sigma-\alpha)}  \\
& \times \sum_{k\leq j-2}2^{(j-k)(\alpha -1)}2^{k(\alpha-1)}
\|\Delta_{k}\Lambda^{1-\sigma-\epsilon}u\|_{L^{p}} \\
\lesssim & 2^{q(1+\epsilon+\sigma-\alpha)}\|\Lambda^{1-\sigma-\epsilon}u\|_{B_{p,\infty}}^{\alpha-1} \|f\|_{B_{\infty,\infty}^{0}},
\end{align*}
thus we have
\begin{align*}
\sup_{q\geq -1} 2^{q(\alpha -1 -\epsilon-\sigma)}\|I_{q}\|_{L^{p}}\lesssim \|\Lambda^{1-\sigma-\epsilon}u\|_{B_{p,\infty}^{\alpha -1}}
\|f\|_{B_{\infty,\infty}^{0}}.
\end{align*}
Estimation of $II_{q}$. Similar to the estimation of $I_{q}$, we can obtain
\begin{align*}
\|II_{q}\|_{L^{p}} \lesssim & \sum_{|j-q|\leq 4}\||x|^{1-\sigma-\epsilon}\phi_{q}\|_{L^{1}}\|\Lambda^{1-\sigma-\epsilon}\Delta_{j}u\|_{L^{p}}\|S_{j-1}\nabla f\|_{L^{\infty}} \\
\lesssim & \|\Lambda^{1-\sigma -\epsilon} u\|_{B_{p,\infty}^{\alpha-1}} \sum_{|j-q|\leq 4}\||x|^{1-\sigma-\epsilon}\phi_{q}\|_{L^{1}}
2^{j(1-\alpha)} \|S_{j-1}\nabla f\|_{L^{\infty}} \\
\lesssim & \|\Lambda^{1-\sigma-\epsilon}u\|_{B_{p,\infty}^{\alpha -1}}
\sum_{|j-q|\leq 4}2^{-q(1-\sigma -\epsilon)}2^{j(1-\alpha)}\sum_{k\leq j-2}2^{k}\|\Delta_{k}f\|_{L^{\infty}} \\
\lesssim & 2^{q(1-\alpha+\sigma+\epsilon)}\|\Lambda^{1-\sigma-\epsilon}u\|_{B_{p,\infty}^{\alpha-1}}\|f\|_{B_{\infty,\infty}^{0}},
\end{align*}
thus
\begin{align*}
\sup_{q \geq -1}2^{q(\alpha -1 -\sigma-\epsilon)} \|II_{q}\|_{L^{p}} \lesssim \|\Lambda^{1-\sigma-\epsilon}u\|_{B_{p,\infty}^{\alpha-1}} \|f\|_{B_{\infty,\infty}^{0}}.
\end{align*}
Estimation of $III_{q}$. We further write
\begin{align*}
III_{q} & = \sum_{j\geq q-3,j\in \mathbb{N},1\leq i\leq d} [\Delta_{q}\partial_{i},\Delta_{j}u^{i}]\widetilde{\Delta}_{j}f
+ \sum_{1\leq i\leq d}[\Delta_{q}\partial_{i},\Delta_{-1}u^{i}]\widetilde{\Delta}_{-1}f \\
& = III_{q}^{1} + III_{q}^{2}.
\end{align*}
For the first term, we get for every $\alpha > 0$
\begin{align*}
\|III_{q}^{1}\|_{L^{p}} \leq & \sum_{j\geq q-3,j\in \mathbb{N},1\leq i\leq d}
\|\partial_{i}\Delta_{q}(\Delta_{j}u^{i})\widetilde{\Delta}_{j}f\|_{L^{p}}
+ \sum_{j\geq q-3,j\in \mathbb{N},1\leq i\leq d}\|\Delta_{j}u^{i}\partial_{i}\Delta_{q}\widetilde{\Delta}_{j}f\|_{L^{p}} \\
\lesssim & 2^{q(1+\epsilon +\sigma -\alpha)}\sum_{j\geq q-3,j\in \mathbb{N}}2^{(q-j)(\alpha -\epsilon -\sigma)}
2^{j(\alpha-1)}\|\Delta_{j}\Lambda^{1-\sigma-\epsilon}u\|_{L^{p}}\|\widetilde{\Delta}_{j}f\|_{L^{\infty}} \\
\lesssim & 2^{q(1-\alpha+\sigma+\epsilon)} \|\Lambda^{1-\sigma-\epsilon}u\|_{B_{p,\infty}^{\alpha-1}}\|f\|_{B_{\infty,\infty}^{0}},
\end{align*}
thus
\begin{align*}
\sup_{q\geq -1}2^{q(\alpha -1-\epsilon-\sigma)}\|III_{q}^{1}\|_{L^{p}}\lesssim \|\Lambda^{1-\sigma-\epsilon}u\|_{B_{p,\infty}^{\alpha-1}}\|f\|_{B_{\infty,\infty}^{0}}.
\end{align*}
For the second term, due to $III_{q}^{2} = 0$ for every $q\geq 3$, we get for $p\geq 2$
\begin{align*}
& \sup_{q \geq -1}2^{q(\alpha-1-\epsilon-\sigma)}\|III_{q}^{2}\|_{L^{p}} \\
= & \sup_{q\geq -1}2^{q(\alpha-1-\epsilon-\sigma)}\|[\Delta_{q}\partial,\Delta_{-1}u]\widetilde{\Delta}_{-1}f\|_{L^{p}} \\
\lesssim & \|u\|_{L^{2}}\|f\|_{B_{\infty,\infty}^{0}}.
\end{align*}
Hence, the proof is complete.

(2)
Using Bony's decomposition, we obtain same formula as in the proof of (1). Estimation of $I_{q}$.
Similar to the proof of (1), we have
\begin{align*}
\|I_{q}\|_{L^{p}} \lesssim & \|f\|_{B_{\infty,\infty}^{0}} \sum_{|j-q|\leq 4}2^{(j-q)(1-\sigma-\epsilon)}2^{j(1+\epsilon+\sigma-\alpha)}   \\
& \times \sum_{k\leq j-2}2^{(j-k)(\alpha-1)}2^{k(\alpha-1)}
\left( \|\Delta_{k}G\|_{L^{p}} + \|\Delta_{k}\mathcal{R}_{\alpha}\theta\|_{L^{p}} \right) \\
\lesssim & 2^{q(1+\epsilon+\sigma-\alpha)}\|G\|_{B_{p,\infty}^{\alpha-1}}\|f\|_{B_{\infty,\infty}^{0}} \\
& + \|f\|_{B_{\infty,\infty}^{0}}\sum_{|j-q|\leq 4} 2^{(j-q)(1-\sigma -\epsilon)}2^{j(1+\epsilon+\sigma-\alpha)}
\sum_{k\leq j-2}2^{(j-k)(\alpha-1)}\|\theta\|_{L^{p}} \\
\lesssim & 2^{q(1+\epsilon+\sigma-\alpha)} \|f\|_{B_{\infty,\infty}^{0}} \left( \|G\|_{B_{p,\infty}^{\alpha-1}} + \|\theta\|_{L^{p}} \right),
\end{align*}
thus
\begin{align*}
\sup_{q\geq -1} 2^{q(\alpha-1-\epsilon-\sigma)} \|I_{q}\|_{L^{p}} \lesssim \left( \|G\|_{B_{p,\infty}^{\alpha-1}} + \|\theta\|_{L^{p}} \right) \|f\|_{B_{\infty,\infty}^{0}}.
\end{align*}
For $II_{q}$, we have
\begin{align*}
\|II_{q}\|_{L^{p}} \lesssim & \sum_{|j-q|\leq 4}\||x|^{1-\sigma-\epsilon}\phi_{q}\|_{L^{1}}
\|\Delta_{j}\Lambda^{1-\sigma-\epsilon}u\|_{L^{p}}\|S_{j-1}\nabla f\|_{L^{\infty}} \\
\lesssim & \sum_{|j-q|\leq 4}2^{-q(1-\sigma-\epsilon)}\|\Delta_{j}\Lambda^{1-\sigma-\epsilon}u\|_{L^{p}}\|S_{j-1}\nabla f\|_{L^{\infty}} \\
\lesssim & \sum_{|j-q|\leq 4}2^{-q(1-\sigma-\epsilon)}\left(\|\Delta_{j}G\|_{L^{p}}+\|\Delta_{j}\mathcal{R}_{\alpha}\theta\|_{L^{p}}\right)\sum_{k\leq j-2}2^{k}\|\Delta_{k}f\|_{L^{\infty}} \\
\lesssim & \sum_{|j-q|\leq 4}2^{-q(1-\sigma-\epsilon)}2^{j(1-\alpha)}\sum_{k\leq j-2}2^{k}\|\Delta_{j}f\|_{L^{\infty}}
\left( \|G\|_{B_{p,\infty}^{\alpha-1}} + \|\theta\|_{L^{p}} \right) \\
\lesssim & 2^{q(1-\alpha+\epsilon+\sigma)}\left( \|G\|_{B_{p,\infty}^{\alpha-1}} + \|\theta\|_{L^{p}} \right)\|f\|_{B_{\infty,\infty}^{0}},
\end{align*}
thus
\begin{align*}
\sup_{q\geq -1} 2^{q(\alpha-1-\epsilon-\sigma)} \|I_{q}\|_{L^{p}} \lesssim \left( \|G\|_{B_{p,\infty}^{\alpha-1}} + \|\theta\|_{L^{p}} \right) \|f\|_{B_{\infty,\infty}^{0}}.
\end{align*}
For the term $III_{q}^{1}$. We can calculus as follows.
\begin{align*}
\|III_{q}^{1}\|_{L^{p}} \lesssim & 2^{q(1-\alpha+\sigma+\epsilon)} \sum_{j\geq q-3, j\in \mathbb{N}} 2^{(q-j)(\alpha-\sigma-\epsilon)}
2^{j(\alpha-1)}\|\Delta_{j}\Lambda^{1-\sigma-\epsilon}u\|_{L^{p}}\|\widetilde{\Delta}_{j}f\|_{L^{\infty}} \\
\lesssim & 2^{q(1-\alpha+\sigma+\epsilon)}\sum_{j\geq q-3,j\in \mathbb{N}}2^{(q-j)(\alpha-\epsilon-\sigma)}2^{j(\alpha-1)}
\|\Delta_{j}G\|_{L^{p}}\|\widetilde{\Delta}_{j}f\|_{L^{\infty}} \\
& + 2^{q(1-\alpha+\sigma+\epsilon)}\sum_{j\geq q-3,j\in \mathbb{N}} 2^{(q-j)(\alpha-\sigma-\epsilon)}2^{j(\alpha-1)} \|\Delta_{j}\mathcal{R}_{\alpha}\theta\|_{L^{p}}\|\widetilde{\Delta}_{j}f\|_{L^{\infty}}    \\
\lesssim & 2^{q(1-\alpha+\sigma+\epsilon)}\left( \|G\|_{B_{p,\infty}^{\alpha-1}} + \|\theta\|_{L^{p}} \right) \|f\|_{B_{\infty,\infty}^{0}}.
\end{align*}
For the term $III_{q}^{2}$. For every $q\geq 3$ we know that $III_{q}^{2} = 0$. So for $p \geq 2$, we have
\begin{align*}
& \sup_{q \geq -1}2^{q(\alpha-1-\epsilon-\sigma)}\|III_{q}^{2}\|_{L^{p}}  \\
= & \sup_{q \geq -1} 2^{q(\alpha-1-\sigma-\epsilon)}\|[\Delta_{q}\partial,\Delta_{-1}u]\widetilde{\Delta}_{-1}f\|_{L^{p}}   \\
\lesssim & \|\Delta_{-1}u\|_{L^{p}}\|f\|_{B_{\infty,\infty}^{0}}    \\
\lesssim & \|\Delta_{-1}\Lambda^{\sigma+\epsilon-1}u\|_{L^{p}}\|f\|_{B_{\infty,\infty}^{0}} \\
\lesssim & \left( \|\Delta_{-1}G\|_{L^{p}} + \|\Delta_{-1}\mathcal{R}_{\alpha}\theta\|_{L^{p}} \right) \|f\|_{B_{\infty,\infty}^{0}}    \\
\lesssim & \left( \|G\|_{L^{2}} + \|\theta_{0}\|_{L^{2}} \right) \|f\|_{B_{\infty,\infty}^{0}}.
\end{align*}
From all the above statements, we can obtain our results.
\end{proof}


\begin{thebibliography}{1}

\bibitem{T2BEWLSV}Dongho Chae, Jiahong Wu, The 2D Boussinesq equations with logarithmically supercritical velocities,
Advances in Mathematics, 230(2012) 1618-1645.
\bibitem{GFD}Pedloshy, J, Geophysical Fluid Dynamics, Springer, New York (1987).
\bibitem{GRFT2BEWPVT}Dongho Chae, Global regularity for the 2D Boussinesq equations with partial viscosity terms,
Advances in Mathematics, 203(2006)497-513.
\bibitem{GWPFABNSSWCD}T. Hmidi, S. Keraani, F. Rousset, Global well-posedness for a Boussinesq-Navier-Stokes system
with critical dissipation, Journal of Differential Equations, 249(2010)2147-2174.
\bibitem{GWPFEBSWCD}T. Hmidi, S. Keraani, F. Rousset, Global well-posedness for Euler-Boussinesq system with
critical dissipation, Communications in Partial Differential Equations, 36(2011)420-445.
\bibitem{OTGWPOACOBNSS}C. Miao, L. Xue, On the global well-posedness of a class of Boussinesq-Navier-Stokes systems,
Nonlinear Differential Equations and Applications NoDEA, 18(2011)707-735.
\bibitem{GWPFT2BSWTTDVATD}C. Wang, Z. Zhang, Global well-posedness for the 2D Boussinesq system with the temperature-dependent
viscosity and thermal diffusivity, Advances in Mathematics, 228(2011)43-62.
\bibitem{GWPFTEBSWAD}T. Hmidi, F. Rousset, Global well-posedness for the Euler-Boussinesq system with axisymmetric data,
Journal of Functional Analysis, 260(2011)745-796.
\bibitem{IBVPFTDVBE}M. Lai, R. Pan, K. Zhao, Initial boundary value problem for two-dimensional viscous Boussinesq equations,
Arch.Rational Mech.Anal. 199(2011)739-760.
\bibitem{GWPFT2IBSWFDAYTD}G. Wu, L. Xue, Global well-posedness for the 2D inviscid B$\acute{e}$nard system with fractional
diffusivity and Yudovich's type data, Journal of Differential Equations, 253(2012)100-125
\bibitem{FANPDE}Bahouri,H., Chemin,J.-Y., Danchin,R. Fourier Analysis and Nonlinear Partial Differential Equations,Grundlehren Math. Wiss.,vol 343, Springer, (2011).
\bibitem{PIF}Chemin,J.-Y. Perfect Incompressible Fluids, Clarendon Press, Oxford, (1998).
\bibitem{T.Hmidi2010JDE}Hmidi,T., Keraani,S., Rousset,F. Global well-posedness for a
Boussinesq-Navier-Stokes system with critical dissipation, J.Diff.Eqs. 249(2010)2147-2174.
\bibitem{AMPATTQGW}A. C$\acute{o}$rdoba, D. C$\acute{o}$rdoba, A maximum principle applied to quasi-geostrophic equations,
Commun. Math. Phys, 249(2004)511-528.
\bibitem{OTGWPOTTDBSWAZV}T. Hmidi, S. Keraani, On the global well-posedness of the two-dimensional Boussinesq system with a zero viscosity,
Indiana Univ.Math.J. 58(4)1591-1618.
\bibitem{GWPOTCBEICBS}C. Miao, G. Wu, Global well-posedness of the critical Burgers equation in critical Besov spaces,
J.Differ.Equ. 247(2009)1673-1693.
\bibitem{TMPATGAFTDQE} N. Ju, The maximum principle and the global attractor for the dissipative 2D quasi-geostrophic
 equations. Commun. Math. Phys. 255(2005)161-181.
\bibitem{ANBIAT2DQGE}Q. Chen, C. Miao, Z. Zhang, A new Bernstein's inequality and the 2D dissipative quasi-geostrophic equation,
Commun. Math. Phys. 271(2007)821-838.

\end{thebibliography}
\end{document}